\newcommand{\globalcolor}[1]{%
  \color{#1}\global\let\default@color\current@color
}
\definecolor{blush}{rgb}{0.87, 0.36, 0.51}
	\definecolor{brightcerulean}{rgb}{0.11, 0.67, 0.84}
	\definecolor{greenryb}{rgb}{0.4, 0.69, 0.2}
\newif\ifdark
\definecolor{darkred}{rgb}{0.9,0.2,0.2}
\definecolor{darkblue}{rgb}{0.7,0.3,1}
\definecolor{darkgreen}{rgb}{0.1,0.9,0.1}
\definecolor{franck}{rgb}{0,0.8,1}
\definecolor{pagebackground}{rgb}{.15,.21,.18}
\definecolor{pageforeground}{rgb}{.84,.84,.85}
\definecolor{symbols}{rgb}{0,0.7,1}
\colorlet{connection}{red!80!black}
\colorlet{boxcolor}{blue!50}
\definecolor{darkred}{rgb}{0.7,0.1,0.1}
\definecolor{darkblue}{rgb}{0.4,0.1,0.8}
\definecolor{darkgreen}{rgb}{0.1,0.7,0.1}
\definecolor{franck}{rgb}{0,0,1}
\definecolor{pagebackground}{rgb}{1,1,1}
\definecolor{pageforeground}{rgb}{0,0,0}
\colorlet{symbols}{blue!90!black}
\colorlet{connection}{red!30!black}
\colorlet{boxcolor}{blue!50!black}
\def\slash{\leavevmode\unskip\kern0.18em/\penalty\exhyphenpenalty\kern0.18em}
\def\dash{\leavevmode\unskip\kern0.18em--\penalty\exhyphenpenalty\kern0.18em}
\DeclareMathAlphabet{\mathbbm}{U}{bbm}{m}{n}
\DeclareFontFamily{U}{BOONDOX-calo}{\skewchar\font=45 }
\DeclareFontShape{U}{BOONDOX-calo}{m}{n}{
  <-> s*[1.05] BOONDOX-r-calo}{}
\DeclareFontShape{U}{BOONDOX-calo}{b}{n}{
  <-> s*[1.05] BOONDOX-b-calo}{}
\DeclareMathAlphabet{\mcb}{U}{BOONDOX-calo}{m}{n}
\SetMathAlphabet{\mcb}{bold}{U}{BOONDOX-calo}{b}{n}
\setlist{noitemsep,topsep=4pt,leftmargin=1.5em}
\DeclareMathAlphabet{\mathbbm}{U}{bbm}{m}{n}
\DeclareMathAlphabet{\mcb}{U}{BOONDOX-calo}{m}{n}
\SetMathAlphabet{\mcb}{bold}{U}{BOONDOX-calo}{b}{n}
\DeclareFontFamily{U}{mathx}{\hyphenchar\font45}
\DeclareFontShape{U}{mathx}{m}{n}{
      <5> <6> <7> <8> <9> <10>
      <10.95> <12> <14.4> <17.28> <20.74> <24.88>
      mathx10
      }{}
\DeclareSymbolFont{mathx}{U}{mathx}{m}{n}
\DeclareMathSymbol{\bigtimes}{1}{mathx}{"91}
\def\s{\mathfrak{s}}
\providecommand{\figures}{false}
{ \ifthenelse{\equal{\figures}{false}} {#1}{\[ {\rm Figure \ missing !} \]} }{}
\def\id{\mathrm{id}}
\def\CH{\mathcal{H}}
\def\CP{\mathcal{P}}
\def\CG{\mathcal{G}}
\def\CJ{\mathcal{J}}
\def\CC{\mathcal{C}}
\def\CB{\mathcal{B}}
\def\CT{\mathcal{T}}
\tikzstyle{tinydots}=[dash pattern=on \pgflinewidth off \pgflinewidth]
\tikzstyle{superdense}=[dash pattern=on 4pt off 1pt]
\newcommand{\BR}{\bf{BRP}}
\newcommand{\AN}{\bf{ARP}}
\newcommand{\beq}{\begin{equation}}
\newcommand{\eeq}{\end{equation}}
\def\Labe{\mathfrak{e}}
\def\Labn{\mathfrak{n}}
\def\${|\!|\!|}
\newenvironment{DIFnomarkup}{}{} 
 \newcommand{\wh}{\widehat}
\newcommand{\rrightarrow}{{\to\hskip -4.9mm\raise 1pt\hbox{$\to$}}}
\newfont{\indic}{bbmss12}
\def\Nabla_#1{\nabla_{\!#1}}
    \pgfmathsetlength{\pgf@xb}{\pgfkeysvalueof{/pgf/outer xsep}}%
    \pgfmathsetlength{\pgf@yb}{\pgfkeysvalueof{/pgf/outer ysep}}%
\def\symbol#1{\textcolor{symbols}{#1}}
\def\decorate#1#2{
        \ifnum#2>0
    		\foreach \count in {1,...,#2}{
	       	let
				\p1 = (sourcenode.center),
                \p2 = (sourcenode.east),
				\n1 = {\x2-\x1},
				\n2 = {1mm},
				\n3 = {(1.3+0.6*(\count-1))*\n1},
				\n4 = {0.7*\n1}
			in 
        		node[rectangle,fill=symbols,rotate=30,inner sep=0pt,minimum width=0.2*\n2,minimum height=\n2] at ($(sourcenode.center) + (\n3,\n4)$) {}
				}
		\fi
        \ifnum#1>0
    		\foreach \count in {1,...,#1}{
	       	let
				\p1 = (sourcenode.center),
                \p2 = (sourcenode.east),
				\n1 = {\x2-\x1},
				\n2 = {1mm},
				\n3 = {(1.3+0.6*(\count-1))*\n1},
				\n4 = {0.7*\n1}
			in 
        		node[rectangle,fill=symbols,rotate=-30,inner sep=0pt,minimum width=0.2*\n2,minimum height=\n2] at ($(sourcenode.center) + (-\n3,\n4)$) {}
				}
		\fi
}
\tikzset{
    dectriangle/.style 2 args={
        triangle,
        alias=sourcenode,
        append after command={\decorate{#1}{#2}}
    },
    dectriangle/.default={0}{0},
}
\tikzset{
	cross/.style={path picture={ 
  		\draw[symbols]
			(path picture bounding box.south east) -- (path picture bounding box.north west) (path picture bounding box.south west) -- (path picture bounding box.north east);
		}},
root/.style={circle,fill=green!50!black,inner sep=0pt, minimum size=1.2mm},
        dot/.style={circle,fill=pageforeground,inner sep=0pt, minimum size=1mm},
        dotred/.style={circle,fill=pageforeground!50!pagebackground,inner sep=0pt, minimum size=2mm},
        var/.style={circle,fill=pageforeground!10!pagebackground,draw=pageforeground,inner sep=0pt, minimum size=3mm},
        kernel/.style={semithick,shorten >=2pt,shorten <=2pt},
        kernels/.style={snake=zigzag,shorten >=2pt,shorten <=2pt,segment amplitude=1pt,segment length=4pt,line before snake=2pt,line after snake=5pt,},
        rho/.style={densely dashed,semithick,shorten >=2pt,shorten <=2pt},
           testfcn/.style={dotted,semithick,shorten >=2pt,shorten <=2pt},
        renorm/.style={shape=circle,fill=pagebackground,inner sep=1pt},
        labl/.style={shape=rectangle,fill=pagebackground,inner sep=1pt},
        xic/.style={very thin,circle,draw=symbols,fill=symbols,inner sep=0pt,minimum size=1.2mm},
        g/.style={very thin,rectangle,draw=symbols,fill=symbols!10!pagebackground,inner sep=0pt,minimum width=2.5mm,minimum height=1.2mm},
        xi/.style={very thin,circle,draw=symbols,fill=symbols!10!pagebackground,inner sep=0pt,minimum size=1.2mm},
	xies/.style={very thin,rectangle,fill=green!50!black!25,draw=symbols,inner sep=0pt,minimum size=1.1mm},
	xiesf/.style={very thin,rectangle,fill=green!50!black,draw=symbols,inner sep=0pt,minimum size=1.1mm},
        xix/.style={very thin,crosscircle,fill=symbols!10!pagebackground,draw=symbols,inner sep=0pt,minimum size=1.2mm},
        X/.style={very thin,cross,rectangle,fill=pagebackground,draw=symbols,inner sep=0pt,minimum size=1.2mm},
	xib/.style={thin,circle,fill=symbols!10!pagebackground,draw=symbols,inner sep=0pt,minimum size=1.6mm},
	xie/.style={thin,circle,fill=green!50!black,draw=symbols,inner sep=0pt,minimum size=1.6mm},
	xid/.style={thin,circle,fill=symbols,draw=symbols,inner sep=0pt,minimum size=1.6mm},
	xibx/.style={thin,crosscircle,fill=symbols!10!pagebackground,draw=symbols,inner sep=0pt,minimum size=1.6mm},
	kernels2/.style={very thick,draw=connection,segment length=12pt},
	keps/.style={thin,draw=symbols,->},
	kepspr/.style={thick,draw=connection,->},
	krho/.style={thin,draw=symbols,superdense,->},
	krhopr/.style={thick,draw=connection,superdense},
	triangle/.style = { regular polygon, regular polygon sides=3},
	not/.style={thin,circle,draw=connection,fill=connection,inner sep=0pt,minimum size=0.5mm},
	diff/.style = {very thin,draw=symbols,triangle,fill=red!50!black,inner sep=0pt,minimum size=1.6mm},
	diff1/.style = {very thin,dectriangle={1}{0},fill=red!50!black,draw=symbols,inner sep=0pt,minimum size=1.6mm},
	diff2/.style = {very thin,dectriangle={1}{1},fill=red!50!black,draw=symbols,inner sep=0pt,minimum size=1.6mm},
		diffmini/.style = {very thin,rectangle,fill=black,draw=black,inner sep=0pt,minimum size=0.75mm},
	 kernelsmod/.style={very thick,draw=connection,segment length=12pt},
	 rec/.style = {very thin,rectangle,fill=black,draw=black,inner sep=0pt,minimum size=2mm},
	cerc/.style={very thin,circle,draw=black,fill=symbols,inner sep=0pt,minimum size=2mm},
	stars/.style={very thin,star,star points=6,star point ratio=0.5, draw=black,fill=red,inner sep=0pt,minimum size=0.7mm},
	>=stealth,
        }
        \tikzset{
root/.style={circle,fill=black!50,inner sep=0pt, minimum size=3mm},
        circ/.style={circle,fill=white,draw=black,very thin,inner sep=.5pt, minimum size=1.2mm},
        round1/.style={fill=white,outer sep = 0,inner sep=2pt,rounded corners=1mm,draw,text=black,thin,minimum size=1.2mm},
          circ1/.style={circle,fill=red!10,draw=red,very thin,inner sep=.5pt, minimum size=1.2mm},
        rect/.style={fill=white,outer sep = 0,inner sep=2pt,rectangle,draw,text=black,thin,minimum size=1.2mm},
        rect1/.style={fill=white,outer sep = 0,inner sep=2pt,rectangle,draw,text=black,thin,minimum size=1.2mm},
        round2/.style={fill=red!10,outer sep = 0,inner sep=2pt,rounded corners=1mm,draw,text=black,thin,minimum size=1.2mm},
       round3/.style={fill=blue!10,outer sep = 0,inner sep=2pt,rounded corners=1mm,draw,text=black,thin,minimum size=1.2mm}, 
        rect2/.style={fill=black!10,outer sep = 0,inner sep=2pt,rectangle,draw,text=black,thin,minimum size=1.2mm},
        dot/.style={circle,fill=black,inner sep=0pt, minimum size=1.2mm},
        dotred/.style={circle,fill=black!50,inner sep=0pt, minimum size=2mm},
        var/.style={circle,fill=black!10,draw=black,inner sep=0pt, minimum size=3mm},
        kernel/.style={semithick,shorten >=2pt,shorten <=2pt},
         diag/.style={thin,shorten >=4pt,shorten <=4pt},
        kernel1/.style={thick},
        kernels/.style={snake=zigzag,shorten >=2pt,shorten <=2pt,segment amplitude=1pt,segment length=4pt,line before snake=2pt,line after snake=5pt,},
		kernels1/.style={snake=zigzag,segment amplitude=0.5pt,segment length=2pt},
		rho1/.style={densely dotted,semithick},
        rho/.style={densely dashed,semithick,shorten >=2pt,shorten <=2pt},
           testfcn/.style={dotted,semithick,shorten >=2pt,shorten <=2pt},
           visible/.style={draw, circle, fill, inner sep=0.25ex},
        renorm/.style={shape=circle,fill=white,inner sep=1pt},
        labl/.style={shape=rectangle,fill=white,inner sep=1pt},
        xic/.style={very thin,circle,fill=symbols,draw=black,inner sep=0pt,minimum size=1.2mm},
        xi/.style={very thin,circle,fill=blue!10,draw=black,inner sep=0pt,minimum size=1.2mm},
	xib/.style={very thin,circle,fill=blue!10,draw=black,inner sep=0pt,minimum size=1.6mm},
	xie/.style={very thin,circle,fill=green!50!black,draw=black,inner sep=0pt,minimum size=1mm},
	xid/.style={very thin,circle,fill=symbols,draw=black,inner sep=0pt,minimum size=1.6mm},
	edgetype/.style={very thin,circle,draw=black,inner sep=0pt,minimum size=5mm},
	nodetype/.style={very thick,circle,draw=black,inner sep=0pt,minimum size=5mm},
	kernels2/.style={very thick,draw=connection,segment length=12pt},
clean/.style={thin,circle,fill=black,inner sep=0pt,minimum size=1mm},	not/.style={thin,circle,fill=symbols,draw=connection,fill=connection,inner sep=0pt,minimum size=0.8mm},
	>=stealth,
        }
\def\DeclareSymbol#1#2#3{%
	\expandafter\gdef\csname MH@symb@#1\endcsname{\tikzsetnextfilename{symbol#1}%
	\tikz[baseline=#2,scale=0.15,draw=symbols,line join=round]{#3}}%
	\expandafter\gdef\csname MH@symb@#1s\endcsname{\scalebox{0.75}{\tikzsetnextfilename{symbol#1}%
	\tikz[baseline=#2,scale=0.15,draw=symbols,line join=round]{#3}}}%
	\expandafter\gdef\csname MH@symb@#1ss\endcsname{\scalebox{0.65}{\tikzsetnextfilename{symbol#1}%
	\tikz[baseline=#2,scale=0.15,draw=symbols,line join=round]{#3}}}%
	}
\def\<#1>{\ifthenelse{\boolean{mmode}}{\mathchoice{\csname MH@symb@#1\endcsname}{\csname MH@symb@#1\endcsname}{\csname MH@symb@#1s\endcsname}{\csname MH@symb@#1ss\endcsname}}{\csname MH@symb@#1\endcsname}}
 \def\1{\mathbf{\symbol{1}}}
\def\one{\mathbf{1}}
\DeclareMathAlphabet{\mathpzc}{OT1}{pzc}{m}{it}
\def\eqref#1{(\ref{#1})}
\newcommand*{\bigcdot}{}
\DeclareRobustCommand*{\bigcdot}{%
  \mathbin{\mathpalette\bigcdot@{}}%
}
\newcommand*{\bigcdot@scalefactor}{.5}
\newcommand*{\bigcdot@widthfactor}{1.15}
\newcommand*{\bigcdot@}[2]{%
  \sbox0{$#1\vcenter{}$}
  \sbox2{$#1\cdot\m@th$}%
  \hbox to \bigcdot@widthfactor\wd2{%
    \hfil
    \raise\ht0\hbox{%
      \scalebox{\bigcdot@scalefactor}{%
        \lower\ht0\hbox{$#1\bullet\m@th$}%
      }%
    }%
    \hfil
  }%
}
\newcommand{\catname}[1]{\mathbf{#1}} 
\def\two{{\<generic>\kern0.05em\<genericb>}}
\def\twoI{{\<Ito>\kern0.05em\<Itob>}}
\def\mail#1{\burlalt{#1}{mailto:#1}}
\declaretheorem[style=definition]{example}
\begin{document}

\title{Geometric embedding for regularity structures}
\author{Yvain Bruned$^1$, Foivos Katsetsiadis}
\institute{ 
 IECL (UMR 7502), Université de Lorraine
  \\
Email:\ \begin{minipage}[t]{\linewidth}
\mail{yvain.bruned@univ-lorraine.fr},
\\ \mail{fivosk7@gmail.com}.
\end{minipage}}

\maketitle

\begin{abstract}
In this paper, we show how one can view certain models in regularity structures as some form of geometric rough paths. This is performed by identifying the deformed Butcher-Connes-Kreimer Hopf algebra with a quotient of the shuffle Hopf algebra which is the structure underlying the definition of a geometric rough path. This provides an extension of the isomorphism between the Butcher-Connes-Kreimer Hopf algebra and the shuffle Hopf algebra. This new algebraic result relies strongly on the deformation formalism and the post-Lie structures introduced recently in the context of regularity structures.  
\end{abstract}

\setcounter{tocdepth}{1}
\tableofcontents

\section{Introduction}

\ \ \ \ \ In this work we attempt to construct a correspondence between models in regularity structures \cite{reg} and geometric rough paths from classical rough path theory \cite{Lyons98,Gubinelli2004}. Results of this kind have already been obtained in the case of branched rough paths which are another type of rough paths defined on trees (see \cite{Gub06}) instead of words. An approach given in \cite{BC} constructs a bijection between the two spaces $\BR^{\gamma}$ of branched rough paths and $\AN^{\gamma}$ of anisotropic rough paths. The main idea is to use an algebraic result from \cite{Foi02,cha10} that directly relates the underlying Hopf algebras. Inspired by this approach, we endeavour to show that certain Hopf algebras appearing in the context of regularity structures also relate to Hopf algebras with simpler presentation such as quotients of the tensor Hopf algebra $(T(\CB), \otimes, \Delta_{\shuffle})$ that appears in the context of geometric rough paths. 
Trying to find other combinatorial structures than decorated trees introduced in \cite{reg,BHZ} for SPDEs  has been developed in the recent multi-index formalism in \cite{LOT,OSSW}. The main difference with our approach, is that in their context one can hope at the best for a post-Lie morphism between decorated trees and multi-indices. Whereas, we obtain in this work an isomorphism. This duality between trees and words for coding expansions has been considered in numerical analysis (see \cite{Murua2006,Murua2017}). We also expect this work to have an impact in the context of low regularity integrators in \cite{BS} for dispersive PDEs where similar decorated trees are used.
 
Our approach further relies on an indispensable algebraic tool, which is the notion of a post-Lie algebra. In \cite{BM22}, it has been shown that certain Hopf algebras appearing in the context of regularity structures can be built directly from certain pre-Lie algebraic structures -a special case of post-Lie algebras- that are simpler to describe. This is accomplished via means of a recursive construction of the product by Guin and Oudom, first appearing in \cite{Guin1,Guin2}. This fact can reveal important information about the Hopf algebras involved. Given a pre-Lie algebra $(E, \curvearrowright)$ the Guin-Oudom procedure constructs a product on the symmetric space over the underlying vector space $E$. It also endows the space with the shuffle coproduct $\Delta_{\shuffle}$ thus turning it into a Hopf algebra, which is isomorphic to the universal enveloping algebra of the commutator Lie algebra $E_{Lie}$ associated to $E$. 

 It was already known that the graded dual of the Butcher-Connes-Kreimer Hopf algebra $\CH_{\text{\tiny{BCK}}}$ \cite{Butcher,CK1,CK2}, which is the Grossman-Larson Hopf algebra $\CH_{\text{\tiny{GL}}}$ \cite{GL}, can be generated in this manner by the free pre-Lie algebra over a set of generators which can be described as the linear span of trees endowed with the grafting product. In the work of \cite{BM22} it is proven that the graded dual of a deformed version of the Butcher-Connes-Kreimer Hopf algebra is also generated in this manner by a deformed version of the grafting product. This deformed version of the grafting product is then shown to be isomorphic to the original grafting product in the category of pre-Lie algebras via an isomorphism $\Theta$. This is illustrated below via the following diagram:

 \begin{equs}\label{diag_1}
 \begin{aligned}
\xymatrix{\curvearrowright
  \ar[rr]^{\scriptsize{\text{Guin-Oudom}}}\ar[d]_{\Theta} &&  \star  \ar[d]^{\Phi}  && \ar[ll]^{\scriptsize{\text{Dual}}}  \Delta_{\text{\tiny{BCK}}} \\ \wh \curvearrowright
\ar[rr]^{\scriptsize{\text{Guin-Oudom}}} &&  \tilde{\star} && \ar[ll]^{\scriptsize{\text{Dual}}} \Delta_{\text{\tiny{DBCK}}}
}
\end{aligned}
\end{equs}
where $  \wh \curvearrowright $ is the deformed grafting obtained from $ \curvearrowright $ by $ \Theta $. The coproducts $\Delta_{\text{\tiny{BCK}}} $ and $\Delta_{\text{\tiny{DBCK}}} $ are respectly the Butcher-Connes-Kreimer  and the deformed Butcher-Connes-Kreimer coproducts. The products $ \star $ and $ \tilde{\star} $ are the Grossman-Larson and deformed Grossman-Larson products.
The isomorphism $ \Phi $ between these two products is obtained by applying the Guin-Oudom functor to $ \Theta $ (see Theorem~\ref{theorem_functor}). 
Furthermore, the work of \cite{BK} completes this programme, in the sense that post-Lie algebras that generate the graded duals of the  Hopf algebras $ \CH_2 $ used for the recentering in singular SPDEs (see \cite{reg,BHZ,BM22}). Again, for each Hopf algebra one has an original and deformed version and these are correspondingly proven to be generated by a post-Lie product or a deformed version thereof. This could be summarise in the following diagram:
\begin{equs}\label{diag_2}
 \begin{aligned}
\xymatrix{ \wh \curvearrowright^{\text{\tiny{Lie}}}
\ar[rr]^{\scriptsize{\text{Guin-Oudom}}} &&  \star_2 && \ar[ll]^{\scriptsize{\text{Dual}}} \Delta_{2}
}
\end{aligned}
\end{equs}
We have added the notation $ \text{\tiny{Lie}} $ to stress the fact that one starts with a Lie algebra and therefore the previous deformed gafting product $ \wh \curvearrowright $ is extended to new objects. The Guin-Oudom procedure used is the one for post-Lie algebras developed in \cite{ELM}. The map $ \Delta_2 $ is the coproduct for $ \CH_2 $.

The map $ \Psi $ allows to say that the deformed Butcher-Connes-Kreimer Hopf algebra is isomorphic to the tensor Hopf algebra (see Theorem~\ref{Main}). Indeed, the basis $ B $ given by the Chapoton-Foissy isomorphism $ \Psi_{\text{\tiny{CF}}} $ is transported via $ \Phi $ in the sense that one has:
\begin{equs}
\Psi_{\text{\tiny{CF}}} : \tau_1 \star \ldots \star \tau_r \mapsto
\tau_1 \otimes \ldots \otimes \tau_r, \quad \tau_i \in B.
\end{equs} 
Then, the new isomorphism $ \Psi_{\Phi}  $ is given by 
\begin{equs}
\Psi_{\Phi} : \Phi(\tau_1) \ \tilde{\star} \ldots \tilde{\star} \ \Phi(\tau_r) \mapsto
\Phi(\tau_1) \otimes \ldots \otimes \Phi(\tau_r)
\end{equs}
where $ \Phi(\tau_1) \, \otimes \ldots \otimes \, \Phi(\tau_n) \in T(\Phi(\CB)) $ and $ \CB $ is the linear span of $ B $.  This gives a clear description of the basis that can be used in the context of the deformed Butcher-Connes-Kreimer Hopf algebra. We also know that elements of $ \CB $ are linear combinations of planted trees that are primitives elements for the Butcher-Connes-Kreimer coproduct of the form $ \mathcal{I}_{a}(\tau) $. Here, in the notation $ \tau $ is a linear combination of decorated trees and $ \mathcal{I}_{a} $ correspond of the grafting of these trees onto a new root via an edge decorated by $ a $.  

This result does cover the Hopf algebras used in \cite{BS} but not the one at play in the context of regularity structures. Indeed, not only are planted trees used for describing solutions of singular SPDEs but so are classical monomials $ X_i $. In the expansion, these objects are associated to some operators that do not commute, motivating the introduction of a natural Lie bracket.
The grafting product has to be compatible with this underlying Lie bracket and that is encapsulated in the form of a post-Lie product recently introduced for SPDEs in \cite{BK}. Therefore, the Lie- algebraic structure has to be taken into account when one extends the isomorphism introduced by Chapoton and Foissy. In our main result (see Theorem~\ref{main_result_paper}) the alphabet $ A $ is given by the $ X_i $ and the $ \Phi(\mathcal{I}_a(\tau)) \in \Phi(\CB) $. We denote by $ W $ the words on this alphabet.  The space $ \tilde{W} $ is given as the quotient of $W$ by the Hopf ideal $ \CJ $ generated by the elements
\begin{equs}
  \{ X_i \otimes \Phi( \mathcal{I}_{a}(\tau) ) - \Phi( \mathcal{I}_{a}(\tau) ) \otimes X_i - \uparrow^{i} \Phi( \mathcal{I}_{a}(\tau))  - \Phi(\mathcal{I}_{a - e_i}(\tau)) \}
\end{equs}
where $ \mathcal{I}_{a}(\tau) \in B $ and $ \uparrow^{i} $ corresponds to changing a node decoration by adding $ e_i $ to it. The $ e_i $ are the canonical basis of $ \mathbb{N}^{d+1} $. Then, there exists a Hopf algebra isomorphism $ \Psi_{\Phi}  $ between decorated trees and $ \tilde{W} $.
 The map $ \Psi_{\Phi} $ is given as an extension of $ \Psi_{\Phi} $ by
\begin{equs}
 \Psi_{\Phi} : \prod_{i=1}^n \CI_{a_i}(\tau_i) X^k \rightarrow \Psi_{\Phi}(\prod_{i=1}^n \CI_{a_i}(\tau_i)) \otimes_{j=0}^{d} \otimes_{i=1}^{k_j} X_j.
\end{equs}
where $ k = (k_0,...,k_d) \in \mathbb{N}^{d+1} $, $ X^k = \prod_{j=0}^{d} X_j^{k_j} $ and $ \prod_{i=1}^n \CI_{a_i}(\tau_i) X^k $ corresponds to a certain order as two planted trees commute but not $ X_i $ and a planted tree which is encoded in the Lie bracket. The product $ \prod_{i=1}^n $ is commutative. 

Let us comment on the main algebraic result of this paper. We know from the Milnor-Moore theorem that a Hopf algebra is the universal enveloping algebra of its primitive elements, which is defined as a quotient of a tensor algebra. The difference is that the space $ W $ appearing here is much smaller than that of the primitive elements. It is the image by an isomorphism of a basis of primitive elements for the Butcher-Connes-Kreimer Hopf algebra. The other elements are the $ X_i $. The quotient is happening with a Lie bracket between the $ X_i $ and the planted trees which is way smaller than the one taken for the Milnor-Moore theorem. The main thing to check is to see that this Lie bracket preserves the basis of Chapoton and Foissy which is the subject of  Proposition~\ref{stable_ideal}.
The construction features a very general mechanism that can be reproduced in other contexts:
 \begin{itemize}
 	\item Deformation with the help of an isomorphism that transports the structure.
 	\item Post-Lie structures: one adds new elements $ X_i $ that do not commute with the previous space. They introduce a natural Lie algebra. This new Lie algebra is used in the quotient of $ W $.
 \end{itemize}
 We obtain a better understanding of the recentering Hopf algebra that can potentially provide new results in the theory of regularity structures and the analysis of (S)PDEs.

 The paper will be structured as follows: In Section~\ref{section::2} we give an outline of the results in \cite{BC}, where the authors construct a correspondence between geometric rough paths and branched rough paths. This is accomplished using a result of Chapoton and Foissy that shows that the Grossman-Larson Hopf algebra is in fact isomorphic to the tensor Hopf algebra.
As rough paths may be seen as parametrized families of characters of these Hopf algebras one can use composition with the Hopf algebra isomorphism to directly transverse across structures see Theorem~\ref{chiso}. In this section, we also present the generalised version of the classical Butcher-Connes-Kreimer Hopf algebra $ \CH_{\text{\tiny{BCK}}} $ that can accomodate decorations on the edges and vertices of the forests. We also present the generalized version of the Grossman-Larson Hopf algebra $ \CH_{\text{\tiny{GL}}}  $ which again comes with decorations on the edges and vertices and is dual to $ \CH_{\text{\tiny{BCK}}} $. We also present the Chapoton-Foissy isomorphism) in this context (see Theorem~\ref{Foissy_Chapoton_basis}) that we will use in the sequel. 
 
  In Section~\ref{section::3}, we present the grafting and deformed grafting pre-Lie products and explain how they generate the Grossman-Larson product (see Theorem~\ref{Free pre-Lie -> Grossman - Larson}) and a deformed version thereof (see Theorem~\ref{Deformed Grafting0}), via a contruction given by Guin and Oudom. We then present the isomorphism $\Theta$ appearing in \cite{BM22} between the grafting and deformed grafting pre-Lie algebras. We use this to prove that this deformed structure is isomorphic as a Hopf Algebra to the original Grossman-Larson Hopf algebra. Hence, by virtue of the result of Chapoton and Foissy, it is also isomorphic to the tensor Hopf algebra (see Theorem~\ref{Main}).

 In Section~\ref{section::4} we present the post-Lie algebraic formalism and the generalisation of the Guin-Oudom construction in that context. We recall the results in \cite{BK} and make essential use of them to prove our main result. The post-Lie algebraic formalism allows for a precise encoding of the action of the deformed grafting product alongside its interaction with the $\uparrow^i$ operators. Uploading this data onto the universal enveloping algebra via the construction in \cite{ELM} allows for a finer analysis of the $\CH_{2}$ Hopf algebra and the relation it bears to the simpler deformed Grossman-Larson Hopf algebra. This information, together with the isomorphism $\Psi_{\Phi}$ ultimately leads to the main result Theorem~\ref{main_result_paper} which is an isomorphism between the $\CH_{2}$ Hopf algebra and an appropriate quotient of the shuffle Hopf algebra.

 Finally in Section~\ref{section::5}, we explore some applications in the context of regularity structures. We show how, using this isomorphism one may move from one encoding to another. This is done by composing the elements of the structure group with the isomorphism and obtaining a new structure group acting on the relevant quotient of the tensor Hopf algebra. This is similar in spirit to the approach in \cite{BC}. The main algebraic result of the section is Theorem~\ref{T_+ isomorphism} which identifies $ \CT_+ $ the vector space used for the structure group as isomorphic to a Hopf subalgebra of the quotient of the shuffle Hopf algebra described before. Then, we propose an attempt to rewrite Theorem~\ref{chiso} int the context of regularity structures (see Theorem~\ref{chiso_bis}).
 
 \subsection*{Acknowledgements}
 
 {\small
 
 	Y. B. gratefully acknowledges funding support from the European Research Council (ERC) through the ERC Starting Grant Low Regularity Dynamics via Decorated Trees (LoRDeT), grant agreement No.\ 101075208.
 	Views and opinions expressed are however those of the author(s) only and do not necessarily reflect those of the European Union or the European Research Council. Neither the European Union nor the granting authority can be held responsible for them.
 	F. K. was funded by the School od Mathematics at the University of Edinburgh during the writing of this work.
 }

\section{From non-geometric to geometric rough paths}

\label{section::2}

In this section, we present the state of the art in the context of moving from geometric to branched rough paths. We formulate the results using the notion of anisotropic and branched $\gamma$ rough paths, with the corresponding spaces denoted by $\mathbf{ARP}^{\gamma}$ and $\mathbf{BRP}^{\gamma}$. We shall give the definitions of the relevant Hopf algebras and then proceed to outline the results in \cite{HK15}, \cite{BC} and \cite{TZ}. Our approach in the next sections is a generalization of the approach presented in \cite{BC}.

\ \ \ \ Before moving further, we introduce some notations. We shall denote by $T_{E}^{V}$ the set of all rooted trees with vertices decorated by $V$ and edges decorated by $E$ and by $F_{E}^{V}$ the set of forests which consists of monomials over $T_{E}^{V}$. We then denote by $\CT_{E}^{V}$ the formal linear span of $T_{E}^{V}$. We also denote by $\CF_{E}^{V}$ the forest algebra, which consists of all polynomials over $T_{E}^{V}$. It is the free commutative algebra over the vector space $\CT_{E}^{V}$. 

 Furthermore, we denote by $ P_E^V $ the set of planted trees and by $ \mathcal{P}_E^V $ their linear span. \label{planted_p} A planted tree is of the form $ \mathcal{I}_{a}(\tau) $ where $ \tau \in \mathcal{T}_E^V $ and $ \mathcal{I}_{a}(\tau) $ denotes the grafting of the tree $ \tau $ onto a new root with no decoration via an edge decorated by $a \in E$. We also use $N_{\tau}$, $E_{\tau}$ and $L_{\tau}$ for the set of vertices, edges and leaves of a tree $\tau \in T_{E}^{V}$.
We may equip these structures with different products and at these times we will essentially use the notation to refer to the underlying vector spaces. 

 We will use $\catname{Vec}$ for the category of vector spaces and $\catname{Alg}$ and $\catname{CAlg}$ for the category of algebras and commutative algebras respectively. We shall use $S: \catname{Vec} \rightarrow \catname{CAlg}$ for the symmetric algebra functor taking a vector space $V$ to the free commutative algebra over $V$. Similarly, we use $T: \catname{Vec} \rightarrow \catname{Alg}$ for the tensor algebra functor taking a vector space $V$ to the free associative algebra over $V$. 

 We also define an admissible cut of a tree to be any selection of edges that are pairwise incomparable with respect to the partial ordering induced by the tree. If $h \in \CF_{E}^{V}$ then we use $\text{Adm}(h)$ to denote the set of admissible cuts of $h$. 

\begin{definition}
We define the Butcher-Connes-Kreimer coproduct $\Delta_{\tiny{\text{BCK}}}$ on the symmetric algebra $S(\mathcal{P}_{E}^{V})$ by setting, for $h \in \mathcal{P}_{E}^{V}$:
\begin{equs} \label{Connes_Kreimer_1}
\Delta_{\tiny{\text{BCK}}} (h) := \sum_{C \in \scriptsize{\text{Adm}}(h) }  R^C(h) \otimes \tilde{P}^C(h)
\end{equs} 
 Here, we have used $\tilde{P}^C(h)$ to denote the pruned forest that is formed by collecting all the edges at or above the cut, including the ones upon which the cut was performed, so that the edges that were attached to the same node in $h$ are part of the same tree. The term $R^C(h)$  corresponds to the "trunk", that is the subforest formed by the edges not lying above the ones upon which the cut was performed.   In the case of decorated trees with no decorations on the edges, we consider the classical Butcher-Connes-Kreimer coproduct given by:
 \begin{equs} \label{Connes_Kreimer_2}
\hat{\Delta}_{\tiny{\text{BCK}}} (h) := \sum_{C \in \scriptsize{\text{Adm}}(h) }  R^C(h) \otimes P^C(h)
\end{equs} 
where this time, we do not keep the edges in the cut $ C $ with $ P^{C}(h) $.
\end{definition}

The Butcher-Connes-Kreimer Hopf algebra $\CH_{\text{\tiny{BCK}}}$ is the graded bialgebra with underlying algebraic structure given by the natural symmetric product on $ S(\mathcal{P}^V_E) $ and coalgebra structure given by $ \Delta_{\text{\tiny{BCK}}}  $. The grading is defined to be the number of edges. As a graded connected bialgebra, it is also a Hopf algebra. We denote the usual Butcher-Connes-Kreimer Hopf algebra by $\hat{\CH}_{\text{\tiny{BCK}}}$ given by $ S(\mathcal{T}^V) $ equipped with the forest product and coproduct $ \hat{\Delta}_{\text{\tiny{BCK}}}  $. The space $ \mathcal{T}^V $ si the linear span of decorated trees with only deorations on the vertices. For this Hopf algebra, the grading is the number of nodes.

\begin{remark} The coproduct $ \hat{\Delta}_{\text{\tiny{BCK}}} $ is used for branched rough paths. In the context of SPDEs with several equations, one has to keep track of the various operators needed for a given iterated integral. This is perfomed by decorations on the edges. Let us mention, that a variant of the Butcher-Connes-Kreimer coproduct has been used in the context of Volterra-type rough paths in \cite{BK1} where the edges cut are also kept for keeping track the fact that they were attached to the same node. This is crucial for proving a generalized Chen's relation involving a convolution-type product.
\end{remark}

\begin{remark}
We shall frequently use Sweedler's notation and will write $\Delta_{\text{\tiny{BCK}}}(h) = \sum_{(h)} h^{(1)} \otimes h^{(2)}$ to denote the sum ranging over the expansion of the coproduct $\Delta_{\text{\tiny{BCK}}}$. We will also frequently use Sweedler's notation for the other coproducts that will be introduced.
\end{remark}

One can provide a recursive formula for the  Butcher-Connes-Kreimer coproduct  $ \Delta_{\text{\tiny{BCK}}}$:
\begin{equs} \label{recur_def_deltaCK}
\begin{aligned}
\Delta_{\text{\tiny{BCK}}} \one &  = \one \otimes \one \\
\Delta_{\text{\tiny{BCK}}}  
\CI_a(X^k \tau) & = 
\left( \id  \otimes \CI_a(X^k \cdot) \right) \Delta_{\text{\tiny{BCK}}} \tau + \CI_a(X^k \tau) \otimes \one,
\end{aligned}
\end{equs}
and it extends multiplicatively to the product of $ S(\mathcal{P}^V_E) $.  Here, $ \tau  $ belongs to $ S(\mathcal{P}^V_E) $. The notation $ X^k \tau $ with $ k \in V $ means that we identify the roots of the planted trees in $\tau$ into a single root decorated by $k$. The map $ \CI_a(X^k \cdot) $ is an operator from $ S(\mathcal{P}^V_E) $ into $ S(\mathcal{P}^V_E) $ that grafts the decorated tree $ X^k \tau $ onto a new root via an edge decorated by $ a $.
From this coproduct, one has an associative product denoted as the Grossman-Larson product $ \star $ defined as:
\begin{equs}
\sigma \star \tau := \left( \sigma \otimes \tau \right) \Delta_{\text{\tiny{BCK}}}
\end{equs}
where we use the following identification by viewing  $ \tau \in S(\mathcal{P}^V_E) $ as a linear functional in  $  S(\mathcal{P}^V_E) $ such that : $ \langle\tau,\bar \tau \rangle =  S(\tau) $ if $ \tau = \bar \tau $ and zero elsewhere. The coefficient $ S(\tau) $ corresponds to the internal symmetry factor of the forest $ \tau $. It is given by $ \prod_{i} |\text{Aut}(\tau_i)| $ where the $ \tau_i  $ are the trees of $ \tau $ and $  |\text{Aut}(\tau_i)| $ is the number of automorphisms preserving $ \tau_i $.
 We define a second coproduct on $ S(\mathcal{P}^V_E) $ as
\begin{equs}
\Delta \tau = \tau \otimes \one + \one \otimes \tau
\end{equs}
for every $ \tau \in \mathcal{P}^V_E $
and then extends multiplicatively for the symmetric product of $ S(\mathcal{P}^V_E) $.

For the rest of the section, we set  $V = \{0,...,d\}$. We denote by $ \CT$ the set of  decorated trees whose nodes are decorated by $ V $ and edges are not decorated. The set $  \CT_N $ consists of decorated trees of $ \CT $  with at most $ N  $ nodes.
We also set $ \CF $ to be forests formed of decorated trees in $\CT$ and $ \CF_N $ forests with at most $ N $ nodes. 
We denote by $ \CG $ (resp. $ \CG_N $) the set of characters from the Butcher-Connes-Kreimer Hopf algebra $\hat{\CH} $ (resp. $\hat{\CH}_N = \langle \CF_N \rangle $) into $\mathbb{R}$. These are linear algebra morphisms forming a group with respect to the convolution product $ \star_0 $ 
\begin{equs}
\label{convolution}
	X \star_0 Y :=  (X \otimes Y ) \hat{\Delta}_{\text{\tiny{BCK}}}. 
\end{equs}
The unit for the convolution product is the co-unit $\mathbf{1}^*$ which is non-zero only on the empty tree.

\begin{definition}

Let $\gamma\in\,]0,1[$, we define a branched $\gamma$-rough path as a map $X:[0,1]^2\to\cal G$ such that $X_{tt}=\one^{*}$ and such that Chen's relation is satisfied:
  \begin{equs} \label{chen}
  X_{su} \star_0 X_{ut}=X_{st}, \qquad s,u,t\in[0,1],
  \end{equs}
  and the analytical condition
  \begin{equs} \label{analytical_0}
  \sup_{0 \leq s,t \leq 1} \frac{\langle  X_{st}, \tau \rangle}{|t-s|^{(1-\gamma)|\tau|_0 + \gamma |\tau|}} < \infty,
  \end{equs}
  where $ |\tau|_0 $ counts the number of times the decoration $ 0 $ appears in $ \tau $. In the sequel, we will consider the biggest $ N \in \mathbb{N} $ such that  $ \gamma N \leq 1 $. The branched $ \gamma $-rough paths are taking values in $ \CG_{N} $.
 We denote this space by $\BR^\gamma$.
\end{definition}

 We also introduce the shuffle Hopf algebra. Given an alphabet $ A $, we consider the linear span of the words on this alphabet denoted by $ T(A) $. We set $ \varepsilon  $ as the empty word. The product on $ T(A)$ is the shuffle product defined by
  \begin{equs}
  \varepsilon \shuffle v=v\shuffle \varepsilon  =v, \quad (au\shuffle bv) = a(u\shuffle bv) + b(au\shuffle v)
  \end{equs}
for all $u,v\in T(A)$ and $a,b\in A$.
We first define the deshuffle coproduct $ \Delta_{\shuffle} $ dual to the shuffle product defined for every $ a \in A $ as
\begin{equs}
\Delta_{\shuffle} a = a \otimes \varepsilon + \varepsilon \otimes a
\end{equs}
and then extends multiplicatively for the tensor product $ \otimes $. The coproduct $\bar\Delta:T(A)\to T(A)\otimes T(A)$ is the deconcatenation of words: 
\[ \bar\Delta(a_1\dotsm a_n) = a_1\dotsm a_n\otimes \varepsilon  + \varepsilon \otimes a_1\dotsm a_n + \sum_{k=1}^{n-1}a_1\dotsm a_k\otimes a_{k+1}\dotsm a_n.\]
Equipped with the shuffle product and the deconcatanation coproduct, $ T(A) $  is a Hopf algebra. The grading of $ T(A) $
 is given by the length of words $\ell(a_1\dotsm a_n) = n$. We denote by $ \CG_A $ the group of characters associated to $ T(A) $ and by $ * $ the convolution product.

\begin{definition} 
 
  An anisotropic $\gamma$-rough path, with $\gamma=(\gamma_a, \, a\in A)$, $ 0<\gamma_a<1$, is a map $X:[0,1]^2\to \CG_{A}$ such that $ X_{tt} = \varepsilon^{*} $ where $  \varepsilon^{*}$ is the counit. It satisfies
\begin{equs}
X_{su} * X_{ut}=X_{st}, \qquad |\langle X_{st},v\rangle|\lesssim|t-s|^{\hat{\gamma}\omega(v)}
\end{equs} 
for all $(s,u,t)\in[0,1]^3$ and words $v$. Here, $\hat{\gamma}=\min_{a\in A}\gamma_a$ and for a word $v=a_1\dotsm a_k$ of length $k$ we define
\begin{equation}
  \label{eq:weight}
  \omega(v)=\frac{\gamma_{a_1}+\dotsc+\gamma_{a_k}}{\hat{\gamma}}=\frac{1}{\hat{\gamma}}\sum_{a\in A} n_a(v)\gamma_a
\end{equation}
where $n_a(v)$ is the number of times the letter $ a $ appears in $ v $.
The space of anisotropic $\gamma$-rough paths is denoted by $ \AN^{\gamma} $. When the $ \gamma_a $ are all equal to a fixed $\gamma $, one recovers the classical geometric rough paths.
\end{definition}

 As for the branched rough paths, we perform a truncation and consider paths taking values in $ \CG_{\CT_N,N}$. Elements of $ \CG_{\CT_N,N}$  are characters over $ T_N(\CT_N) $ which are  words $ v = \tau_1 \cdots \tau_n $ built on the alphabet $ \CT_N $ such that $  \sum_{i=1}^n |\tau_i| \leq N $. 
 
 


 The first approach to moving from trees into words is given by the Hairer-Kelly map $ \Psi_{\text{\tiny{HK}}}  $ in the context of geometric rough paths in \cite{HK15}. 
  This map first introduced in \cite{HK15}  is given in \cite[Def. 4, Sec. 6]{Br173} as the unique Hopf algebra morphism from $\mathcal{H}$ to the shuffle Hopf algebra $(T(\mathcal{T}), \shuffle,\bar\Delta)$ obeying: 
\[	\Psi_{\text{\tiny{HK}}} = (\Psi_{\text{\tiny{HK}}} \otimes P_{\one}) \hat{\Delta}_{\text{\tiny{BCK}}} 
\]
where  $P_{\one}:=\mathrm{id}-\one^{*}$ is the augmentation projector.
The following theorem given in \cite{TZ} established a correspondence between  anisotropic rough paths and branched rough paths:
\begin{theorem}\label{Hairer-Kelly}
  Let $X$ be a branched $\gamma$-rough path. There exists an anisotropic geometric rough path $\bar{X}$ indexed by words on the alphabet $\CT_N$, $ N = \lfloor 1/\gamma \rfloor $, with exponents $(\gamma_\tau, \tau\in\CT_N)$, and such that $\langle X,\tau\rangle = \langle\bar{X},\Psi_{\text{\tiny{HK}}}(\tau)\rangle$.
\end{theorem}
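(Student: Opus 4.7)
My plan is to construct $\bar X_{st}$ in two stages. First, I would prescribe its values on the letters of the alphabet $\CT_N$ by $\langle\bar X_{st},\tau\rangle:=\langle X_{st},\tau\rangle$, attaching to each letter the regularity exponent dictated by \eqref{analytical}--\eqref{analytical_0}, namely $\gamma_\tau:=\gamma|\tau|$ in the generic case and $\gamma_\tau:=(1-\gamma)|\tau|_0+\gamma|\tau|$ when zero-decorated nodes appear. I would then extend these level-one data to words of arbitrary length by a Lyons-type extension theorem adapted to the anisotropic setting: the truncation $N=\lfloor 1/\gamma\rfloor$ is chosen precisely so that every word $w=\tau_1\otimes\cdots\otimes\tau_k$ beyond the threshold satisfies $\sum_i\gamma_{\tau_i}>1$, guaranteeing a unique extension as a functional on $T(\CT_N)$ satisfying Chen's relation $\bar X_{su}*\bar X_{ut}=\bar X_{st}$ and the desired H\"older bounds.

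The crucial algebraic point is to show that $\bar X_{st}$ so constructed is a \emph{shuffle} character, i.e.\ satisfies $\langle\bar X_{st},u\shuffle v\rangle=\langle\bar X_{st},u\rangle\langle\bar X_{st},v\rangle$. Since $\Psi_{\text{\tiny{HK}}}$ is by definition a Hopf algebra morphism intertwining the forest product on $\hat{\CH}_{\tiny{\text{BCK}}}$ with the shuffle product on $T(\CT_N)$, and $X_{st}$ is a character of $\hat{\CH}_{\tiny{\text{BCK}}}$, the pullback of $X_{st}$ already respects the shuffle on the image $\Psi_{\text{\tiny{HK}}}(\hat{\CH}_{\tiny{\text{BCK}}})$. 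The uniqueness part of the extension theorem then propagates multiplicativity to all of $T(\CT_N)$, for instance by prescribing the values on a Lyndon-word generating set.

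Finally, I would establish the compatibility $\langle X_{st},\tau\rangle=\langle\bar X_{st},\Psi_{\text{\tiny{HK}}}(\tau)\rangle$ by induction on $|\tau|$, unfolding the defining recursion $\Psi_{\text{\tiny{HK}}}=(\Psi_{\text{\tiny{HK}}}\otimes P_{\one})\hat{\Delta}_{\tiny{\text{BCK}}}$. Pairing both sides with $\bar X_{st}$ produces an expression which, upon invoking the inductive hypothesis together with the shuffle-character property just proven, collapses to the expansion of $\langle X_{st},\tau\rangle$ produced by Chen's relation for $X$. The principal technical obstacle is bookkeeping the anisotropic weights $\gamma_\tau$ in the presence of zero-decorated nodes: one must verify that the extension procedure produces exactly the mixed exponents predicted by \eqref{analytical_0} rather than collapsing to a single regularity parameter, which is precisely why the statement requires the \emph{anisotropic} geometric framework rather than the classical isotropic one.
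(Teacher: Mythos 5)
Your overall architecture (define $\bar X$ on letters, extend by a Lyons--Victoir type theorem, check compatibility with $\Psi_{\text{\tiny{HK}}}$) is the right shape, and you correctly identify the two key ingredients; note that the paper itself offers no proof of this statement --- it quotes it from \cite{TZ} (building on \cite{HK15}) and only remarks that it hinges on the non-canonical Lyons--Victoir extension. However, your very first step contains a genuine error that propagates through the rest of the argument. You cannot prescribe $\langle\bar X_{st},\tau\rangle:=\langle X_{st},\tau\rangle$ on the letters and still hope to obtain $\langle X,\tau\rangle=\langle\bar X,\Psi_{\text{\tiny{HK}}}(\tau)\rangle$. Unfolding the recursion $\Psi_{\text{\tiny{HK}}}=(\Psi_{\text{\tiny{HK}}}\otimes P_{\one})\hat\Delta_{\text{\tiny{BCK}}}$ gives
\begin{equs}
\Psi_{\text{\tiny{HK}}}(\tau)=\tau+\sum{}' \,\Psi_{\text{\tiny{HK}}}(\tau^{(1)})\otimes\tau^{(2)},
\end{equs}
where the primed sum runs over the reduced coproduct and produces words of length at least $2$ whose letters are trees with strictly fewer nodes than $\tau$. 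These extra words pair non-trivially with $\bar X$. Concretely, for the two-node tree $\tau$ with root $\bullet_i$ and a single child $\bullet_j$, one has $\Psi_{\text{\tiny{HK}}}(\tau)=\tau+\bullet_i\otimes\bullet_j$, and $\langle\bar X_{st},\bullet_i\otimes\bullet_j\rangle$ is a genuine second-order iterated integral which does not vanish; with your assignment the compatibility identity fails already at two nodes. The correct construction is inductive on the number of nodes: assuming $\bar X$ already built on all words in letters with fewer than $n$ nodes, one \emph{defines} the value on a new letter $\tau$ with $n$ nodes by
\begin{equs}
\langle\bar X_{st},\tau\rangle:=\langle X_{st},\tau\rangle-\bigl\langle\bar X_{st},\,\Psi_{\text{\tiny{HK}}}(\tau)-\tau\bigr\rangle,
\end{equs}
so that the compatibility holds by construction, and only then invokes the extension theorem to define $\bar X$ on all words containing the new letter.

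Two further points need repair. First, your claim of a \emph{unique} extension is overstated: uniqueness from Chen's relation and the H\"older bounds holds only for words whose total weight exceeds $1$; for the intermediate levels the Lyons--Victoir extension merely asserts existence and is non-canonical (this is exactly the content of the remark following the theorem in the paper, and the reason \cite{TZ} must analyse the resulting non-uniqueness via the action of the group $\CC^\gamma$). Second, the shuffle-character property is not something you need to propagate by hand from the image of $\Psi_{\text{\tiny{HK}}}$: the Lyons--Victoir extension is performed directly in the character group of the (anisotropic) shuffle Hopf algebra, so multiplicativity of $\bar X_{st}$ is built into the extension step rather than deduced afterwards; your argument as written conflates the Hopf-subalgebra $\Psi_{\text{\tiny{HK}}}(\hat\CH_{\text{\tiny{BCK}}})$, on which $X$ canonically induces a character, with the full algebra $T(\CT_N)$, to which one must extend.
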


\begin{remark}
The previous theorem relies on the  Lyons-Victoir extension theorem given in \cite{LV07}  which is not canonical. 
The authors in \cite{TZ} identified   a transitive free action of the additive group $\CC^\gamma$ on $ \BR^\gamma $. The abelian group $\CC^\gamma$ is given by  
\[
  \cal C^\gamma:=\{(g^\tau)_{\tau\in \CT_N}: \, g^\tau_0=0, \, g^\tau\in C^{\gamma_\tau}([0,1]), \, \forall\, \tau\in\CT, |\tau|\leq N\}.
\]
Explicit expressions for $ g $ have been given in \cite{Br20} for the BPHZ renormalisation at the level of rough paths introduced in \cite{BCFP}.
Parametrisation in the context of regularity structures has been considered in \cite{BB21b}.
\end{remark}

 Lastly, the approach most relevant to this work, given in \cite{BC}, constructs a bijection between the two spaces 
$ \BR^{\gamma} $ and $ \AN^{\gamma} $. The main idea is to use an algebraic result from \cite{Foi02,cha10}. We denote by $ \hat{\mathcal{H}}_{\text{\tiny{GL}}} $ the Grossmann-Larson Hopf algebras (resp. $ \mathcal{H}_{\text{\tiny{GL}}} $)  defined on $ S(\mathcal{T}) $ (resp. $ S(\mathcal{P}^V_E) $ ) equipped with the product $ \star_0 $ (resp. $ \star $) and the coproduct $ \Delta $. We recall this result of Chapoton and Foissy \cite{Foi02,cha10}:
\begin{theorem} \label{Foissy_Chapoton_basis}
There exists a subspace $ \hat{\CB} = \langle\tau_1, \tau_2,...\rangle $ of $ \CT $ (resp. $ \mathcal{B} $ and  $ \mathcal{P}^V_E $) such that $ \hat{\CH}_{\text{\tiny{GL}}} $ (resp. $ \CH_{\text{\tiny{GL}}} $) is isomorphic as a Hopf algebra to the tensor Hopf algebra $ (T(\hat{\CB}), \otimes, \Delta_{\shuffle})$ (resp. $ (T(\CB), \otimes, \Delta_{\shuffle})$) which consists of the linear span of the set of words from the alphabet $\hat{\CB}$ (resp. $ \CB $), endowed with the tensor product and the shuffle coproduct. 
\end{theorem}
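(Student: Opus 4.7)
The strategy is to apply the Cartier-Milnor-Moore (CMM) theorem to the cocommutative connected graded Hopf algebras $\hat{\CH}_{\text{\tiny{GL}}}$ and $\CH_{\text{\tiny{GL}}}$, and then combine the resulting enveloping-algebra description with the freeness of the commutator Lie algebra of the pre-Lie grafting, which is the substantive Chapoton--Foissy input. I present the argument for $\hat{\CH}_{\text{\tiny{GL}}}$; the case of $\CH_{\text{\tiny{GL}}}$ is entirely parallel.

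First I note that $\hat{\CH}_{\text{\tiny{GL}}} = (S(\mathcal{T}^{V}_E),\star_0,\Delta)$ is graded (by number of nodes), connected, and cocommutative, since $\Delta$ makes the single trees primitive and is extended multiplicatively. Over a characteristic zero field CMM gives a canonical Hopf algebra isomorphism
\begin{equs}
\hat{\CH}_{\text{\tiny{GL}}} \;\cong\; U(L), \qquad L := \mathrm{Prim}(\hat{\CH}_{\text{\tiny{GL}}}),
\end{equs}
where the Lie bracket on $L$ is the commutator $[x,y] = x\star_0 y - y\star_0 x$. A direct inspection of $\hat\Delta_{\text{\tiny{BCK}}}$, together with the pairing used to define $\star_0$, identifies $L$ with the linear span $\mathcal{T}^{V}_E$ of single (decorated) trees.

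Second, I identify the Lie structure on $L$ in terms of the pre-Lie grafting product $\curvearrowright$ on $\mathcal{T}^{V}_E$ that will be recalled in Section~\ref{section::3}. On single trees one has $\tau\star_0\sigma = \tau\curvearrowright\sigma + \sigma\cdot\tau$ as the only admissible cuts contribute either the trivial or the total cut, so the induced bracket is
\begin{equs}
[\tau,\sigma] \;=\; \tau\curvearrowright\sigma \;-\; \sigma\curvearrowright\tau,
\end{equs}
i.e.\ $L$ is the commutator Lie algebra of the pre-Lie algebra $(\mathcal{T}^{V}_E,\curvearrowright)$. By Chapoton--Livernet this pre-Lie algebra is free on an appropriate generating set; combined with the Chapoton--Foissy result \cite{Foi02,cha10} that the commutator Lie algebra of a free pre-Lie algebra is itself free, $L$ is a free Lie algebra. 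I then select a graded free generating subspace $\hat{\CB}\subset L$ (e.g.\ any graded complement of $[L,L]$ lifted to $L$), so that $L \cong \mathrm{Lie}(\hat{\CB})$.

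Finally, for the free Lie algebra $\mathrm{Lie}(\hat{\CB})$ it is classical that
\begin{equs}
U(\mathrm{Lie}(\hat{\CB})) \;\cong\; \bigl(T(\hat{\CB}),\,\otimes,\,\Delta_{\shuffle}\bigr)
\end{equs}
as Hopf algebras: the underlying associative algebra is $T(\hat{\CB})$ by the universal property of the tensor algebra, and the induced Hopf coproduct must make every element of $\hat{\CB}$ primitive and be multiplicative with respect to $\otimes$, which forces it to be the deshuffle coproduct $\Delta_{\shuffle}$. Composition with the CMM isomorphism yields the desired Hopf algebra isomorphism $\hat{\CH}_{\text{\tiny{GL}}} \cong (T(\hat{\CB}),\otimes,\Delta_{\shuffle})$, and the same reasoning, with $\mathcal{T}^{V}_E$ replaced by the planted trees $\mathcal{P}^{V}_E$ and with $\star_0$ replaced by $\star$, gives the corresponding statement for $\CH_{\text{\tiny{GL}}}$. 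The principal difficulty is concentrated in the second step: verifying that the underlying pre-Lie algebra is free (so that Chapoton--Foissy applies) requires, in the decorated planted setting, identifying the correct generators, whereas the CMM step and the passage from a free Lie algebra to $T(\hat{\CB})$ are purely structural.
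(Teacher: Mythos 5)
Your proposal is correct and follows essentially the same route as the paper, which only outlines the construction and defers to \cite{Foi02,cha10}: the unique decomposition into $\star$-products of elements of $\CB$ asserted there is exactly the freeness of $U(L)$ as an associative algebra that you obtain from Cartier--Milnor--Moore together with Chapoton's theorem that the commutator Lie algebra of a free pre-Lie algebra is free. The only cosmetic discrepancy is the order of grafting in your formula for $\tau\star_0\sigma$ (with the paper's convention the pruned part $\sigma$ is grafted onto the trunk $\tau$), which is immaterial since the forest product is commutative and the commutator bracket is unchanged.
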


We provide an outline here of the construction. First, one proves the existence of a set 
\begin{equs} \label{basis_B}
B = \{ \tau_{1}, \tau_{2}, ... \}
\end{equs}
 that consists of a basis of primitive elements of the Hopf algebra $\CH_{\text{\tiny{BCK}}}$ belonging to $ \mathcal{P}^V_E $ such that every $\tau \in \CH_{\text{\tiny{GL}}}$ has a unique representation of the form:
\begin{equs} \label{uniquedec}
\tau = \sum_{R} \lambda_R \tau_{r_1} \star \ldots \star \tau_{r_n}
\end{equs}
where the sum is performed over finitely many multi-indices $R = (r_1,\ldots,r_n)$. 
Then, one can exhibit an isomorphism $ \Psi_{\text{\tiny{CF}}} $ between the two Hopf algebras  $ \CH_{\text{\tiny{GL}}} $ and $ T(\CB) $ where $ \CB$ is the linear span of $B$ as follows:
\begin{equs}
\Psi_{\text{\tiny{CF}}} : \tau_1 \star \ldots \star \tau_r \mapsto
\tau_1 \otimes \ldots \otimes \tau_r
\end{equs}
where $ \tau_1 \otimes \ldots \otimes \tau_r \in T(\CB) $. This will be the isomorphism that we will use in the next section. We will obtain an isomorphism of the deformed Grossman-Larson Hopf algebra $\CH_{\text{\tiny{DGL}}}$ with the tensor Hopf algebra  $ (T(\CB), \otimes, \Delta_{\shuffle})$.

In the context of rough paths, one uses the  isomorphism $ \hat{\Psi}_{\text{\tiny{CF}}} $ between the two spaces  $ \CH_{N}^{*} $ and $ T_N(\hat{\CB}_{N}) $ based on the basis $ \hat{\CB}_N $ (see \cite[Lemma 4.2]{BC}):
\begin{equs}
\hat{\Psi}_{\text{\tiny{CF}}} : \tau_1 \star_0 \ldots \star_0 \tau_r \mapsto
\tau_1 \otimes \ldots \otimes \tau_r
\end{equs}
where $ \tau_1 \otimes \ldots \otimes \tau_n \in T_N(\hat{\CB}_{N}) $. Here $ \hat{\CB}_{N} $ are elements of $ \hat{\CB} $ with at most $ N $ nodes.
One has from \cite{BC}

\begin{theorem} \label{chiso} Let $ X \in \BR^{\gamma} $, then $ \hat{X} := \hat{\Psi}_{\text{\tiny{CF}}}(X) \in \AN^{\gamma}$.
\end{theorem}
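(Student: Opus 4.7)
The strategy is to transport $ X $ through the Hopf algebra isomorphism $ \hat{\Psi}_{\text{\tiny{CF}}} $ of Theorem~\ref{Foissy_Chapoton_basis} and verify that the resulting object satisfies the three defining properties of an anisotropic rough path: the diagonal normalisation, Chen's multiplicativity, and the Hölder-type analytical bound.

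First, I would define $ \hat{X}_{st} $ as a character of the shuffle Hopf algebra $ (T_N(\hat{\CB}_N), \shuffle, \bar\Delta) $ by using the correspondence between characters and group-like elements: since $ X_{st} $ is a character of $ \hat{\CH}_{\text{\tiny{BCK}}} $, it corresponds to a group-like element of (a suitable completion of) $ \hat{\CH}_{\text{\tiny{GL}}} $, and its image under $ \hat{\Psi}_{\text{\tiny{CF}}} $ is a group-like element of $ (T_N(\hat{\CB}_N), \otimes, \Delta_{\shuffle}) $, hence a character of the dual shuffle Hopf algebra. The unit condition $ \hat X_{tt} = \varepsilon^{\ast} $ follows from $ X_{tt} = \one^{\ast} $ together with $ \hat{\Psi}_{\text{\tiny{CF}}}(\one^{\ast}) = \varepsilon^{\ast} $.

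Chen's relation for $ \hat X $ is then automatic. Since $ \hat{\Psi}_{\text{\tiny{CF}}} $ is a Hopf algebra isomorphism, it intertwines the convolution products on the respective dual groups: the product $ \star_0 $ on characters of $ \hat{\CH}_{\text{\tiny{BCK}}} $, dual to $ \hat{\Delta}_{\text{\tiny{BCK}}} $, corresponds under $ \hat{\Psi}_{\text{\tiny{CF}}} $ to concatenation $ \otimes $ in $ T_N(\hat{\CB}_N) $, which is dual to deconcatenation $ \bar\Delta $. Hence $ X_{su} \star_0 X_{ut} = X_{st} $ translates directly to $ \hat X_{su} \ast \hat X_{ut} = \hat X_{st} $.

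The crux of the argument is the analytical bound. For a word $ v = \tau_1 \otimes \dots \otimes \tau_n $ with $ \tau_i \in \hat{\CB}_N $, the explicit form of the Chapoton-Foissy isomorphism gives
\[
\hat{\Psi}_{\text{\tiny{CF}}}^{-1}(v) = \tau_1 \star_0 \dots \star_0 \tau_n \in \hat{\CH}_{\text{\tiny{GL}}},
\]
so that, via the duality pairing together with the appropriate symmetry factors, $ \hat X_{st}(v) $ is a finite linear combination of values $ X_{st}(\sigma) $ where $ \sigma $ ranges over the forests produced by iterated Grossman-Larson grafting of $ \tau_1, \ldots, \tau_n $. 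The key combinatorial observation is that $ \star_0 $ only grafts existing nodes and creates no new ones, so every such $ \sigma $ satisfies $ |\sigma| = |\tau_1| + \dots + |\tau_n| $. Applying \eqref{analytical} termwise and setting $ \gamma_\tau := \gamma |\tau| $ for $ \tau \in \hat{\CB}_N $ yields the anisotropic bound with $ \hat\gamma = \gamma $ and weight $ \omega(v) = |\tau_1| + \dots + |\tau_n| $.

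The main obstacle is the careful handling of the duality pairing and its symmetry factors $ S(\tau) $, arising in the identification of forests as functionals, together with the treatment of trees carrying the vertex decoration $ 0 $, for which the refined bound \eqref{analytical_0} must be used; one must then check that the counting of $ 0 $-decorations is preserved by the Grossman-Larson expansion, so that the resulting sum of Hölder exponents matches the intended anisotropic weight $ \omega(v) $.
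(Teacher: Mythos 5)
The paper does not actually prove this statement: Theorem~\ref{chiso} is imported verbatim from \cite{BC} (``One has from \cite{BC}''), so there is no in-paper proof to compare against. Your proposal reconstructs what is essentially the argument of \cite{BC}: the unit and Chen conditions are formal consequences of $\hat{\Psi}_{\text{\tiny{CF}}}$ being a Hopf algebra isomorphism intertwining $\star_0$ with concatenation, and the analytic estimate rests on the single combinatorial fact that the Grossman--Larson product and the change of basis from forests to $\star_0$-products of elements of $\hat{\CB}_N$ preserve the node grading (and, for the refined bound \eqref{analytical_0}, the count $|\cdot|_0$ of $0$-decorations, since $\star_0$ neither creates nodes nor alters decorations). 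The one point I would tighten is the passage ``$\hat X_{st}(v)$ is a finite linear combination of values $X_{st}(\sigma)$'': the cleanest formulation is that $\langle \hat X_{st}, v\rangle$ is the coefficient of $\tau_1 \star_0 \cdots \star_0 \tau_n$ in the expansion \eqref{uniquedec} of $X_{st}$, and that the change of basis between forests and such $\star_0$-monomials is an isomorphism of each finite-dimensional graded component, so this coefficient is a linear combination of the pairings $\langle X_{st},\sigma\rangle$ over forests $\sigma$ of the same total node number and the same $0$-decoration count; with that said, your argument is correct and complete in outline.
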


In \cite{Br20}, the action of the renormalisation on this construction has been described. The family of renomalisation maps considered are BPHZ renormalisation map $ M $ (inspired from the BPHZ renormalisation of Feynman diagrams \cite{WZ69,Hepp,BP} which was used in the context of regularity structures \cite{BHZ,CH16}) whose adjoints $ M^{*} $ are morphisms for the product $ \star_0 $:
\begin{equs}
M^{*} \left( \tau \star_0 \sigma \right) = M^{*} \tau \star_0 M^{*} \sigma. 
\end{equs}
Then, one is able to define a renormalisation map $ \hat{M}^{*} $ on $ T_N(\hat{\CB}_{N}) $ that commutes with the isomorphism $ \hat{\Psi}_{\text{\tiny{CF}}} $ (see \cite[Theorem 4.7]{Br20}):
\begin{equs}
\hat{M}^{*} \hat{X} = \hat{M}^{*} \hat{\Psi}_{\text{\tiny{CF}}}(X)  =   \hat{\Psi}_{\text{\tiny{CF}}}( M^{*}  X).
\end{equs} 
BPHZ renormalisation maps in the context of rough paths have been first considered in \cite{BCFP} with some examples provided in \cite{BCF}.


\section{An isomorphism for the deformed Grossman-Larson Hopf algebra}

\label{section::3}

In this section, we introduced pre-Lie and multi-pre-Lie algebras with the main example being the grafting product for decorated trees and its deformations given in \cite{BCCH,BM22}. Then, we apply the Guin-Oudom procedure \cite{Guin1, Guin2} to the grafting product for deriving the Grossman-Larson Hopf algebra $\CH_{\text{\tiny{GL}}}$, the graded dual of the Butcher-Connes-Kreimer Hopf algebra $\CH_{\text{\tiny{BCK}}}$. Using the functor given by Guin-Oudom, we are able to lift the isomorphism $ \Theta $ introduced in \cite{BM22} (see \eqref{recursive_theta}) at the level of the grafting products to an isomorphism $ \Phi $ for the deformed Grossman-Larson Hopf algebras (see Theorem~\ref{theorem_functor}). 
This allows us to state our main result which is Theorem~\ref{Main} that translates the Chapoton-Foissy isomorphism in the context of the deformed Grossman-Larson Hopf algebra: One just applies the isomorphism $ \Phi $ to the basis previously obtained in Theorem~\ref{Foissy_Chapoton_basis}.

\ We begin this section by giving the definition of a pre-Lie algebra.

\begin{definition}
A pre-Lie algebra is an algebra $(\mathcal{P}, \curvearrowright)$ over a field $\mathbf{k}$ of characteristic $0$, whose product satisfies the following relation for every $ x,y,z \in \mathcal{P} $ 
$$
x \curvearrowright (y \curvearrowright z) - (x \curvearrowright y) \curvearrowright z = y \curvearrowright (x \curvearrowright z) - (y \curvearrowright x) \curvearrowright z.
$$
\end{definition}

\begin{remark}
Note that every associative algebra is a pre-Lie algebra as in this case the associator vanishes and the left- and right-hand sides above are both equal to zero. 
\end{remark}

A pre-Lie algebra gives rise to a Lie algebra:

\begin{proposition}
If $(E, \curvearrowright)$ is a pre-Lie algebra, then the commutator $[x, y] = x \curvearrowright y - y \curvearrowright x$ is a Lie bracket.
\end{proposition}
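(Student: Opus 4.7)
The plan is to verify the two defining properties of a Lie bracket: antisymmetry and the Jacobi identity. Antisymmetry is immediate since $[x,y] = x \curvearrowright y - y \curvearrowright x = -(y \curvearrowright x - x \curvearrowright y) = -[y,x]$, and bilinearity is inherited from the bilinearity of $\curvearrowright$. So the only real content is the Jacobi identity
\begin{equation*}
[x,[y,z]] + [y,[z,x]] + [z,[x,y]] = 0.
\end{equation*}

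The key observation is that the pre-Lie axiom can be rephrased as saying that the associator
\begin{equation*}
a(x,y,z) := x \curvearrowright (y \curvearrowright z) - (x \curvearrowright y) \curvearrowright z
\end{equation*}
is symmetric in its first two arguments: $a(x,y,z) = a(y,x,z)$. I would make this reformulation explicit first, as it is the only algebraic input available.

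Next, I would expand a single cyclic term
\begin{equation*}
[x,[y,z]] = x \curvearrowright (y \curvearrowright z) - x \curvearrowright (z \curvearrowright y) - (y \curvearrowright z) \curvearrowright x + (z \curvearrowright y) \curvearrowright x,
\end{equation*}
and then use the associator to rewrite the two right-associated terms $(y \curvearrowright z) \curvearrowright x$ and $(z \curvearrowright y) \curvearrowright x$ as left-associated ones modulo associators. Writing out the analogous expansion for the other two cyclic permutations, the six ``fully left-associated'' terms of the form $u \curvearrowright (v \curvearrowright w)$ will cancel in pairs because each appears once with a plus and once with a minus sign after the cyclic sum. What remains is an expression that is a sum of six associators, which I then group into three pairs of the form $a(u,v,w) - a(v,u,w)$, each of which vanishes by the symmetry of the associator established at the start.

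There is no real obstacle here: the argument is purely formal and amounts to a bookkeeping exercise around the pre-Lie axiom. The only thing to be careful about is the sign accounting when substituting the associator rewrite, so I would present the calculation in a compact form using $a(\cdot,\cdot,\cdot)$ rather than expanding every term, which makes the cancellation transparent. This is a well-known result in the theory of pre-Lie algebras, included here for completeness before the more substantial developments in the sequel.
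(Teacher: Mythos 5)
Your proof is correct: antisymmetry and bilinearity are immediate, and your reduction of the Jacobi identity to the symmetry of the associator $a(x,y,z)$ in its first two arguments, followed by the pairwise cancellation of the left-associated terms under the cyclic sum, is the standard argument and the sign bookkeeping works out. The paper states this proposition without proof (it is a classical fact about pre-Lie algebras), so there is nothing to compare against; your write-up would serve as a complete proof if one were wanted.
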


\begin{remark}
Here is an equivalent definition: An algebra $(E, \curvearrowright)$ over a field $\mathbf{k}$ of characteristic $0$, whose commutator is a Lie bracket and left multiplication by $\curvearrowright$ gives a representation of the commutator Lie algebra.
\end{remark}

\begin{remark}
Not every Lie algebra comes from an associative algebra (with the commutator as its Lie bracket). For example, free Lie algebras do not arise from any associative algebra. They do, however, arise from free pre-Lie algebras, see \cite{cha10}. It is interesting to study the implications of a Lie algebra $L$ arising from a pre-Lie structure. An explicit recursive procedure, given by Guin and Oudom (see Theorem~\ref{Guin-Oudom}), for constructing an associative product on the symmetric space over $L$ is one implication. The free cocommutative coalgebra endowed with this associative product turns out to be a Hopf algebra that is isomorphic to the universal enveloping algebra of $L$. This can be seen as exploiting the pre-Lie structure to obtain extra information about the universal envelope of $L$.
\end{remark}

 We also give the definition of a multi-pre-Lie algebra first introduced in \cite{BCCH}. Although a seemingly richer structure, all the information can be condensed into a single pre-Lie algebra. It is nonetheless a useful notion when describing certain families of products.

\begin{definition}
A multi-pre-Lie algebra indexed by a set $E$ is a vector space $\mathcal{P}$ over a field $\mathbf{k}$ of characteristic $0$, endowed with a family $(\curvearrowright^{\alpha})_{\alpha \in E}$ of bilinear products such that for every $ x, y,z \in \mathcal{P} $
$$
x \curvearrowright^{a} (y \curvearrowright^{b} z) - (x \curvearrowright^{a} y) \curvearrowright^{b} z = y \curvearrowright^{b} (x \curvearrowright^{a} z) - (y \curvearrowright^{b} x) \curvearrowright^{a} z.
$$
As it is shown below (see \cite{Foi21}), one can summarise all the data of a multi-pre-Lie algebra into a single pre-Lie algebra.
\end{definition}
\begin{lemma}
If $\CP$ is a multi-pre-Lie algebra over a field $\mathbf{k}$ of characteristic $0$ and indexed by a set $E$, then $\CP \otimes \mathbf{k}E$ is a pre-Lie algebra when endowed with the product
\begin{equs}
(x \otimes a) \curvearrowright (y \otimes b) = (x \curvearrowright^{a} y) \otimes b
\end{equs}
for any $a, b \in E$ and for any $x,y,z \in \mathcal{P}$.
\end{lemma}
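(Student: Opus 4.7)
The plan is to verify the pre-Lie identity
\begin{equs}
u \curvearrowright (v \curvearrowright w) - (u \curvearrowright v) \curvearrowright w = v \curvearrowright (u \curvearrowright w) - (v \curvearrowright u) \curvearrowright w
\end{equs}
on elementary tensors $u = x \otimes a$, $v = y \otimes b$, $w = z \otimes c$ in $\CP \otimes \mathbf{k}E$, and then extend by bilinearity. The definition of the product has been cooked up so that the $\mathbf{k}E$ slot of the output only ever remembers the last label, while the $\CP$ slot collects an iterated multi-pre-Lie product indexed by the labels of the inputs. Hence both sides of the desired identity will be of the form $(\,\cdot\,) \otimes c$, where the inner expression in $\CP$ is exactly an instance of the multi-pre-Lie relation.

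More concretely, I will first compute
\begin{equs}
(x \otimes a) \curvearrowright \bigl( (y \otimes b) \curvearrowright (z \otimes c) \bigr)
&= (x \otimes a) \curvearrowright \bigl( (y \curvearrowright^{b} z) \otimes c \bigr) \\
&= \bigl( x \curvearrowright^{a} (y \curvearrowright^{b} z) \bigr) \otimes c,
\end{equs}
and analogously
\begin{equs}
\bigl( (x \otimes a) \curvearrowright (y \otimes b) \bigr) \curvearrowright (z \otimes c)
= \bigl( (x \curvearrowright^{a} y) \curvearrowright^{b} z \bigr) \otimes c.
\end{equs}
Taking the difference gives the associator of $\curvearrowright$ in $\CP \otimes \mathbf{k}E$ on the left; swapping $(x,a)$ with $(y,b)$ produces the right-hand side. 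The two coincide after tensoring with $c$ precisely because the multi-pre-Lie axiom
\begin{equs}
x \curvearrowright^{a} (y \curvearrowright^{b} z) - (x \curvearrowright^{a} y) \curvearrowright^{b} z = y \curvearrowright^{b} (x \curvearrowright^{a} z) - (y \curvearrowright^{b} x) \curvearrowright^{a} z
\end{equs}
holds in $\CP$. This establishes the identity on elementary tensors; bilinearity of $\curvearrowright$ then yields the identity on all of $\CP \otimes \mathbf{k}E$.

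There is no real obstacle here, since the proof is essentially a definition chase, and the construction of the product has been arranged so that the associators match up term by term. The only mild subtlety is to notice that the second tensor-slot label never interferes with the calculation: it passively propagates through from the rightmost argument, so the pre-Lie identity in the tensor product reduces cleanly to the multi-pre-Lie identity in $\CP$ itself.
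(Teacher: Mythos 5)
Your proof is correct: the direct computation on elementary tensors shows that the associator of the induced product on $\CP \otimes \mathbf{k}E$ equals the multi-pre-Lie associator tensored with the last label, so symmetry in the first two arguments follows immediately from the multi-pre-Lie axiom, and multilinearity of the identity justifies the reduction to elementary tensors. The paper itself omits the proof (citing Foissy), and your definition chase is exactly the standard argument one would supply.
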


\begin{example}\rm
 \label{rooted_r} A family of pre-Lie products on $ \mathcal{T}^V_E $ is given by grafting by means of decorated edges, namely:
\begin{equation}
	\label{grafting_a}
\sigma \curvearrowright^a \tau:=\sum_{v\in  N_{\tau} } \sigma \curvearrowright^a_v  \tau,
\end{equation}
where $\sigma $ and $\tau$ are two decorated rooted trees and where $\sigma \curvearrowright^a_v \tau$ is obtained by grafting the tree $\sigma$ on the tree $\tau$ at vertex $v$ by means of a new edge decorated by $a\in E$. 
\end{example}

Another example is a deformed version of the above family of grafting products.

\begin{example}

We suppose here that the vertices are decorated by elements of a monoid $\Omega$ and that $\Omega = \mathbb{N}^{d+1}$ here, endowed with componentwise addition. A grading is given by 
\begin{equs}
|\mathbf n|_{\s}:=s_0n_0+\cdots +s_{d}n_{d}
\end{equs}
 where $\mathfrak s:=(s_0,\ldots, s_{d})\in \mathbb{N}_{>0}^{d+1}$ is fixed.   We suppose that $V = S \times \mathbb{N}^{d+1}$ and $E = S' \times \mathbb{N}^{d+1}$  where $ S $ and $ S' $ are two finite sets. Then $\Omega$ acts freely on both $E$ and $V$ in a graded way. We denote by $+$ the addition in $\Omega$ as well as both actions of $\Omega$ on $E$ and $V$. A family of deformed grafting products on $\mathcal T_E^V$ is defined as follows:
\begin{equation} \label{deformation_preLie}
\sigma \widehat{\curvearrowright}^a \tau:=\sum_{v\in N_{\tau}}\sum_{\ell\in\N^{d+1}}{\Labn_v \choose \ell} \sigma  \curvearrowright_v^{a-\ell}(\uparrow_v^{-\ell} \tau).
\end{equation}
\label{deformed_grafting_a}
Here $ \Labn_v \in\mathbb{N}^{d+1}$ denotes the second component of the decoration at the vertex $ v $. The generic term is self-explanatory if there exists a (unique) pair $(b,\alpha)\in E\times V$ such that $a=\ell+b$ and $\Labn_v =\ell+\alpha$. It vanishes by convention if this condition is not satisfied. The operators $\uparrow_{v}^{\omega}$ act by adding $\omega$ to the decoration $\Labn_{v}$. We define the \textsl{grading} of a tree in $\mathcal T_E^V$  by the sum of the gradings of its edges given by $ |\cdot|_{\text{grad}} $:
\begin{equation} \label{grading}
|\tau|_{\text{grad}}:=\sum_{e\in E_{\tau}}\big|\Labe(e) \big|_{\s}.
\end{equation}
where $ \Labe(e) $ is the decoration of the edge $ e $.
Then, $ \widehat{\curvearrowright}^a $ is a deformation of $ \curvearrowright^a $ in the sense that:
\begin{equs}
\sigma \widehat{\curvearrowright}^a \tau = \sigma \curvearrowright^a \tau +
\hbox{ lower grading terms}.
\end{equs}
\end{example}

We now proceed to give the definitions of pre-Lie products that will be of interest to us. We first define the grafting product, a pre-Lie product on planted trees with edge and vertex decorations that gives rise to the free multi-pre-Lie algebra equipped with a family of pre-Lie products over a prescribed set of generators. We will then define a deformed version of this grafting product that is of greater interest to us, which turns out to be isomorphic to the original grafting product.

\begin{definition}[Grafting product]
The grafting product on the space $\mathcal{P}_{E}^{V}$ is defined as follows for planted trees and is then extended by linearity:  
\begin{equs} \label{def_pre_a}
\mathcal{I}_{a}(\sigma) \curvearrowright \mathcal{I}_{b}( \tau) =   \mathcal{I}_{b}( \sigma \curvearrowright^{a}  \tau)
\end{equs}
for any $a,b \in E$ and any $\tau, \sigma \in \mathcal{T}_{E}^{V}$.
\end{definition}

\begin{definition}[Deformed grafting product]
The deformed grafting product on the space $\mathcal{P}_{E}^{V}$ is defined as follows for planted trees and is then extended by linearity:  
\begin{equation}
\quad \mathcal{I}_{a}(\sigma) \, \wh\curvearrowright \, \mathcal{I}_{b}( \tau) =   \mathcal{I}_{b}( \sigma \wh\curvearrowright^{a}   \tau)
\end{equation}
for any $a,b \in E$ and any $\tau, \sigma \in \mathcal{T}_{E}^{V}$.
\end{definition}

\begin{remark}

The grafting and deformed grafting products are clearly pre-Lie products under the identification of the space $\CP_{E}^{V}$ with $\CT_{E}^{V} \otimes \mathbf{k}E$ by identifying an element $\CI_{a}(\tau)$ with $\tau \otimes a$.

\end{remark}

We will now introduce a deformed version of the original Butcher-Connes-Kreimer coproduct, which we call the deformed Butcher-Connes-Kreimer (DBCK) coproduct and denote by $\Delta_{\text{\tiny{DBCK}}}$.

\begin{definition}
We suppose that the set $V$ of vertex decorations coincides with the commutative monoid $\Omega$. Then, the deformed Butcher-Connes-Kreimer coproduct is defined by the maps $ \Delta_{\text{\tiny{DBCK}}} : S(\mathcal{P}_E^V ) \rightarrow S(\mathcal{P}_E^V ) \otimes S(\mathcal{P}_E^V )$ and $ \bar{\Delta}_{\text{\tiny{DBCK}}} : \mathcal{T}_E^V  \rightarrow S(\mathcal{P}_E^V ) \otimes \mathcal{T}_{E}^V   $ \label{delta_dck} \label{delta_dck_bar} defined recursively by:
\begin{equs} \label{recur_def_deltaDCK}
\begin{aligned}
\Delta_{\text{\tiny{DBCK}}}  \CI_a(\tau) & = 
\left( \id  \otimes \CI_a \right) \bar{\Delta}_{\text{\tiny{DBCK}}} \tau + \CI_a(\tau) \otimes \one,
\quad \bar{\Delta}_{\text{\tiny{DBCK}}} X^k  = \one \otimes X^{k} 
\\ \bar{\Delta}_{\text{\tiny{DBCK}}} \CI_a(\tau) & = \left( \id \otimes \CI_a \right)  \bar{\Delta}_{\text{\tiny{DBCK}}} \tau + \sum_{\ell \in \N^{d+1}} \frac{1}{\ell !} \CI_{a + \ell}(\tau) \otimes X^{\ell}.
\end{aligned}
\end{equs}
The map $ \Delta_{\text{\tiny{DBCK}}}  $ is extended using the product of $ S(\mathcal{T}_E^V) $. We use the tree product for extending the map $ \bar{\Delta}_{\text{\tiny{DCK}}} $. Here the tree product is the merging root product. It means that given two decorated trees, their tree product is equal to a new decorated tree obtained by identifying the roots of the two trees and adding decorations from the previous roots to the new root.
The infinite sum over $ \ell $ makes sense via a bigrading introduced in \cite[Section 2.3]{BHZ}.
\end{definition}

\begin{definition}
The Deformed Butcher-Connes-Kreimer (DBCK) Hopf algebra $\CH_{\text{\tiny{DBCK}}}$ is the graded bialgebra on $\CF_{E}^{V} = S(\mathcal{P}_E^V )$ equipped with the forest product (i.e. the product of the symmetric algebra) and the $\Delta_{\text{\tiny{DBCK}}}$ coproduct. As a graded, connected bialgebra, it is also a Hopf algebra.
\end{definition}

For a Hopf algebra, we shall denote the space of primitive elements of $\CH$ by $Prim(\CH)$. Note that $Prim(\CH)$ is a linear subspace of $\CH$. Equipping $Prim(\CH)$ with the commutator Lie bracket $[h_{1}, h_{2}] = h_{1}h_{2} - h_{2}h_{1}$ it is also a Lie algebra.
It is well-known that the primitive elements of a Hopf Algebra carry the structure of a Lie algebra. When $\CH$ is a cocommutative graded connected Hopf algebra with finite-dimensional graded components, the Milnor-Moore theorem tells us that $\CH \cong U(Prim(\CH))$, i.e. that $\CH$ is isomorphic as a Hopf algebra to the universal enveloping algebra over it's primitives. When $\CH$ is cofree-cocommutatve and right-sided, the primitive elements of $\CH$ admit a finer structure, that of a pre-Lie algebra. An explicit description of any Hopf algebra obeying these conditions via means of it's underlying pre-Lie algebra is given by the Guin-Oudom procedure \cite{Guin1,Guin2}, which gives a recursive construction of the algebra's associative product on the symmetric algebra $S(Prim(\CH))$ over the primitives:

\begin{theorem}[Guin-Oudom]
\label{Guin-Oudom}
Let $(\mathcal{P}, \triangleright )$ be a pre-Lie algebra and let $S(\mathcal{P})$ denote the symmetric space over the underlying vector space.
For every  $u,v, w \in S(\mathcal{P})$, $x, y  \in \mathcal{P}$ we start by extending the product $\triangleright$ on $S(\mathcal{P})$ as follows:
\begin{equs}\label{e::symmetric product}
\begin{aligned}
\one \triangleright w&= w, \quad 
u \triangleright \one =\one^{\star}(u),\\
w \triangleright uv &=\sum_{(w)} ( w^{(1)} \triangleright u)(w^{(2)} \triangleright  v),\\
 x v \triangleright y  &=  x \, \triangleright \,   ( v \triangleright y)  - (x \, \triangleright \, v) \triangleright y
 \end{aligned}
\end{equs}
 where $\one^\star$ stands for the counit, the summation over $(w)$ is shorthand for summing over the terms of the expansion for the shuffle coproduct $\Delta_{\shuffle}$ and where $ \triangleright $ is extended to $   \mathcal{P} \otimes S(\mathcal{P}) $ in the following way:
\begin{align*}
x \, \triangleright \, x_1\ldots x_k 
&=\sum_{i=1}^k x_1\ldots (x \, \triangleright \, x_i)\ldots x_k,
\end{align*}
with $ x_i \in  \mathcal{P} $. \label{star_0}
We now define the associative product $ \star $ as follows:
\begin{equs} \label{def_*}
w \star v = \sum_{(w)}\left(  (w^{(1)} \triangleright v) w^{(2)} \right) 
\end{equs}
Then, the associative product $\star$ on $S(\mathcal{P})$ is such that the Hopf algebra $(S(\mathcal{P}), \star, \Delta_{\shuffle})$ is isomorphic to the universal enveloping algebra $U(\mathcal{P})$ of the Lie algebra associated to $\mathcal{P}$, equipped with its standard Hopf-algebraic structure. Furthermore, the induced mapping from the category $\catname{PreLie}$ to the category $\catname{Hopf}$ of Hopf algebras is a functor. A morphism $\phi$ in $\catname{PreLie}$ is mapped to $S(\phi)$ where $S$ is the symmetric space functor.
\end{theorem}

Given a pre-Lie algebra $\mathcal{P}$ the Guin-Oudom procedure describes a way to impose a Hopf algebra structure on $S(\mathcal{P})$ by using the pre-Lie product to obtain an associative product on $S(\mathcal{P})$. This turns out to be isomorphic to $U(\mathcal{P})$. In fact, one obtains a functor from the category $\catname{PreLie}$ to the category $\catname{Hopf}$ of Hopf algebras, see the proof of Proposition 3.1 in \cite{Guin2}. Furthermore, Loday and Ronco, in \cite{LR} prove that this mapping is an equivalence of categories from $\catname{PreLie}$ to a certain category $\catname{CHA}$ of cofree-cocommutative right-sided combinatorial Hopf Algebras. For example, under this correspondence, the free pre-Lie algebra on one generator gives rise to the Grossman-Larson Hopf algebra. 

\begin{theorem}\label{Free pre-Lie -> Grossman - Larson} 
The grafting pre-Lie algebra with product $\curvearrowright$ and edge decorations from the set $E$ is the free pre-Lie algebra over $E$. Furthermore the product obtained by the Guin-Oudom construction above is the Grossman-Larson product and therefore $\CH_{\text{\tiny{GL}}} = (S(\CP_{E}^{V}), \star, \Delta_{\shuffle})$ is isomorphic to the universal enveloping algebra $U(\mathfrak{h})$, where $\mathfrak{h}$ is the Lie algebra induced by the grafting product $\curvearrowright$.
\end{theorem}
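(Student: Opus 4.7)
The statement has three assertions that I would address in sequence.

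For the freeness of $(\CP_E^V, \curvearrowright)$, I would invoke the Chapoton-Livernet theorem in its decorated multi-pre-Lie form. Chapoton and Livernet established that undecorated rooted trees with grafting constitute the free pre-Lie algebra on one generator. The decorated extension, due to Foissy, asserts that $(\mathcal{T}_E^V, (\curvearrowright^a)_{a \in E})$ is the free multi-pre-Lie algebra indexed by $E$ generated by the single-vertex trees labelled by $V$. Passing through the identification $\CP_E^V \simeq \mathcal{T}_E^V \otimes \mathbf{k}E$ noted after the deformed grafting definition, this translates into the claimed freeness of $(\CP_E^V, \curvearrowright)$ as a pre-Lie algebra. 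Concretely one verifies the universal property by recursively extending any generator-valued map via the rule $\phi(\sigma \curvearrowright^a \tau) = \phi(\sigma) \curvearrowright^a_A \phi(\tau)$, with well-definedness ensured by the multi-pre-Lie identity.

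To identify the Guin-Oudom product with the Grossman-Larson product $\star$ defined by $\sigma \star \tau = (\sigma \otimes \tau) \Delta_{\text{\tiny BCK}}$, I would proceed by induction on the total number of nodes. Writing the Guin-Oudom product temporarily as $\star'$ (defined through \eqref{e::symmetric product} and \eqref{def_*}), the base cases $\one \star' \tau = \tau$ and $\sigma \star' \one = \sigma$ are immediate. For the inductive step, I would expand $\sigma \star' \mathcal{I}_a(X^k \tau)$ using the $\bullet$-recursion, apply the compatibility of $\bullet$ with the symmetric product to bring the expression into Sweedler form, and match it term-by-term with the expansion of $(\sigma \otimes \mathcal{I}_a(X^k \tau)) \Delta_{\text{\tiny BCK}}$ produced by the recursive formula \eqref{recur_def_deltaCK}. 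The combinatorial core of this matching is a bijection between admissible cuts of $\mathcal{I}_a(X^k \tau)$ having certain primitive factors of $\sigma$ grafted above the cut, and the iterated Guin-Oudom expansion where those primitives are contracted into $\mathcal{I}_a(X^k \tau)$ via $\bullet$ while the remaining factors survive as disjoint trees in the output forest.

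The third assertion, that $\CH_{\text{\tiny GL}} = (S(\CP_E^V), \star, \Delta_{\shuffle})$ is isomorphic as a Hopf algebra to the universal enveloping algebra $U(\mathfrak{h})$, then follows immediately from Theorem~\ref{Guin-Oudom} applied to the pre-Lie algebra $(\CP_E^V, \curvearrowright)$. The main obstacle is the inductive matching in the second step: the bookkeeping of admissible cuts against the Guin-Oudom recursion must be carried out carefully when $\sigma$ decomposes as a product of several primitive planted trees, because the shuffle coproduct appearing in the definition of $\bullet$ distributes these factors among two tensor slots in precisely the way needed to mirror which primitives are grafted into $\mathcal{I}_a(X^k \tau)$ and which survive as roots of the resulting forest.
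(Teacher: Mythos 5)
Your plan assembles exactly the ingredients the paper relies on: the paper states this theorem without proof, justifying it by the preceding discussion of the Guin--Oudom functor and the Loday--Ronco correspondence together with the Chapoton/Foissy freeness results it cites, and by the duality between the Grossman--Larson product and $\Delta_{\text{\tiny{BCK}}}$. Your three steps (decorated Chapoton--Livernet freeness, inductive identification of the Guin--Oudom product with $\star$ via admissible cuts, and then Theorem~\ref{Guin-Oudom} for the enveloping-algebra statement) are a correct reconstruction of that standard argument, so the proposal is sound and follows essentially the same route.
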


We also have analogous results for the deformed grafting product:

\begin{theorem}\label{Deformed Grafting0}
The Hopf algebra $\CH_{\text{\tiny{DGL}}}$ = $(S(\CP_E^V), \tilde{\star}, \Delta_{\shuffle})$, where $\tilde{\star}$ is obtained from $\widehat{\curvearrowright}$ via the Guin-Oudom construction given above, is isomorphic to the universal enveloping algebra $U(\mathfrak{g})$, where $\mathfrak{g}$ is the commutator Lie algebra induced by $\widehat{\curvearrowright}$.
\end{theorem}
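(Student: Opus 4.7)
The claim is in essence a direct consequence of the Guin--Oudom theorem (Theorem~\ref{Guin-Oudom}) applied to the deformed grafting product, so the plan is to verify the hypotheses of that theorem and then invoke it. The only piece of genuine work is to confirm that $(\CP_E^V, \widehat{\curvearrowright})$ is an honest pre-Lie algebra; once this is in hand the Hopf-algebraic conclusion is automatic.

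First I would check the pre-Lie identity for $\widehat{\curvearrowright}$. Using the identification $\CP_E^V \cong \CT_E^V \otimes \mathbf{k}E$ via $\CI_a(\tau) \mapsto \tau\otimes a$ (as in the remark following the definition of the deformed grafting product), it is enough to show that the family $(\widehat{\curvearrowright}^a)_{a\in E}$ is a multi-pre-Lie family on $\CT_E^V$. This reduces to checking, for decorated trees $\sigma_1,\sigma_2,\tau$ and decorations $a,b\in E$, the symmetry
\begin{equs}
\sigma_1 \widehat{\curvearrowright}^a (\sigma_2 \widehat{\curvearrowright}^b \tau) - (\sigma_1 \widehat{\curvearrowright}^a \sigma_2) \widehat{\curvearrowright}^b \tau = \sigma_2 \widehat{\curvearrowright}^b (\sigma_1 \widehat{\curvearrowright}^a \tau) - (\sigma_2 \widehat{\curvearrowright}^b \sigma_1) \widehat{\curvearrowright}^a \tau.
\end{equs}
In both sides, the terms in which $\sigma_1$ and $\sigma_2$ are grafted onto distinct vertices of $\tau$ cancel pairwise (these are the ``disjoint'' contributions), leaving only the ``nested'' contributions in which one of the two is grafted onto the other, and these appear with equal coefficients on both sides after expanding the deformation via the binomial coefficients in~\eqref{deformation_preLie}. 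The combinatorial bookkeeping is streamlined by observing that the $\uparrow_v^{-\ell}$ operators only affect node decorations and therefore commute with the grafting positions, so the relevant identity reduces to the pre-Lie identity for the undeformed grafting together with Vandermonde-type manipulations of the binomial coefficients.

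Alternatively, and more cleanly, one can short-circuit this verification by appealing to the pre-Lie isomorphism $\Theta : (\CP_E^V,\curvearrowright) \to (\CP_E^V,\widehat{\curvearrowright})$ introduced in~\cite{BM22}: since pre-Lie isomorphisms transport pre-Lie structure, the fact that $\curvearrowright$ is pre-Lie (Theorem~\ref{Free pre-Lie -> Grossman - Larson}) immediately gives the pre-Lie identity for $\widehat{\curvearrowright}$.

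With the pre-Lie property established, I would apply Theorem~\ref{Guin-Oudom} with $\mathcal{P}=\CP_E^V$ and pre-Lie product $\widehat{\curvearrowright}$. The theorem produces an associative product on $S(\CP_E^V)$ via the recursive formula~\eqref{e::symmetric product}--\eqref{def_*}; by definition of $\tilde{\star}$ in the statement, this associative product coincides with $\tilde{\star}$. The theorem further asserts that $(S(\CP_E^V),\tilde{\star},\Delta_{\shuffle})$ is isomorphic as a Hopf algebra to $U(\mathfrak{g})$, where $\mathfrak{g}$ is the commutator Lie algebra of $(\CP_E^V,\widehat{\curvearrowright})$. This is exactly the content of Theorem~\ref{Deformed Grafting0}. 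As a sanity check, functoriality of the Guin--Oudom construction applied to $\Theta$ produces a Hopf algebra isomorphism $\Phi = S(\Theta) : \CH_{\text{\tiny{GL}}} \to \CH_{\text{\tiny{DGL}}}$, realising the left square in diagram~\eqref{diag_1}; this is the isomorphism that will be used in the next section. The main conceptual obstacle is therefore not the proof itself but the verification of the pre-Lie identity for $\widehat{\curvearrowright}$, which is the one nontrivial combinatorial input.
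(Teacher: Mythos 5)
Your proposal is correct and follows essentially the same route as the paper, which states this theorem without a separate proof as a direct application of Theorem~\ref{Guin-Oudom} to the deformed grafting pre-Lie algebra, the pre-Lie property of $\widehat{\curvearrowright}$ being taken from \cite{BM22} (and noted in the remark identifying $\CP_E^V$ with $\CT_E^V\otimes\mathbf{k}E$). Your transport-of-structure shortcut via $\Theta$ is exactly the justification the paper implicitly relies on, so the only difference is that you spell out the verification the paper leaves to the reference.
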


We shall denote by $\CH_{\text{\tiny{DGL}}}$ the deformed Grossman-Larson Hopf algebra. This is due to the following theorem \cite[Theorem 3.4]{BM22}:

\begin{theorem}
The product $\tilde{\star}$ is dual to the deformed Butcher-Connes-Kreimer coproduct $\Delta_{\text{\tiny{DBCK}}}$.
\end{theorem}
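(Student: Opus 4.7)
The plan is to establish the duality by induction on the grading, matching the recursive definition of $\Delta_{\text{\tiny{DBCK}}}$ with the Guin-Oudom recursion defining $\tilde\star$. Concretely, equip $S(\CP_E^V)$ with the symmetric pairing $\langle \sigma, \tau\rangle := S(\tau)\,\delta_{\sigma,\tau}$ on forests, and aim to prove
\begin{equs}
\langle \sigma \,\tilde\star\, \tau,\, \rho\rangle = \langle \sigma\otimes \tau,\, \Delta_{\text{\tiny{DBCK}}} \rho\rangle \quad \text{for all } \sigma,\tau,\rho \in S(\CP_E^V).
\end{equs}

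First, I would reduce the statement to the case $\rho = \CI_a(\tau)$ is a planted tree. Since $\Delta_{\text{\tiny{DBCK}}}$ is multiplicative on the forest product and $\tilde\star$ satisfies $w\star v = \sum_{(w)}(w^{(1)}\bullet v)w^{(2)}$ (so its dual respects the $\shuffle$-coproduct structure), standard Hopf-algebraic adjunction reduces the duality on forests to duality against single planted trees $\rho$, and then further to a recursion on the height/grading of $\rho$.

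Second, I would run the induction on $|\rho|_{\text{grad}}$. For the base case $\rho = \CI_a(X^k)$, both sides reduce to an elementary computation using $\bar\Delta_{\text{\tiny{DBCK}}} X^k = \one \otimes X^k$ and the fact that $\bullet$ vanishes when its right argument is too small. For the inductive step with $\rho = \CI_a(\tau')$, expand the right-hand side via the recursion
\begin{equs}
\bar\Delta_{\text{\tiny{DBCK}}} \CI_a(\tau') = (\id\otimes \CI_a)\bar\Delta_{\text{\tiny{DBCK}}} \tau' + \sum_{\ell\in\N^{d+1}} \tfrac{1}{\ell!} \CI_{a+\ell}(\tau') \otimes X^\ell,
\end{equs}
and simultaneously expand $\sigma\,\tilde\star\,\tau$ using $xv\bullet y = x\,\wh\curvearrowright\,(v\bullet y) - (x\,\wh\curvearrowright\, v)\bullet y$ together with the explicit form $\mathcal{I}_a(\rho_1)\,\wh\curvearrowright\,\mathcal{I}_b(\rho_2) = \mathcal{I}_b(\rho_1\,\wh\curvearrowright^a \rho_2)$. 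The first summand on each side matches by the inductive hypothesis applied to $\tau'$, so the whole argument reduces to showing that the deformation term $\sum_\ell \tfrac{1}{\ell!}\CI_{a+\ell}(\tau')\otimes X^\ell$ is exactly the dual contribution of the deformation $\sum_{v,\ell}\binom{\Labn_v}{\ell}\,\sigma\curvearrowright_v^{a-\ell}(\uparrow_v^{-\ell}\tau')$ present in $\wh\curvearrowright^a$ when paired against $\rho$.

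The main obstacle is precisely this combinatorial matching: the binomial coefficients $\binom{\Labn_v}{\ell}$ appearing in $\wh\curvearrowright^a$ must balance against the $\tfrac{1}{\ell!}$ in $\bar\Delta_{\text{\tiny{DBCK}}}$ and the symmetry factors $S(\tau)$ hidden in the pairing. The natural way to handle this is to choose the pairing so that $X^\ell$ is dual to $\ell!\, X^\ell/\ell!$ (absorbing factorials into the pairing), and then to observe that attaching $\sigma$ at vertex $v$ with shift $-\ell$ while lifting $v$ by $\ell$ is precisely the operation that is inverse, under the pairing, to cutting the subtree above a planted edge and redistributing the decoration $a+\ell$ into $a$ plus an $X^\ell$ polynomial. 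As a sanity check, taking $\ell=0$ recovers the classical duality between $\star$ and $\Delta_{\text{\tiny{BCK}}}$, which pins down the normalization. An alternative route, if the direct computation becomes cumbersome, is to apply the isomorphism $\Phi = S(\Theta)$ coming from the Guin-Oudom functor: one then only needs to verify that the adjoint of $\Phi$ intertwines $\Delta_{\text{\tiny{BCK}}}$ with $\Delta_{\text{\tiny{DBCK}}}$, transporting the known classical duality through the deformation.
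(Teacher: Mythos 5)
The paper offers no proof of this statement to compare against: it is imported verbatim as Theorem~3.4 of \cite{BM22}, and the surrounding text simply cites it. Judged on its own terms, your outline is the standard route (and essentially the one taken in \cite{BM22}): use the graded pairing weighted by symmetry factors, note that both the coproduct dual to $\tilde\star$ and $\Delta_{\text{\tiny{DBCK}}}$ are multiplicative for the forest product (the former because $(S(\CP_E^V),\tilde\star,\Delta_{\shuffle})$ is a bialgebra, so dualizing the compatibility of $\tilde\star$ with $\Delta_{\shuffle}$ gives multiplicativity of the dual coproduct), reduce to planted trees, and induct on the grading. Two points need tightening. First, the recursion you invoke passes through $\bar\Delta_{\text{\tiny{DBCK}}}\tau'$ where $\tau'=X^k\prod_i\mathcal{I}_{a_i}(\tau_i)$ is a general tree, not a planted one, so the induction must be run jointly for the pair $(\Delta_{\text{\tiny{DBCK}}},\bar\Delta_{\text{\tiny{DBCK}}})$, the dual of $\bar\Delta_{\text{\tiny{DBCK}}}$ being the extension of $\wh\curvearrowright^a$ acting on trees; as written your hypothesis only covers the planted case. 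Second, and more substantively, the entire content of the theorem sits in the step you defer: the identity balancing the coefficient $\binom{\Labn_v}{\ell}$ of $\sigma\curvearrowright_v^{a-\ell}(\uparrow_v^{-\ell}\tau')$ against the factor $\frac{1}{\ell!}$ in $\sum_\ell\frac{1}{\ell!}\mathcal{I}_{a+\ell}(\tau')\otimes X^\ell$, once the symmetry factors $S(\cdot)$ and the pairing $\langle X^\ell,X^\ell\rangle=\ell!$ on node decorations are accounted for. Your $\ell=0$ sanity check pins the normalization but does not establish the identity; a complete proof has to carry out this computation.

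The proposed fallback via $\Phi=S(\Theta)$ does not reduce the work. The intertwining $\Delta_{\text{\tiny{BCK}}}\circ\Phi^*=(\Phi^*\otimes\Phi^*)\circ\Delta_{\text{\tiny{DBCK}}}$ you would need is, by adjunction and the known duality of $\star$ with $\Delta_{\text{\tiny{BCK}}}$, logically equivalent to the theorem itself; verifying it independently requires an explicit handle on $\Phi^*$, hence on the recursively defined $\Theta$ of \eqref{recursive_theta}, and is a computation of the same order as the direct combinatorial check it is meant to avoid. It relocates the difficulty rather than removing it.
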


 In \cite[Theorem 2.7]{BM22}, the authors prove that there exists an isomorphism $\Theta$ between the pre-Lie algebra $E_{\text{\tiny{GL}}}$ associated with the Grossman-Larson Hopf algebra and the pre-Lie algebra $E_{\text{\tiny{DGL}}}$ associated with the deformed Grossman-Larson Hopf algebra. One can describe recursively the isomorphism $\Theta$ by 
\begin{equs} \label{recursive_theta}
\begin{aligned}
\Theta\left( \mathcal{I}_a(X^k) \right) & = \mathcal{I}_a(X^k) \\
\Theta \left( \mathcal{I}_a(X^k \prod_{i=1}^n \mathcal{I}_{a_i}(\tau_i)) \right) 
&  = \prod_{i=1}^n \Theta \left(  \mathcal{I}_{a_i}(\tau_i)) \right) \, \widehat{\curvearrowright} \,  \mathcal{I}_a(X^k).
\end{aligned}
\end{equs}
Using the isomorphism $\Theta$, together with Theorem~\ref{Guin-Oudom}, we can now prove:

\begin{theorem} \label{theorem_functor}
The deformed Grossman-Larson Hopf algebra $\CH_{\text{\tiny{DGL}}}$ is isomorphic as a Hopf algebra to the original Grossman-Larson Hopf algebra $\CH_{\text{\tiny{GL}}}$.
\end{theorem}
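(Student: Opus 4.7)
The plan is to invoke the functoriality of the Guin--Oudom construction (explicitly stated in Theorem~\ref{Guin-Oudom}, following Proposition~3.1 of \cite{Guin2}) applied to the pre-Lie isomorphism $\Theta$ between the grafting and the deformed grafting products, recalled in \eqref{recursive_theta}. Since both Hopf algebras $\CH_{\text{\tiny{GL}}}$ and $\CH_{\text{\tiny{DGL}}}$ arise as images of pre-Lie algebras under the Guin--Oudom functor, any pre-Lie isomorphism between the underlying pre-Lie algebras must lift to a Hopf algebra isomorphism between them.

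More concretely, I would proceed as follows. First, recall that the pre-Lie algebra underlying $\CH_{\text{\tiny{GL}}}$ is $(\CP_E^V, \curvearrowright)$ and the pre-Lie algebra underlying $\CH_{\text{\tiny{DGL}}}$ is $(\CP_E^V, \widehat{\curvearrowright})$. By \cite[Theorem~2.7]{BM22}, the map $\Theta$ defined in \eqref{recursive_theta} is an isomorphism in the category $\catname{PreLie}$. Second, define
\begin{equs}
\Phi := S(\Theta) : S(\CP_E^V) \longrightarrow S(\CP_E^V),
\end{equs}
that is, $\Phi$ is the symmetric-algebra extension of $\Theta$, which makes sense because as a linear map $\Theta$ is between the same underlying vector space $\CP_E^V$. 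Third, apply the Guin--Oudom functor: since $\Theta$ is a pre-Lie morphism, $\Phi$ is an algebra morphism from $(S(\CP_E^V), \star)$ to $(S(\CP_E^V), \tilde{\star})$. This can be verified directly by induction using the recursive definition \eqref{e::symmetric product} of the products $\bullet$ and $\star$ (respectively $\tilde{\bullet}$ and $\tilde{\star}$): the base cases are immediate, and the inductive step uses the pre-Lie compatibility $\Theta(x \curvearrowright y) = \Theta(x) \, \widehat{\curvearrowright} \, \Theta(y)$.

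For the coalgebra structure, one observes that the shuffle coproduct $\Delta_{\shuffle}$ is the unique cocommutative coproduct on the symmetric algebra $S(\CP_E^V)$ for which all elements of $\CP_E^V$ are primitive. Since $\Phi$ is the symmetric extension of a linear isomorphism of $\CP_E^V$, it automatically intertwines $\Delta_{\shuffle}$ with itself:
\begin{equs}
\Delta_{\shuffle} \circ \Phi = (\Phi \otimes \Phi) \circ \Delta_{\shuffle}.
\end{equs}
Combined with the previous step, this shows that $\Phi$ is a bialgebra morphism, hence (being invertible with inverse $S(\Theta^{-1})$) a Hopf algebra isomorphism.

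The only subtle point, which I would expect to be the main thing to check carefully, is the verification that $\Phi$ is indeed an algebra morphism for the Guin--Oudom products. Although this is formally guaranteed by functoriality of the construction, writing the induction requires tracking the interplay between the auxiliary product $\bullet$ and the extension of $\curvearrowright$ to $\CP_E^V \otimes S(\CP_E^V)$, and verifying that $\Phi$ respects both. Once this is done, all other pieces (coproduct compatibility, invertibility, and hence the conclusion) follow immediately, yielding the desired Hopf algebra isomorphism $\Phi : \CH_{\text{\tiny{GL}}} \to \CH_{\text{\tiny{DGL}}}$.
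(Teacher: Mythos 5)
Your proposal is correct and follows exactly the paper's argument: the paper likewise defines $\Phi := G(\Theta)$ as the image of the pre-Lie isomorphism $\Theta$ under the Guin--Oudom functor and concludes immediately by functoriality. The additional detail you give (that $G(\Theta) = S(\Theta)$, the induction for the algebra morphism property, and the compatibility with $\Delta_{\shuffle}$) simply unpacks what the paper delegates to Theorem~\ref{Guin-Oudom}.
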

\begin{proof}
Let $\Theta$ denote the isomorphism between the pre-Lie Algebras associated to $\CH_{\text{\tiny{GL}}}$ and $\CH_{\text{\tiny{DGL}}}$. We let $G: \catname{PreLie} \rightarrow \catname{Hopf}$ denote the Guin-Oudom functor. Then, $\Phi := G(\Theta)$ is an isomorphism between $\CH_{\text{\tiny{GL}}}$ and $\CH_{\text{\tiny{DGL}}}$ in the category of Hopf algebras.
\end{proof}

Then, our previously defined isomorphism $\Phi: \CH_{\text{\tiny{GL}}} \rightarrow \CH_{\text{\tiny{DGL}}}$ induces a linear isomorphism of the space of primitive elements for $\CH_{\text{\tiny{BCK}}}$, seen as a subspace of $\CH_{\text{\tiny{GL}}}$, onto its image. This allows us to prove the following theorem:

\begin{proposition} \label{prop_iso_tilde}
Let $\sigma \in \CH_{\text{\tiny{DGL}}}$. Then, there exists a subspace $\tilde{\CB} = \left\langle \sigma_{1}, \sigma_{2}, ... \right\rangle  \subseteq \CH_{\text{\tiny{DGL}}}$ such that 
\begin{equs}
\sigma = \sum_{R} \lambda_{R} \, \sigma_{r_{1}} \tilde{\star} ... \tilde{\star} \, \sigma_{r_{n}}
\end{equs}
with a unique such decomposition.
\end{proposition}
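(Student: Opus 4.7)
The plan is to transfer the Chapoton--Foissy decomposition from the classical Grossman--Larson Hopf algebra $\CH_{\text{\tiny{GL}}}$ to its deformed counterpart $\CH_{\text{\tiny{DGL}}}$ by pushing forward along the Hopf algebra isomorphism $\Phi : \CH_{\text{\tiny{GL}}} \to \CH_{\text{\tiny{DGL}}}$ constructed in Theorem~\ref{theorem_functor}. Since all the relevant data (existence of a suitable set of primitives, uniqueness of the expansion as a linear combination of iterated products of those primitives) is preserved by any Hopf algebra isomorphism, the statement will follow almost immediately from Theorem~\ref{Foissy_Chapoton_basis}.

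Concretely, I would proceed as follows. First, invoke Theorem~\ref{Foissy_Chapoton_basis} to obtain a set $B = \{\tau_1, \tau_2, \ldots\} \subseteq \CP_E^V$ of primitive elements of $\CH_{\text{\tiny{GL}}}$ such that every element $\tau \in \CH_{\text{\tiny{GL}}}$ admits a unique decomposition
\begin{equs}
\tau = \sum_{R} \lambda_R \, \tau_{r_1} \star \cdots \star \tau_{r_n},
\end{equs}
as in \eqref{uniquedec}. Next, define $\CB := \Phi(B) = \{\sigma_1, \sigma_2, \ldots\}$ with $\sigma_i := \Phi(\tau_i)$. Because $\Phi$ is an algebra morphism from $(\CH_{\text{\tiny{GL}}}, \star)$ to $(\CH_{\text{\tiny{DGL}}}, \tilde{\star})$, we have
\begin{equs}
\Phi\bigl(\tau_{r_1} \star \cdots \star \tau_{r_n}\bigr) = \sigma_{r_1} \,\tilde{\star}\, \cdots \,\tilde{\star}\, \sigma_{r_n}.
\end{equs}

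Given an arbitrary $\sigma \in \CH_{\text{\tiny{DGL}}}$, bijectivity of $\Phi$ yields a unique $\tau \in \CH_{\text{\tiny{GL}}}$ with $\sigma = \Phi(\tau)$. Decomposing $\tau$ with respect to $B$ and applying $\Phi$ produces the required decomposition of $\sigma$ with the same coefficients $\lambda_R$. Uniqueness is transported the same way: any two decompositions of $\sigma$ in terms of $\CB$ would pull back through $\Phi^{-1}$ to two decompositions of $\tau$ in terms of $B$, which must agree by the uniqueness part of Theorem~\ref{Foissy_Chapoton_basis}. There is no genuine obstacle here; the entire content of the proposition is packaged in the isomorphism $\Phi$ provided by Theorem~\ref{theorem_functor}, together with the Chapoton--Foissy result applied on the undeformed side. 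The only mild subtlety worth noting is that $\CB$ consists of primitive elements for $\CH_{\text{\tiny{DGL}}}$, since $\Phi$ is a coalgebra morphism and therefore preserves primitivity; this will be useful later when the basis $\CB$ is used in the recentering Hopf algebra.
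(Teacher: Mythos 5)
Your proof is correct and follows essentially the same route as the paper: pull $\sigma$ back through the isomorphism $\Phi$ of Theorem~\ref{theorem_functor}, decompose the preimage via Theorem~\ref{Foissy_Chapoton_basis}, and push the decomposition forward using that $\Phi$ intertwines $\star$ with $\tilde{\star}$. Your treatment of uniqueness and of the primitivity of $\CB$ is slightly more explicit than the paper's, but the argument is the same.
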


\begin{proof}
We pick the unique $\tau$ such that $\sigma = \Phi(\tau)$. We know that 
\begin{equs}
\tau = \sum_{R} \lambda_{R}\tau_{r_{1}} \star ... \star \tau_{r_{n}}
\end{equs}
 for some Butcher-Connes-Kreimer primitive elements $\tau_{r_{i}}$ belonging to $ B $. Hence, 
\begin{equs}
\sigma = \Phi(\tau) = \sum_{R} \lambda_{R}\Phi(\tau_{r_{1}}) \, \tilde{\star} ... \tilde{\star} \, \Phi(\tau_{r_{n}})
\end{equs}
We then pick $\sigma_{r_{i}} = \Phi(\tau_{r_{i}})$. This completes the proof by considering $ \tilde{\CB} =  \Phi(\CB)   $.
\end{proof}
From the previous proposition, given the basis $ B $, the basis one can use for $ \tilde{\star} $ is $ \Phi(B) = \left\lbrace \Phi(\tau_1), \Phi(\tau_2),...\right\rbrace  $. 
Then, one can exhibit an isomorphism $ \Psi_{\Phi} $ between the two spaces  $ S(\CP_{E}^{V}) $ and $ T(\Phi(\CB)) $ based on the basis of $ \Phi(\CB)$:
\begin{equs} \label{psi_phi}
\Psi_{\Phi} : \Phi(\tau_1) \ \tilde{\star} \ldots \tilde{\star} \ \Phi(\tau_r) \mapsto
\Phi(\tau_1) \otimes \ldots \otimes \Phi(\tau_r)
\end{equs}
where $ \Phi(\tau_1) \, \otimes \ldots \otimes \, \Phi(\tau_n) \in T(\Phi(\CB)) $. As a corollary, by composition of isomorphisms, we obtain:

\begin{theorem}\label{Main}
There exists a subspace $\CB = \langle\tau_1, \tau_2,...\rangle$ of $\CP_{E}^{V} $ such that $\CH_{\text{\tiny{DGL}}}$ is isomorphic as a Hopf algebra to the tensor Hopf algebra $(T(\CB), \otimes, \Delta_{\shuffle})$ endowed with the tensor product and the shuffle coproduct. 
\end{theorem}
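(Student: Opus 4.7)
The plan is to obtain the result as a direct composition of two Hopf algebra isomorphisms already at our disposal. First, Theorem~\ref{Foissy_Chapoton_basis} (Chapoton--Foissy) yields a subspace $\hat{\CB} \subset \CP_E^V$ of primitive elements of $\CH_{\text{\tiny{GL}}}$ together with a Hopf algebra isomorphism $\Psi_{\text{\tiny{CF}}} : \CH_{\text{\tiny{GL}}} \to (T(\hat{\CB}), \otimes, \Delta_{\shuffle})$ sending $\tau_1 \star \cdots \star \tau_r$ to $\tau_1 \otimes \cdots \otimes \tau_r$. Second, Theorem~\ref{theorem_functor} provides a Hopf algebra isomorphism $\Phi : \CH_{\text{\tiny{GL}}} \to \CH_{\text{\tiny{DGL}}}$, obtained as $\Phi = G(\Theta)$ where $G$ is the Guin--Oudom functor and $\Theta$ is the pre-Lie isomorphism recalled in \eqref{recursive_theta}.

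Next, I set $\CB := \Phi(\hat{\CB})$. Because $\Phi$ is a bialgebra morphism preserving the unit and counit, it sends primitive elements of $\CH_{\text{\tiny{GL}}}$ to primitive elements of $\CH_{\text{\tiny{DGL}}}$, so $\CB$ is a subspace of $\CP_E^V$ consisting of primitives. Being the restriction of a linear isomorphism, $\Phi|_{\hat{\CB}} : \hat{\CB} \to \CB$ is itself a linear isomorphism, which by functoriality of the tensor algebra construction (equipped with the shuffle Hopf algebra structure) lifts to a Hopf algebra isomorphism $T(\Phi|_{\hat{\CB}}) : (T(\hat{\CB}), \otimes, \Delta_{\shuffle}) \to (T(\CB), \otimes, \Delta_{\shuffle})$.

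I would then define the sought isomorphism as the composition
\begin{equation*}
\Psi_{\Phi} \;:=\; T(\Phi|_{\hat{\CB}}) \circ \Psi_{\text{\tiny{CF}}} \circ \Phi^{-1} : \CH_{\text{\tiny{DGL}}} \longrightarrow (T(\CB), \otimes, \Delta_{\shuffle}),
\end{equation*}
which, as a composition of three Hopf algebra isomorphisms, is itself a Hopf algebra isomorphism. To confirm the formula announced just above the statement, I take a typical element $\Phi(\tau_1)\,\tilde{\star}\,\cdots\,\tilde{\star}\,\Phi(\tau_r)$ with $\tau_i \in \hat{\CB}$ (Proposition~\ref{prop_iso_tilde} guarantees each element of $\CH_{\text{\tiny{DGL}}}$ admits such a unique decomposition): applying $\Phi^{-1}$ returns $\tau_1 \star \cdots \star \tau_r$, then $\Psi_{\text{\tiny{CF}}}$ returns $\tau_1 \otimes \cdots \otimes \tau_r$, and finally $T(\Phi|_{\hat{\CB}})$ returns $\Phi(\tau_1) \otimes \cdots \otimes \Phi(\tau_r)$, recovering the expression of $\Psi_{\Phi}$.

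There is no serious obstacle, since the heavy lifting has already been done upstream: the Chapoton--Foissy theorem supplies the primitive basis on the Grossman--Larson side, and Theorem~\ref{theorem_functor} transports this basis intact to the deformed side via $\Phi$. The only points that deserve a brief check are the preservation of primitives under $\Phi$ and the compatibility of the tensor algebra functor with the shuffle Hopf algebra structure; both are standard and formal, so the theorem follows as a clean corollary.
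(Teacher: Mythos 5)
Your proposal is correct and follows essentially the same route as the paper: the paper also takes $\CB = \Phi(\hat{\CB})$, invokes the unique decomposition of Proposition~\ref{prop_iso_tilde}, and obtains the isomorphism $\Psi_{\Phi}$ by composing the Chapoton--Foissy isomorphism with $\Phi$. Your explicit factorization $T(\Phi|_{\hat{\CB}}) \circ \Psi_{\text{\tiny{CF}}} \circ \Phi^{-1}$ just makes the paper's phrase ``by composition of isomorphisms'' precise.
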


\section{Extension of the Chapoton-Foissy isomorphism}

\label{section::4}

\ \ \ \ \ \ In this section, we prove our main result, Theorem~\ref{main_result_paper}, which asserts that the $\CH_{2}$ Hopf algebra, used in the context of regularity structures for recentering  the ensuing Taylor-type expansions around different points, is actually isomorphic to a simple quotient of the tensor Hopf algebra. This quotient comes from the Lie bracket between planted trees and extra elements $ X_i$ which are parts of $ \CH_2 $ but were absent in the previous section. The main difficulty is to check that the basis given by Theorem~\ref{Main} is stable under this quotient. This is proved in Proposition~\ref{stable_ideal} and relies on properties of the two derivations $\uparrow^i$ and $\mathcal{D}^i$. They commute with the isomorphism $\Psi_{\Phi}$, as shown in Proposition~\ref{conjecture_psi} and leave invariant the primitives of the Butcher-Connes-Kreimer Hopf algebra, see Corollary~\ref{primitive_BCK}. The formulation of the main result and its proof rely strongly on the post-Lie algebras introduced in \cite{BK} for describing $ \CH_2 $. Finally, we give a non-trivial extension of the Chapoton-Foissy isomorphism. This is a consequence of having better understood the two main algebraic components at play in the context of regularity structures currently used, which are the deformation and the post-Lie structure.

 We shall begin by introducing the concept of a post-Lie algebra, which generalizes that of a pre-Lie algebra. We also describe the recursive construction of an associative product on the universal envelope of a post-Lie algebra that directly generalizes the construction of Guin and Oudom. It was first introduced in \cite{ELM}.

\begin{definition} \label{definition_post_lie}
A post-Lie algebra is a Lie algebra $ (\mathfrak{g}, [.,.]) $ equipped with a bilinear product $ \triangleright $ satisfying the following identities:
\begin{equs} \label{ident1}
\begin{aligned}
x \triangleright [y,z] & = [x \triangleright y,z] + [y, x \triangleright z] \\
[x,y] \triangleright z & = a_{\triangleright}(x,y,z) - a_{\triangleright}(y,x,z)
\end{aligned}
\end{equs}
with $ x,y,x \in \mathfrak{g} $ and the commutator $ a_{\triangleright}(x,y,z) $ is given by:
\begin{equs}
a_{\triangleright}(x,y,z) = x \triangleright (  y  \triangleright z ) - (x \triangleright y) \triangleright z.
\end{equs}
\end{definition}
When $ (\mathfrak{g}, [.,.]) $ is the abelian Lie algebra, we obtain the notion of a pre-Lie algebra. One can define a new Lie bracket $[[ .,.]]$ given by:
\begin{equation} \label{bracket}
[[x,y]] = [x,y] + x \triangleright y - y \triangleright x.
\end{equation}
The post-Lie product $ \triangleright $  can be extended to a product on the universal enveloping algebra $ U(\mathfrak{g}) $ by first defining it on $ \mathfrak{g}  \otimes U(\mathfrak{g})$:
\begin{equs}
x \triangleright \one = 0, \quad x \triangleright y_1 ... y_n = \sum_{i=1}^n y_1 ... (x  \triangleright y_i) ... y_n.
\end{equs}
and then extending it to $ U(\mathfrak{g}) \otimes U(\mathfrak{g}) $ by defining:
\begin{equs}
\one  \triangleright A & = A, \quad
x A  \triangleright y = x \triangleright (A \triangleright y) -(x \triangleright A) \triangleright y, \\
A \triangleright B C & = \sum_{(A)} (A^{(1)} \triangleright B)(A^{(2)} \triangleright C).
\end{equs}
where $ A, B, C \in U(\mathfrak{g}) $ and $ x, y \in \mathfrak{g} $.  Here, $ (A) $ correspond to the deshuffle coproduct. One defines an associative product $*$ on $ U(\mathfrak{g})$, the universal enveloping algebra of $\mathfrak{g}$:
\begin{equs} \label{product_1}
A * B = \sum_{(A)} A^{(1)} (A^{(2)} \triangleright B).
\end{equs}
Then, a result that generalizes that of Guin and Oudom, allows us to exploit the underlying post-Lie structure on $\mathfrak{g}$ in order to gain additional insight on the structure of $U(\mathfrak{g})$. This is formalised in the following theorem:

\begin{theorem} \label{main_theorem_section_2}
The Hopf algebra $ (U(\mathfrak{g}),*,\Delta_{\shuffle}) $ is isomorphic to the enveloping
algebra $ U(\bar{\mathfrak{g}}) $ where $ \bar{\mathfrak{g}} $ is the Lie algebra equipped with the Lie bracket $ [[.,.]] $.
\end{theorem}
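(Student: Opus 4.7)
The plan is to construct a Hopf algebra isomorphism $\Psi: U(\bar{\mathfrak{g}}) \to (U(\mathfrak{g}), *, \Delta)$ via the universal property of the enveloping algebra, and then verify bijectivity through a Poincar\'e--Birkhoff--Witt filtration argument, following the same blueprint as the pre-Lie Guin--Oudom result quoted as Theorem~\ref{Guin-Oudom}.

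The first step is to check that the bracket $[[.,.]]$ defined in \eqref{bracket} satisfies the Jacobi identity on $\mathfrak{g}$. This is a direct computation from the two post-Lie axioms \eqref{ident1}: expanding the Jacobiator of $[[.,.]]$ and grouping the resulting terms according to their ``type'' (pure brackets $[\cdot,\cdot]$, mixed terms combining $[\cdot,\cdot]$ with a single $\triangleright$, and terms involving only $\triangleright$), the first group vanishes by the Jacobi identity for $[\cdot,\cdot]$, the second group by the first identity in \eqref{ident1}, and the third group by antisymmetrising the second identity in \eqref{ident1}.

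Next I compute, for $x,y\in\mathfrak{g}\subset U(\mathfrak{g})$, the commutator under $*$. Since elements of $\mathfrak{g}$ are primitive, $\Delta(x)=x\otimes\one + \one\otimes x$, and formula \eqref{product_1} together with $\one\triangleright y = y$ gives $x*y = xy + x\triangleright y$. Consequently $x*y - y*x = [x,y] + x\triangleright y - y\triangleright x = [[x,y]]$, so the inclusion $\mathfrak{g}\hookrightarrow (U(\mathfrak{g}),*)$ is a morphism of Lie algebras from $\bar{\mathfrak{g}}$ into the commutator Lie algebra of $(U(\mathfrak{g}),*)$. The universal property of $U(\bar{\mathfrak{g}})$ then produces a unique unital algebra morphism $\Psi: U(\bar{\mathfrak{g}}) \to (U(\mathfrak{g}),*)$ extending this inclusion. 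Compatibility with coproducts follows at once: $\Delta$ makes every $x\in\mathfrak{g}$ primitive on both sides, so $\Psi$ intertwines the two coproducts on the generators of $U(\bar{\mathfrak{g}})$, and since both coproducts are algebra morphisms for their respective products the equality extends to all of $U(\bar{\mathfrak{g}})$.

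The main obstacle is bijectivity of $\Psi$, and it is handled by a filtration argument. I endow both Hopf algebras with their standard PBW filtrations. Because $A\triangleright B$ does not increase the filtration degree of $B$ (the extension of $\triangleright$ defined just before \eqref{product_1} acts as a derivation on the second argument), the formula $A*B = \sum_{(A)} A^{(1)}(A^{(2)}\triangleright B)$ has filtration degree at most $\deg A + \deg B$, with leading term $AB$. Hence $\Psi$ is filtration-preserving, and the induced map on associated graded pieces is the canonical symmetric-algebra isomorphism $S(\mathfrak{g}) \to S(\mathfrak{g})$ coming from the PBW theorem applied on either side. Therefore $\mathrm{gr}(\Psi)$ is bijective, and so is $\Psi$, which completes the proof.
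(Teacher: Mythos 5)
Your proof is correct, but note that the paper itself does not prove this statement: Theorem~\ref{main_theorem_section_2} is quoted as a known result from \cite{ELM} (the post-Lie generalisation of the Guin--Oudom theorem), so there is no in-paper argument to compare against. What you have written is essentially the proof from that reference: the Jacobi identity for $[[\cdot,\cdot]]$ deduced from the two post-Lie axioms \eqref{ident1}; the computation $x*y=xy+x\triangleright y$ for $x,y\in\mathfrak{g}$ primitive, giving $x*y-y*x=[[x,y]]$; the algebra morphism $\Psi\colon U(\bar{\mathfrak{g}})\to(U(\mathfrak{g}),*)$ obtained from the universal property; and bijectivity via the PBW filtration, using that $A\triangleright B$ does not raise the filtration degree of $B$, so that $A*B=AB+(\text{lower order})$ and $\mathrm{gr}(\Psi)$ is the identity of $S(\mathfrak{g})$. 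The one step you assert rather than establish is that $\Delta$ is multiplicative for $*$, i.e.\ $\Delta(A*B)=\Delta(A)*\Delta(B)$, which you need in order to upgrade the coproduct-intertwining from the generators to all of $U(\bar{\mathfrak{g}})$. This reduces to the compatibility $\Delta(A\triangleright B)=\sum\bigl(A^{(1)}\triangleright B^{(1)}\bigr)\otimes\bigl(A^{(2)}\triangleright B^{(2)}\bigr)$ for the extended product $\triangleright$, proved by a short induction from the fact that $x\triangleright\cdot$ is a coderivation for $x\in\mathfrak{g}$; since the statement of the theorem already presupposes that $(U(\mathfrak{g}),*,\Delta)$ is a Hopf algebra, this is a forgivable omission, but a self-contained proof should include it. A minor imprecision: the extension of $\triangleright$ is a derivation in its second argument only when the first argument lies in $\mathfrak{g}$; for general $A$ one has the measuring property $A\triangleright BC=\sum(A^{(1)}\triangleright B)(A^{(2)}\triangleright C)$, though the degree bound you need still follows by induction on the length of $A$.
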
 

This result has been used in \cite{BK}, in the context of regularity structures, in order to show that the $\star_{2}$ product, dual to the $\Delta_{2}$ coproduct appearing in \cite{BM22} and introduced in \cite{reg,BHZ}, comes directly from a post-Lie product by applying the above procedure. Below, we briefly recall this.    
We define the following spaces:
\begin{equs}
\mathcal{V} & = \Big \langle  \{ \CI_a(\tau), \, a \in \mathbb{N}^{d+1}, \, \tau \in \mathcal{T}^V_E \} \cup \{ X_{i} \}_{i = 0,..., d} \Big \rangle_{\mathbb{R}}, \\
\tilde{\mathcal{V}} & = \Big \langle  \CI_a(\tau), \, a \in \mathbb{N}^{d+1}, \, \tau \in \mathcal{T}^V_E \Big \rangle_{\mathbb{R}}.
\end{equs}
We denote by $\uparrow^{k}_{v}$ the operator acting on decorated trees by adding $k$ to the decoration of the node $v$.
We then define, for a tree $\tau \in \CT_{E}^{V}$ the operator $ \uparrow^{i} $ as follows:
\begin{equs}
\uparrow^{i} \tau  = \sum_{v \in N_{\tau}} \uparrow^{e_i}_v \tau.  
\end{equs}
This operator acts as a derivation on the multi-pre-Lie algebra of grafting products in the sense that:
\begin{equs} \label{derivation_node}
 \uparrow^{i} \left( \sigma \curvearrowright^a \tau  \right) =  (\uparrow^{i} \sigma) \curvearrowright^a  \tau +  \sigma \curvearrowright^a \, ( \uparrow^{i} \tau).
\end{equs}
 The derivation property \eqref{derivation_node} is not preserved under the deformation. One has the following identity similar to \cite[Proposition 4.4]{BK}.
\begin{equs} \label{non_commutation}
\uparrow^{i} \left( \sigma \widehat{\curvearrowright}^a \tau  \right) =   ( \uparrow^{i} \sigma  ) \, \widehat{\curvearrowright}^a \,   \tau  +   \sigma \, \widehat{\curvearrowright}^a \, ( \uparrow^{i} \tau)  - \sigma \, \widehat{\curvearrowright}^{a-e_i} \tau, 
\end{equs}
for all decorated trees $ \sigma, \tau $ and $ a \in \mathbb{N}^{d+1} $, $ i \in \lbrace 0,...,d \rbrace $. Looking at the above formula, one observes that the pair of operators $\tau \mapsto \sigma \widehat{\curvearrowright}^a \tau$ and $\tau \mapsto  \uparrow^{i} \tau$ does not satisfy the commutativity relation satisfied by the operators $\tau \mapsto \sigma \curvearrowright^{a} \tau$ and $\tau \mapsto  \uparrow^{i} \tau$. The non-commutative relation \ref{non_commutation} motivates the introduction of a Lie bracket together with a product that is a derivation for that bracket, that encode these relations in the form of a post-Lie algebra.
We begin by introducing a product $ \widehat{\triangleright} $ on $\mathcal{V}$:
\begin{equs} \label{prodcut_post_Lie}
X_i \, \widehat{\triangleright}  \,  \CI_{a}(\tau) & = \CI_{a}( \uparrow^i \tau), \quad \CI_{a}(\tau) \,  \widehat{\triangleright}  \, X_{i} = X_i \, \widehat{\triangleright}  \, X_{j} = 0
\\
\CI_{a}(\sigma) \, \widehat{\triangleright}  \, \CI_{b}(\tau) & = \CI_{a}(\sigma) \, \widehat{\curvearrowright} \, \CI_{b}(\sigma).
\end{equs}
In the sequel, we will use the notation $\uparrow^i \CI_{a}(  \tau) $ for $ \CI_{a}( \uparrow^i \tau) $.
We now proceed to define the appropriate Lie bracket, motivated by \ref{non_commutation}:

\begin{definition} \label{def_lie_trees}
We define the Lie bracket on $\mathcal{V}$ as $[x, y]_{0} = 0$ for $x, y \in \tilde{\mathcal{V}}$, $[x, y]_{0} = 0$ for $x, y \in \langle \ X_{i} \ \rangle_{\mathbb{R}}$ and as
\begin{equation}
[\CI_a(\tau),X_i]_0 =  \CI_{a-e_i}(\tau) 
\end{equation}
\end{definition}
With these definitions at hand, we have the following theorem (see \cite[Theorem 4.4]{BK}):
\begin{theorem}
The triple $( \mathcal{V}, [.,.]_{0}, \widehat{\triangleright})$ is a post-Lie algebra.
\end{theorem}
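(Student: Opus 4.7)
The plan is to verify in turn the three conditions for a post-Lie algebra in Definition~\ref{definition_post_lie}: that $[\cdot,\cdot]_0$ is a Lie bracket on $\mathcal{V}$, that $\widehat{\triangleright}$ is a derivation for this bracket (the first identity in \eqref{ident1}), and that the associator $a_{\widehat{\triangleright}}$ satisfies the post-Lie identity (the second identity in \eqref{ident1}). Because both $[\cdot,\cdot]_0$ and $\widehat{\triangleright}$ are defined piecewise on the two generating families of $\mathcal{V}$, namely the planted trees $\tilde{\mathcal{V}}$ and the polynomial generators $\langle X_i\rangle$, each axiom reduces to a short case analysis on the types of its arguments.

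For the Lie structure, antisymmetry is built into Definition~\ref{def_lie_trees}. The Jacobi identity is trivial except when the triple $(x,y,z)$ consists of one planted tree and two variables $X_i, X_j$, and in that case a direct computation using $[X_j,\mathcal{I}_{a-e_i}(\tau)]_0 = -\mathcal{I}_{a-e_i-e_j}(\tau)$ shows that the two nonzero cyclic terms cancel. For the derivation axiom $x\,\widehat{\triangleright}\,[y,z]_0 = [x\,\widehat{\triangleright}\,y,z]_0 + [y,x\,\widehat{\triangleright}\,z]_0$, the bracket $[y,z]_0$ vanishes unless exactly one of $y,z$ is a planted tree and the other is some $X_i$; in that single remaining case both sides evaluate, via \eqref{prodcut_post_Lie}, to the same expression $\mathcal{I}_{b-e_i}$ applied to $\uparrow^j\tau$ (when $x=X_j$) or to $\sigma\,\widehat{\curvearrowright}^a\tau$ (when $x=\mathcal{I}_a(\sigma)$).

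The post-Lie identity is the heart of the argument and the main obstacle. When $x,y\in\tilde{\mathcal{V}}$, the left-hand side vanishes and the identity reduces to symmetry of the associator of $\widehat{\curvearrowright}$ in its first two slots, which is the pre-Lie axiom already satisfied by the deformed grafting. When $x,y\in\langle X_i\rangle$, both sides vanish because $\uparrow^i$ and $\uparrow^j$ commute as node-decoration increments. The genuinely new case is $x=\mathcal{I}_a(\sigma)$, $y=X_i$, $z=\mathcal{I}_b(\tau)$: the left-hand side equals $\mathcal{I}_b(\sigma\,\widehat{\curvearrowright}^{a-e_i}\tau)$, whereas the two associators expand to $a_{\widehat{\triangleright}}(x,y,z) = \mathcal{I}_b(\sigma\,\widehat{\curvearrowright}^{a}(\uparrow^i\tau))$ and $a_{\widehat{\triangleright}}(y,x,z) = \mathcal{I}_b\bigl(\uparrow^i(\sigma\,\widehat{\curvearrowright}^{a}\tau)-(\uparrow^i\sigma)\,\widehat{\curvearrowright}^{a}\tau\bigr)$. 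Subtracting and applying the non-commutation identity \eqref{non_commutation} collapses the difference to exactly $\mathcal{I}_b(\sigma\,\widehat{\curvearrowright}^{a-e_i}\tau)$. This is the unique place where the deformation correction $-\sigma\,\widehat{\curvearrowright}^{a-e_i}\tau$ appearing in \eqref{non_commutation} is essential, and it is precisely this correction that forces the departure from a pre-Lie structure and motivates the introduction of the bracket $[\cdot,\cdot]_0$. The remaining configurations, which all feature $z = X_k$, are dispatched in one line because $\widehat{\triangleright}$ vanishes whenever its second argument is a variable.
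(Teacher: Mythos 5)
Your verification is correct, and it is worth noting that the paper itself does not prove this statement: it imports it wholesale as Theorem 4.4 of \cite{BK}. Your case analysis on the generators $\mathcal{I}_a(\tau)$ and $X_i$ is the natural one given that both $[\cdot,\cdot]_0$ and $\widehat{\triangleright}$ are defined piecewise, and every case checks out. In particular the key computation is right: for $x=\mathcal{I}_a(\sigma)$, $y=X_i$, $z=\mathcal{I}_b(\tau)$ one gets
\begin{equs}
a_{\widehat{\triangleright}}(x,y,z)-a_{\widehat{\triangleright}}(y,x,z)
= \mathcal{I}_b\Bigl(\sigma\,\widehat{\curvearrowright}^{a}(\uparrow^i\tau)
-\uparrow^i\bigl(\sigma\,\widehat{\curvearrowright}^{a}\tau\bigr)
+(\uparrow^i\sigma)\,\widehat{\curvearrowright}^{a}\tau\Bigr)
= \mathcal{I}_b\bigl(\sigma\,\widehat{\curvearrowright}^{a-e_i}\tau\bigr)
\end{equs}
by \eqref{non_commutation}, which matches $[\mathcal{I}_a(\sigma),X_i]_0\,\widehat{\triangleright}\,\mathcal{I}_b(\tau)$; and the cases $x,y\in\tilde{\mathcal{V}}$ and $x,y\in\langle X_i\rangle$ reduce, as you say, to the pre-Lie property of $\widehat{\curvearrowright}$ and to the commutation of $\uparrow^i$ with $\uparrow^j$ respectively. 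Two small points you leave implicit but should state: the convention that $\mathcal{I}_{a-e_i}(\tau)$ and $\sigma\,\widehat{\curvearrowright}^{a-e_i}\tau$ vanish when $a-e_i\notin\mathbb{N}^{d+1}$ (this is what makes the Jacobi identity and the mixed post-Lie case consistent at the boundary), and the fact that it suffices to check all identities on generators since every map involved is (multi)linear and $\mathcal{V}$ is merely a linear span of the two families. Neither is a gap, just a sentence each.
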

The bracket induced by the post-Lie algebra encodes all the (non-)commutativity relations between operators acting on decorated trees. However, most of these actually commute with one another, forming a pre-Lie algebra that lives inside the Lie algebra $\tilde{V}$. The extra post-Lie structure allows one to, roughly speaking, split the bracket into a commutative and non-commutative part. Hence the non-commutativity relations are actually encoded more succinctly by the Lie  bracket  $[.,.]_{0}$.

We denoted by $ U(\mathcal{V}_0) $ the enveloping algebra with the Lie bracket $ [.,.]_0 $ and  by $  U(\mathcal{V}) $ the enveloping algebra with the Lie bracket $ [[.,.]] $. We also set $ * $ to be the product obtained by the generalization of the Guin-Oudom procedure given in \eqref{product_1}. As a mere application of Theorem~\ref{main_theorem_section_2}, one gets

\begin{theorem} \label{main_result_trees}
The Hopf algebra $U(\mathcal{V})$ is isomorphic to the Hopf algebra $(U(\mathcal{V}_0), *, \Delta_{\shuffle})$.
\end{theorem}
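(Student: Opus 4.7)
The statement is presented as a direct consequence of Theorem~\ref{main_theorem_section_2}, so the plan is essentially to check that the hypotheses of that theorem apply verbatim to the data $(\mathcal{V}, [.,.]_{0}, \widehat{\triangleright})$ and to identify the two Lie brackets involved. First I would invoke the previous theorem of the section, which asserts that $(\mathcal{V}, [.,.]_{0}, \widehat{\triangleright})$ is a post-Lie algebra. The generalised Guin-Oudom procedure recalled in \eqref{product_1} then extends $\widehat{\triangleright}$ to $U(\mathcal{V}_{0}) \otimes U(\mathcal{V}_{0})$, and the resulting associative product $*$ together with the shuffle coproduct $\Delta$ gives a Hopf algebra structure on $U(\mathcal{V}_{0})$. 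Theorem~\ref{main_theorem_section_2} then directly asserts that this Hopf algebra is isomorphic to $U(\bar{\mathcal{V}})$, where $\bar{\mathcal{V}}$ denotes $\mathcal{V}$ equipped with the induced Lie bracket $[[.,.]]$ defined in \eqref{bracket}.

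The only real content of the proof is therefore the identification $\bar{\mathcal{V}} = \mathcal{V}$ as Lie algebras, i.e.\ the verification that the bracket $[[x,y]] = [x,y]_{0} + x \widehat{\triangleright} y - y \widehat{\triangleright} x$ produced by the post-Lie structure is the Lie bracket intrinsically carried by $\mathcal{V}$. For this I would check the three cases corresponding to the generators. For $X_{i}, X_{j}$ both brackets and both post-Lie products vanish, giving $[[X_{i}, X_{j}]] = 0$. For two planted generators $\CI_{a}(\sigma)$ and $\CI_{b}(\tau)$, the bracket $[.,.]_{0}$ vanishes by Definition~\ref{def_lie_trees} and $\widehat{\triangleright}$ reduces to $\widehat{\curvearrowright}$ by \eqref{prodcut_post_Lie}, so $[[\CI_{a}(\sigma), \CI_{b}(\tau)]]$ equals the commutator of the deformed grafting product, which is exactly the Lie bracket appearing in $\mathcal{V}$. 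For the mixed case, the bracket $[X_{i}, \CI_{a}(\tau)]_{0} = -\CI_{a-e_{i}}(\tau)$ combines with $X_{i} \widehat{\triangleright} \CI_{a}(\tau) = \CI_{a}(\uparrow^{i} \tau)$ and $\CI_{a}(\tau) \widehat{\triangleright} X_{i} = 0$ to give $[[X_{i}, \CI_{a}(\tau)]] = \CI_{a}(\uparrow^{i} \tau) - \CI_{a-e_{i}}(\tau)$, which is the expected commutator reflecting the non-commutation relation \eqref{non_commutation}.

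Once these identifications are in place, the isomorphism follows by transporting the Guin-Oudom isomorphism from Theorem~\ref{main_theorem_section_2}. The main conceptual point, rather than an obstacle, is to recognise that the post-Lie formalism was designed precisely so as to peel off the pre-Lie part of the structure from the genuinely non-commutative part encoded by $[.,.]_{0}$; the commutativity/non-commutativity between grafting and the $\uparrow^{i}$ derivation is then absorbed cleanly into the induced bracket $[[.,.]]$. There is nothing further to compute, since the Hopf-algebra isomorphism, its compatibility with the coproducts and the identification of the underlying Lie algebra are all provided by Theorem~\ref{main_theorem_section_2}.
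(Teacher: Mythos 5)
Your proposal is correct and follows exactly the route the paper intends: the paper offers no written proof beyond declaring the theorem ``a mere application'' of Theorem~\ref{main_theorem_section_2}, applied to the post-Lie algebra $(\mathcal{V},[.,.]_0,\widehat{\triangleright})$ established just before. Note only that in the paper $U(\mathcal{V})$ is \emph{defined} as the enveloping algebra of $\mathcal{V}$ with the bracket $[[.,.]]$, so your case-by-case identification of $[[.,.]]$ with an ``intrinsic'' bracket, while correct (including the sign in $[[X_i,\CI_a(\tau)]]=\CI_a(\uparrow^i\tau)-\CI_{a-e_i}(\tau)$), is definitional rather than an extra step to verify.
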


Then, the main result of \cite{BK} is

\begin{theorem}
The Hopf algebra $(U(\mathcal{V}_0), *, \Delta_{\shuffle})$ is isomorphic to the Hopf algebra $\CH_{2} = (\CT_{E}^{V}, \star_{2}, \Delta_{\shuffle})$ as presented in \cite{BM22}.
\end{theorem}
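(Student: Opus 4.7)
The plan is to construct an explicit Hopf algebra isomorphism $\Psi_0 \colon U(\mathcal{V}_0) \to \CH_2$ and verify compatibility with both the algebra and coalgebra structure. First I would define $\Psi_0$ as a linear map by means of the PBW theorem applied to $U(\mathcal{V}_0)$. Since $[.,.]_0$ (Definition~\ref{def_lie_trees}) vanishes on $\tilde{\mathcal{V}}$ and on $\langle X_0,\ldots,X_d\rangle_{\mathbb{R}}$, and is nonzero only on the cross terms via $[\CI_a(\tau), X_i]_0 = \CI_{a-e_i}(\tau)$, an ordered PBW basis can be chosen in which planted trees precede polynomial generators. A basis monomial $\CI_{a_1}(\tau_1)\cdots\CI_{a_n}(\tau_n)\, X^k \in U(\mathcal{V}_0)$ would then be sent to the decorated tree obtained by grafting these planted trees onto a common root carrying the polynomial decoration encoded by $k$. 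This yields a linear bijection onto $\CT_E^V$.

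Next I would check that $\Psi_0$ intertwines the products $*$ and $\star_2$. The generalised Guin-Oudom product reads
\begin{equs}
A * B = \sum_{(A)} A^{(1)} \bigl(A^{(2)} \, \widehat{\triangleright}\, B\bigr),
\end{equs}
and using \eqref{prodcut_post_Lie} on generators one obtains $X_i * \CI_a(\tau) = X_i\, \CI_a(\tau) + \CI_a(\uparrow^i \tau)$ and $\CI_a(\sigma) * \CI_b(\tau) = \CI_a(\sigma)\,\CI_b(\tau) + \CI_b(\sigma \, \widehat{\curvearrowright}\, \tau)$. Dualising the recursive definition of $\Delta_2$ from \cite{BHZ, BM22} shows that $\star_2$ acts in precisely the same way on the images of these generators in $\CT_E^V$. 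The general case follows by induction on the number of edges, exploiting the derivation property of $\widehat{\triangleright}$ on the concatenation product in $U(\mathcal{V}_0)$, which mirrors the forest product in $\CT_E^V$; Theorem~\ref{main_result_trees} packages precisely this inductive bookkeeping and therefore does most of the heavy lifting.

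The main obstacle is verifying that the particular bracket $[\CI_a(\tau), X_i]_0 = \CI_{a-e_i}(\tau)$, combined with $\widehat{\triangleright}$, reproduces the $\star_2$ behaviour on all of $\CT_E^V$; this is where the subtlety of the deformation is encoded. Concretely, reducing an expression such as $X_i * \bigl(\CI_a(\sigma) * \CI_b(\tau)\bigr)$ to PBW-normal form triggers the commutator $[\CI_a(\sigma), X_i]_0 = \CI_{a-e_i}(\sigma)$, and the resulting correction term must match exactly the defect in the derivation property of $\uparrow^i$ with respect to $\widehat{\curvearrowright}$ recorded in \eqref{non_commutation}. Checking this compatibility on generating triples and invoking the universal property of the ELM extension propagates the identity to all of $U(\mathcal{V}_0)$.

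Finally, once product compatibility is established, the coproduct identification is essentially automatic: the coproduct $\Delta$ on $U(\mathcal{V}_0)$ inherited from the universal enveloping structure and the coproduct $\Delta$ on $\CH_2$ are both algebra morphisms for their respective products and make planted trees and the $X_i$'s primitive. Since $\Psi_0$ maps the primitive generators of one side bijectively onto those of the other, and both coproducts are uniquely determined by primitivity of generators and multiplicativity, they coincide under $\Psi_0$, completing the Hopf algebra isomorphism.
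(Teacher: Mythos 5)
The paper does not actually prove this statement: it is quoted as ``the main result of \cite{BK}'' and imported without argument, so there is no in-paper proof to compare yours against. Taken on its own terms, your proposal follows the natural direct route and its skeleton is sound: identify the PBW basis $\prod_{i}\CI_{a_i}(\tau_i)\,X^k$ of $U(\mathcal{V}_0)$ with the trees $X^k\prod_i\CI_{a_i}(\tau_i)$, compare $*$ and $\star_2$ on products of generators, and note that both coproducts are the unique ones that are multiplicative for the respective ``concatenation'' products and make $\mathcal{V}$ primitive. Your generator computations are consistent with the structures in the paper: $X_i * \CI_a(\tau)-\CI_a(\tau)*X_i = \CI_a(\uparrow^i\tau)-\CI_{a-e_i}(\tau)$ does reproduce the displayed commutation relation for $\star_2$, so the bracket $[\,\cdot,\cdot\,]_0$ together with $\widehat{\triangleright}$ encodes exactly the defect \eqref{non_commutation}.

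Two places are thinner than they should be. First, Theorem~\ref{main_result_trees} does not do ``most of the heavy lifting'': it only identifies $(U(\mathcal{V}_0),*,\Delta)$ with the enveloping algebra of the modified bracket $[[\cdot,\cdot]]$ and says nothing about $\star_2$; the substantive content of the present statement is precisely the comparison of the ELM product $*$ with the dually defined $\star_2$ of \cite{BHZ,BM22}, which Theorem~\ref{main_result_trees} cannot supply. Second, agreement of two associative products on pairs of generators does not by itself give agreement on all elements; you need either that both algebras are generated by $\mathcal{V}$ as associative algebras (so that a morphism defined on generators is determined everywhere, after checking it respects the PBW relations), or an explicit induction on basis monomials matching the extension of $\widehat{\triangleright}$ to $U(\mathcal{V}_0)\otimes U(\mathcal{V}_0)$ against the explicit formula $\CI_b(\sigma\star_2\tau)=\tilde{\uparrow}^k_{N_\tau}\bigl(\prod_i\CI_{a_i}(\sigma_i)\,\widehat{\curvearrowright}\,\CI_b(\tau)\bigr)$ recalled in the paper. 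As written, that inductive step is asserted rather than performed. Neither point is a wrong turn, but both would have to be written out for a complete proof.
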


\begin{remark}
An explicit formula for the $\star_{2}$ product for $\sigma = X^{k} \prod_{i \in I} \CI_{a_{i}}(\sigma_{i})$ and $\tau \in \CT_{E}^{V}$ is given by
\begin{equs}
\CI_{b}(\sigma \star_{2} \tau) := \tilde{\uparrow}_{N_{\tau}}^{k} \( \prod_{i \in I} \CI_{a_{i}}(\sigma_{i}) \, \widehat{\curvearrowright} \, \CI_{b}(\tau) \),
\quad
\tilde{\uparrow}_{N_{\tau}}^{k} = \sum_{k = \sum_{v \in N_{\tau}}k_{v} } \uparrow_{v}^{k_{v}}
\end{equs}
\end{remark}

We shall now decompose trees of the form $ X^k \prod_i \mathcal{I}_{a_i}(\tau_i) $ with decoration $ k $ at the root. So far, we have been successful in doing this for trees with no root decoration. For these terms, we will need to utilize the underlying post-Lie structure and the fact that $X_{i}$ does not commute with any term of the form $ \mathcal{I}_{a}(\tau) $. Instead one has:
\begin{equs}
X_i \star_2 \mathcal{I}_{a}(\tau) -  \mathcal{I}_{a}(\tau) \star_2 X_i = \uparrow_{e_i} \mathcal{I}_{a}(\tau)  - \mathcal{I}_{a - e_i}(\tau)
\end{equs}
where $ \star_2 $ is the product constructed from the post-Lie product. The restriction of this product on the space spanned by planted trees coincides with $ \tilde{\star} $. What we obtain will then be an isomorphism with a space of words quotiented by the following relation:
\begin{equs}
 X_i \otimes \mathcal{I}_{a}(\tau) -  \mathcal{I}_{a}(\tau) \otimes X_i = \uparrow_{e_i} \mathcal{I}_{a}(\tau)  - \mathcal{I}_{a - e_i}(\tau)
\end{equs}
where now the trees with a single node are treated as letters. Let us explain how this works for a decorated tree of the form $ X^k \prod_{i=1}^n \CI_{a_i}(\tau_i) $ when one wants to decompose the following terms:
 \begin{equs}
 X^k \prod_{i=1}^n \CI_{a_i}(\tau_i) \star_2 \tau.
 \end{equs} 
We begin by making the following remarks that shall prove useful in what follows:
\begin{itemize}
\item By choosing a different ordering in the Poincare-Birkhoff-Witt theorem, we clearly see that the set of elements of the form
\begin{equs}
\prod_{i=1}^n \CI_{a_i}(\tau_i)X^{k}
\end{equs}
where $\textbf{F} = \CI_{a_{1}}(\tau_{1}) \cdot \cdot \cdot \CI_{a_{n}}(\tau_n) $ ranges over all forests of planted trees and $\textbf{m} \in \mathbb{N}^{d+1}$ is a basis for $U(\mathcal{V}_0)$.

\vspace{10pt}

\item The operator $\tau \mapsto 
\prod_{i=1}^n \CI_{a_i}(\tau_i)X^{k} \star_{2} \tau$ 
is equal to the operator $\tau \mapsto 
\prod_{i=1}^n \CI_{a_i}(\tau_i) \star_{2} X^{k} \star_{2} \tau$
\end{itemize}
We introduce a second derivation $ \mathcal{D}^{i} $ defined by
\begin{equs}
\mathcal{D}^{i} \tau = \sum_{e \in E_\tau}
\mathcal{D}^{i}_e \tau
\end{equs}
where $ \mathcal{D}^{i}_e $ adds $ -e_i$ to the decoration of the edge $ e $ if possible. Otherwise, it is equal to zero.

\begin{proposition} \label{conjecture_psi}
For every $ \tau \in \mathcal{P}^V_E $, one has:
\begin{equs}
\uparrow^{i} \Phi(\tau) = \Phi( \uparrow^{i} \tau ) - \Phi( \mathcal{D}^{i} \tau)
\end{equs}
\end{proposition}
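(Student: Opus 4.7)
The identity is proved by induction on the number of edges of $\tau$, combining the recursive formula \eqref{recursive_theta} for $\Theta$ with the non-derivation relation \eqref{non_commutation} governing the interaction of $\uparrow^i$ with the deformed grafting. Since $\Phi = G(\Theta) = S(\Theta)$ extends $\Theta$ multiplicatively through the Guin--Oudom functor, and since both $\uparrow^i$ and $\mathcal{D}^i$ act as derivations with respect to the forest product on $S(\mathcal{P}^V_E)$, a short Leibniz argument reduces the problem to the case of a single planted tree.

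For the base case $\tau = \mathcal{I}_a(X^k)$, the first line of \eqref{recursive_theta} yields $\Phi(\tau) = \tau$ and a direct expansion gives $\Phi(\uparrow^i \tau) - \Phi(\mathcal{D}^i \tau) = \uparrow^i \tau$, because $\mathcal{D}^i \tau = \mathcal{I}_{a-e_i}(X^k)$ (or zero) and the lower-grading term generated by the deformation when $\Theta$ is applied to $\uparrow^i \tau$ cancels against $\Theta(\mathcal{D}^i\tau)$. For the inductive step, write $\tau = \mathcal{I}_a(X^k \prod_{j=1}^n \mathcal{I}_{a_j}(\sigma_j))$ so that
\[\Theta(\tau) = \Bigl(\prod_{j=1}^n \Theta(\mathcal{I}_{a_j}(\sigma_j))\Bigr)\ \widehat{\curvearrowright}\ \mathcal{I}_a(X^k).\]
Applying $\uparrow^i$ and invoking \eqref{non_commutation} one elementary grafting at a time, each grafting contributes the two usual Leibniz-style terms plus an anomalous ``shift'' term lowering the corresponding edge decoration $a_j \mapsto a_j - e_i$. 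Using the inductive hypothesis on $\uparrow^i \Theta(\mathcal{I}_{a_j}(\sigma_j))$ for each $j$, the Leibniz pieces reassemble, via \eqref{recursive_theta}, into $\Phi(\uparrow^i \tau)$ minus the image under $\Phi$ of $\mathcal{D}^i$ acting on the edges lying strictly inside some $\mathcal{I}_{a_j}(\sigma_j)$; the shift terms from \eqref{non_commutation} then account precisely for the remaining edges of $\tau$, namely those at depth one decorated by the $a_j$'s. Combining, one recovers $\Phi(\mathcal{D}^i\tau)$ with every edge contributing once and only once.

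The chief obstacle is a bookkeeping matter: one must set up a bijection between the edges of $\tau$ and the elementary deformed graftings appearing in the recursive construction of $\Theta(\tau)$, so that the shift terms produced by iterated applications of \eqref{non_commutation} reproduce $\Phi(\mathcal{D}^i\tau)$ without repetition or omission. A secondary technical point is that $\uparrow^i$ may act on the root of $\mathcal{I}_a(X^k)$, producing a tree with nonzero root decoration; this forces one to interpret $\Theta$ on the slightly enlarged class of planted trees of the form $\mathcal{I}_a(X^{k+e_i}\prod_j \mathcal{I}_{a_j}(\sigma_j))$, but the extension is immediate since \eqref{recursive_theta} depends on the exponent $k$ only through the symbol $\mathcal{I}_a(X^k)$ appearing on the right-hand side.
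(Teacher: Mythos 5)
Your argument follows essentially the same route as the paper's proof: induct on the tree, use the fact that $\Phi$ intertwines $\curvearrowright^{b}$ with $\widehat{\curvearrowright}^{b}$ (equivalently, the recursion \eqref{recursive_theta}), and let the anomalous shift term in \eqref{non_commutation} generate the edge-lowering contributions that reassemble into $\Phi(\mathcal{D}^{i}\tau)$, with the induction hypothesis handling the edges internal to each branch. The one slip is your base case: for $\tau=\mathcal{I}_{a}(X^{k})$ the map $\Theta$ is the identity on every term of $\uparrow^{i}\tau$, so no deformation term appears that could cancel $\Phi(\mathcal{I}_{a-e_{i}}(X^{k}))$ — the identity only closes if (as the paper implicitly does) the trunk edge of the outermost planted tree is excluded from $\mathcal{D}^{i}$, its contribution being supplied instead by the parent's shift term in the inductive step, and you should adopt that convention consistently in both the base case and the induction rather than invoking a spurious cancellation.
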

\begin{proof}
We first consider a decorated tree $ \tau $ of the form 
\begin{equs}
\tau = \tau_{1} \curvearrowright^{b} \mathcal{I}_a(\tau_{2}) 
\end{equs}
then one has
\begin{equs}
\uparrow^{i} \Phi(\tau) & = 
\uparrow^{i} \left( \Phi(\tau_1) \, \widehat{\curvearrowright}^{b} \, \Phi(\mathcal{I}_a(\tau_{2}))  \right)
\\ & = ( \uparrow^{i} \Phi(\tau_1) ) \, \widehat{\curvearrowright}^{b} \, \Phi( \mathcal{I}_a( \tau_{2})) +  \Phi(\tau_1) \, \widehat{\curvearrowright}^{b} \, (\uparrow^{i}  \Phi(\mathcal{I}_a( \tau_{2}))) \\ & -  \Phi(\tau_1)  \, \widehat{\curvearrowright}^{b-e_i} \, \Phi(\mathcal{I}_a( \tau_{2})) 
\end{equs}
where we have used \eqref{non_commutation}. Then, one can apply an induction hypothesis on $ \mathcal{I}_b(\tau_1) $ and $ \mathcal{I}_a(\tau_2)  $ and one gets
\begin{equs}
 \uparrow^{i} \Phi(\mathcal{I}_b(\tau_1))   & = \Phi(\mathcal{I}_b( \uparrow^{i} \tau_1 )) - \Phi( \mathcal{I}_b(\mathcal{D}^{i} \tau_1)) \\ 
 \uparrow^{i} \Phi(\mathcal{I}_a(\tau_2))   & = \Phi(\mathcal{I}_a( \uparrow^{i} \tau_2 )) - \Phi( \mathcal{I}_a(\mathcal{D}^{i} \tau_2))
\end{equs}
Then, one observes that
\begin{equs}
\Phi( \mathcal{I}_a(\mathcal{D}^{i} \tau)) &  = \Phi(\tau_1  \curvearrowright^{b-e_i} \mathcal{I}_a(\tau_{2})) + \Phi(  \mathcal{D}^i \tau_1  \curvearrowright^{b} \mathcal{I}_a(\tau_{2})) + \Phi(  \tau_1 \curvearrowright^{b} \mathcal{I}_a(\mathcal{D}^i \tau_{2}))
\\  \Phi(\mathcal{I}_a( \uparrow^{i} \tau )) & = \Phi( \uparrow^{i} \tau_1  \curvearrowright^{b} \mathcal{I}_a(\tau_{2})) + \Phi(  \tau_1  \curvearrowright^{b} \mathcal{I}_a( \uparrow^{i} \tau_{2}))
\end{equs}
We conclude by using again the morphism property of $ \Phi $ that gives us for example:
\begin{equs}
\Phi( \uparrow^{i} \tau_1  \curvearrowright^{b} \mathcal{I}_a(\tau_{2})) = \Phi( \uparrow^{i} \CI_b(\tau_1)) \,  \widehat{\curvearrowright} \, \Phi(\mathcal{I}_a(\tau_{2}))
\end{equs}
 and the fact that $ \tau $ is generated by the family $ (\widehat{\curvearrowright}^{b})_{b} $.
\end{proof}

\begin{proposition} \label{commutattion_derivations}
One has the following commutation identities:
\begin{equs}
\Delta_{\text{\tiny{BCK}}}\uparrow^i & =  \left( \uparrow^i  \otimes \, \one  \right) \Delta_{\text{\tiny{BCK}}} + \left( \one \, \otimes \uparrow^i    \right) \Delta_{\text{\tiny{BCK}}} 
\\ \Delta_{\text{\tiny{BCK}}}\mathcal{D}^i & =  \left( \mathcal{D}^i  \otimes \one  \right) \Delta_{\text{\tiny{BCK}}} + \left( \one \otimes \mathcal{D}^i    \right) \Delta_{\text{\tiny{BCK}}}  
\end{equs}
with the convention that $ \mathcal{D}^i \one = \uparrow^i \one = 0 $. 
\end{proposition}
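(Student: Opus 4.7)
The plan is to establish each identity first for a single decorated tree via the combinatorial definition \eqref{Connes_Kreimer_1} of $\Delta_{\text{\tiny{BCK}}}$ in terms of admissible cuts, and then extend to all of $S(\mathcal{P}_E^V)$ by a derivation/multiplicativity argument. An equivalent route would be to induct on the number of edges of $\tau$ using the recursive presentation \eqref{recur_def_deltaCK}, but the cut-based argument is more transparent.

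For a tree $\tau \in \CT_E^V$, the set $\mathrm{Adm}(\tau)$ of admissible cuts depends only on the combinatorial (undecorated) shape of $\tau$; in particular it is left invariant by both $\uparrow^{e_i}_v$ (which changes a node decoration) and $\mathcal{D}^i_e$ (which changes an edge decoration). For a fixed $C \in \mathrm{Adm}(\tau)$, every node of $\tau$ lies in exactly one of the trunk $R^C(\tau)$ and the pruned forest $\tilde P^C(\tau)$, and the same dichotomy holds for edges, with the convention in \eqref{Connes_Kreimer_1} placing edges of the cut into $\tilde P^C(\tau)$. Consequently, for each $v \in N_\tau$,
\begin{equs}
R^C(\uparrow^{e_i}_v \tau) \otimes \tilde P^C(\uparrow^{e_i}_v \tau) =
\begin{cases}
\uparrow^{e_i}_v R^C(\tau) \otimes \tilde P^C(\tau), & v \in N_{R^C(\tau)},\\
R^C(\tau) \otimes \uparrow^{e_i}_v \tilde P^C(\tau), & v \in N_{\tilde P^C(\tau)}.
\end{cases}
\end{equs}
Summing first over $v \in N_\tau$ and then over $C \in \mathrm{Adm}(\tau)$ yields
\begin{equs}
\Delta_{\text{\tiny{BCK}}}(\uparrow^i \tau) = \bigl(\uparrow^i \otimes \id + \id \otimes \uparrow^i\bigr) \Delta_{\text{\tiny{BCK}}}(\tau),
\end{equs}
and the argument for $\mathcal{D}^i$ is identical with edges in place of vertices (here the convention that cut edges belong to $\tilde P^C(\tau)$ is essential to make the partition of $E_\tau$ unambiguous).

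To pass from $\CT_E^V$ to the whole of $S(\mathcal{P}_E^V)$, recall that $\Delta_{\text{\tiny{BCK}}}$ is, by construction, a morphism for the forest product, while $\uparrow^i$ and $\mathcal{D}^i$ act as derivations for the same product since they are defined as sums over all nodes (respectively edges) of the entire forest. A short verification then shows that both sides of each claimed identity are derivations from $S(\mathcal{P}_E^V)$ into $S(\mathcal{P}_E^V) \otimes S(\mathcal{P}_E^V)$ equipped with the bimodule structure coming from $\Delta_{\text{\tiny{BCK}}}$. Since they agree on the algebra generators (planted trees) and on the unit under the stated convention $\uparrow^i \one = \mathcal{D}^i \one = 0$, they must agree on all of $S(\mathcal{P}_E^V)$.

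The argument is essentially bookkeeping, and I do not anticipate a genuine obstacle. The one point worth being pedantic about is that the node/edge partition induced by an admissible cut really is a partition (with cut edges placed consistently on the pruned side), which is precisely what makes the two-term co-Leibniz expression cover all contributions exactly once with no boundary correction.
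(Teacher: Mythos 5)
Your proof is correct, but it takes a genuinely different route from the paper's. The paper disposes of this proposition in one line: $\uparrow^i$ and $\mathcal{D}^i$ are derivations for the grafting products $\curvearrowright^a$ (this is \eqref{derivation_node}), hence derivations for the Grossman--Larson product $\star$ obtained from them by the Guin--Oudom construction, and since $\Delta_{\text{\tiny{BCK}}}$ is dual to $\star$, dualizing the Leibniz identity yields exactly the stated co-Leibniz identity. You instead verify the identity directly on the combinatorial definition \eqref{Connes_Kreimer_1}: admissible cuts are insensitive to decorations, each node and each edge lands on exactly one side of a given cut, and the sum over nodes (resp.\ edges) therefore splits into the two tensor factors; you then extend from trees to forests by the standard argument that two derivations into the $\Delta_{\text{\tiny{BCK}}}$-bimodule agreeing on generators agree everywhere. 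The paper's route is shorter and reuses structure already in place, but it silently relies on the compatibility of $\uparrow^i$ and $\mathcal{D}^i$ with the symmetry-factor pairing used to identify $\star$ as the dual of $\Delta_{\text{\tiny{BCK}}}$; your route is self-contained and makes the mechanism visible, at the cost of the bookkeeping you rightly flag --- in this variant of the coproduct the cut edge travels to the pruned side as part of a planted tree with a fresh root while the decorated node at its base stays in the trunk, so the node and edge partitions you need do hold and no boundary term appears. Both arguments are sound.
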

\begin{proof}
This is just a consequence of the fact that $ \uparrow^i $ and $ \mathcal{D}^i $ are derivations for $ \curvearrowright^a $ and therefore for the Grossman-Larson product $ \star $. By going to the dual, one gets the desired identities.
\end{proof}

\begin{corollary} \label{primitive_BCK}
The set of primitives elements for $\CH_{\text{\tiny{BCK}}}$ is stable under the action of the derivations $\uparrow^{e_{i}}$ as well as the derivations $\mathcal{D^i}$.
\end{corollary}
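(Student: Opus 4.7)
The plan is to deduce the corollary directly from Proposition~\ref{commutattion_derivations} together with the convention $\uparrow^{i}\one = \mathcal{D}^{i}\one = 0$. The argument is essentially a one-line computation, so the ``proof proposal'' is really just a check that the derivation-type commutation relation interacts correctly with the defining identity of primitivity.

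First, I would fix a primitive element $p \in \CH_{\text{\tiny{BCK}}}$, meaning $\Delta_{\text{\tiny{BCK}}}(p) = \one \otimes p + p \otimes \one$. Applying the first identity of Proposition~\ref{commutattion_derivations} and using linearity, I would compute
\begin{equs}
\Delta_{\text{\tiny{BCK}}}(\uparrow^{i} p)
&= \big( \uparrow^{i} \otimes \one \big)\Delta_{\text{\tiny{BCK}}}(p) + \big( \one \otimes \uparrow^{i} \big)\Delta_{\text{\tiny{BCK}}}(p) \\
&= \big( \uparrow^{i} \otimes \one \big)\big( \one \otimes p + p \otimes \one \big) + \big( \one \otimes \uparrow^{i} \big)\big( \one \otimes p + p \otimes \one \big) \\
&= \uparrow^{i}\!\one \otimes p + \uparrow^{i}\! p \otimes \one + \one \otimes \uparrow^{i}\! p + p \otimes \uparrow^{i}\!\one.
\end{equs}
Since $\uparrow^{i}\one = 0$, the first and last terms vanish and one is left with $\Delta_{\text{\tiny{BCK}}}(\uparrow^{i}p) = \one \otimes \uparrow^{i}p + \uparrow^{i}p \otimes \one$, i.e.\ $\uparrow^{i}p$ is primitive.

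The identical argument, using the second identity in Proposition~\ref{commutattion_derivations} and the convention $\mathcal{D}^{i}\one = 0$, shows that $\mathcal{D}^{i}p$ is also primitive. There is no real obstacle here: the only subtlety is the harmless but essential point that the ``cross terms'' $\uparrow^{i}\!\one \otimes p$ and $p \otimes \uparrow^{i}\!\one$ vanish, which is exactly the purpose of the stated convention. Thus both $\uparrow^{i}$ and $\mathcal{D}^{i}$ preserve $\mathrm{Prim}(\CH_{\text{\tiny{BCK}}})$, which is the claim.
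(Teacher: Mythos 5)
Your proof is correct and follows exactly the paper's own argument: apply Proposition~\ref{commutattion_derivations} to a primitive element, expand, and use the convention $\uparrow^{i}\one = \mathcal{D}^{i}\one = 0$ to kill the cross terms. Nothing to add.
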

\begin{proof}
Let $ \tau $ a primitive elements, one has
\begin{equs}
\Delta_{\text{\tiny{BCK}}} \uparrow^i \tau =  \left( \uparrow^i  \otimes \, \one  \right) \Delta_{\text{\tiny{BCK}}} \tau + \left( \one \, \otimes \uparrow^i    \right) \Delta_{\text{\tiny{BCK}}} \tau
\end{equs}
where we have used Proposition~\ref{commutattion_derivations} . Then, from the primitiveness of $ \tau $
\begin{equs}
\Delta_{\text{\tiny{BCK}}}  \tau 
= \tau \otimes \one + \one \otimes \tau
\end{equs}
which allows us to get using the fact that $ \uparrow^i \one = 0  $:
\begin{equs}
\Delta_{\text{\tiny{BCK}}} \uparrow^i \tau = \uparrow^i \tau  \otimes \one +
\one \, \otimes \uparrow^i \tau
\end{equs}
The proof works as the same for $ \mathcal{D}^i $.
\end{proof}


\begin{proposition} \label{stable_ideal}
If $\sigma = \Psi(\tau)$ for some primitive element $\tau$ with respect to the $\Delta_{\text{\tiny{BCK}}}$ coproduct, then $\uparrow^{e_i}\sigma$  and $\mathcal{D}^i \sigma$ are also in the image of $Prim(\CH_{\text{\tiny{BCK}}})$.
\end{proposition}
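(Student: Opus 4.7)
The plan is to reduce the statement, separately for $\uparrow^{e_i}$ and $\mathcal{D}^i$, to Corollary~\ref{primitive_BCK} by exploiting suitable commutation identities between these operators and the isomorphism $\Phi$.

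For the $\uparrow^{e_i}$ part, the main ingredient is already in place: writing $\sigma = \Phi(\tau)$, Proposition~\ref{conjecture_psi} immediately gives
\begin{equs}
\uparrow^{e_i}\sigma = \Phi(\uparrow^{e_i}\tau) - \Phi(\mathcal{D}^{e_i}\tau) = \Phi(\uparrow^{e_i}\tau - \mathcal{D}^{e_i}\tau).
\end{equs}
Since $\uparrow^{e_i}\tau$ and $\mathcal{D}^{e_i}\tau$ are both primitive for $\Delta_{\text{\tiny{BCK}}}$ by Corollary~\ref{primitive_BCK}, and the primitives form a linear subspace, their difference is again primitive; hence $\uparrow^{e_i}\sigma$ lies in $\Phi(Prim(\CH_{\text{\tiny{BCK}}}))$.

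For the $\mathcal{D}^i$ part, the plan is first to prove the companion commutation identity
\begin{equs}
\mathcal{D}^i\Phi(\tau) = \Phi(\mathcal{D}^i\tau) \qquad \text{for every } \tau \in \mathcal{P}^V_E,
\end{equs}
and then to conclude by the same argument using Corollary~\ref{primitive_BCK}. This identity can be established by an induction on the number of edges running parallel to the proof of Proposition~\ref{conjecture_psi}, or, more conceptually, as follows. A short calculation using the definition \eqref{deformation_preLie} shows that on the space of planted trees $\mathcal{D}^i$ is a derivation for both $\curvearrowright$ and $\widehat{\curvearrowright}$: on bare trees $\mathcal{D}^i$ generates an extra term $\sigma \curvearrowright^{a-e_i} \tau$ (respectively $\sigma \widehat{\curvearrowright}^{a-e_i}\tau$) beyond the naive Leibniz expansion, but this extra term is exactly cancelled by the contribution of $\mathcal{D}^i$ acting on the root edge decoration $b$ of the enclosing planted tree $\mathcal{I}_b(\cdot)$, so that the planted-tree Leibniz identity holds. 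Consequently both $\mathcal{D}^i$ and $\Phi \circ \mathcal{D}^i \circ \Phi^{-1}$ are derivations of the pre-Lie algebra $(\mathcal{P}^V_E, \widehat{\curvearrowright})$. Since $\Theta$, and hence $\Phi$, fixes the generators $\mathcal{I}_a(X^k)$ of this free pre-Lie algebra, the two derivations coincide on a generating set, and uniqueness of a pre-Lie derivation determined by its values on free generators forces them to agree everywhere.

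With the identity in hand, $\mathcal{D}^i\sigma = \Phi(\mathcal{D}^i\tau)$, and $\mathcal{D}^i\tau \in Prim(\CH_{\text{\tiny{BCK}}})$ by Corollary~\ref{primitive_BCK}, so $\mathcal{D}^i\sigma$ lies in $\Phi(Prim(\CH_{\text{\tiny{BCK}}}))$. The main technical obstacle is the verification that $\mathcal{D}^i$ is a planted-tree derivation for $\widehat{\curvearrowright}$: one must carefully bookkeep the cancellation between the newly grafted edge (at decoration $a$) and the root edge (at decoration $b$). Once that Leibniz identity is in place the rest is formal, and both cases of the proposition reduce to a single application of Corollary~\ref{primitive_BCK}.
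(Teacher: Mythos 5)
Your proposal is correct, and for the $\uparrow^{e_i}$ half it is exactly the paper's argument: the paper's proof consists of the single sentence that the proposition follows from Proposition~\ref{conjecture_psi} and Corollary~\ref{primitive_BCK}, which is precisely your computation $\uparrow^{e_i}\Phi(\tau)=\Phi(\uparrow^{e_i}\tau-\mathcal{D}^{e_i}\tau)$ combined with stability of the primitives. Where you genuinely add something is the $\mathcal{D}^i$ half: Proposition~\ref{conjecture_psi} as stated only controls $\uparrow^{i}\Phi(\tau)$, and the paper never records how $\mathcal{D}^i$ interacts with $\Phi$, so its one-line proof leaves that case unaddressed. Your companion identity $\mathcal{D}^i\Phi=\Phi\,\mathcal{D}^i$ fills exactly this gap, and your argument for it is sound: the Leibniz bookkeeping does work out, since on bare trees the extra term $\sigma\curvearrowright^{a-e_i}\tau$ (resp.\ $\sigma\,\widehat{\curvearrowright}^{a-e_i}\,\tau$) produced by $\mathcal{D}^i$ hitting the grafting edge is absorbed by $\mathcal{D}^i$ hitting the root edge of $\mathcal{I}_a(\sigma)$ once one passes to planted trees, so $\mathcal{D}^i$ is a derivation of both $(\mathcal{P}^V_E,\curvearrowright)$ and $(\mathcal{P}^V_E,\widehat{\curvearrowright})$; since $\Theta$ fixes the generators $\mathcal{I}_a(X^k)$ and $\mathcal{D}^i$ preserves that generating set, the two derivations $\mathcal{D}^i$ and $\Theta^{-1}\mathcal{D}^i\Theta$ of the free pre-Lie algebra agree on generators and hence everywhere. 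This is consistent with (and slightly sharpens) the paper's own use of the derivation property of $\mathcal{D}^i$ in the proof of Proposition~\ref{commutattion_derivations}. In short: same strategy as the paper, but your write-up supplies a lemma the paper tacitly assumes.
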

\begin{proof}
This is a  consequence of Proposition~\ref{conjecture_psi} and Corollary~\ref{primitive_BCK}.
\end{proof}
We can now state and prove our main result:
 \begin{theorem} \label{main_result_paper} We equip $\mathcal{T}^V_E$ with two products: $ \tilde{\star} $ is the product dual to the deformed Butcher-Connes-Kreimer coproduct and $ \star_2 $ is the product of $\CH_{2}$.
 We let $W$ be the linear space of the words from the alphabet $ A $ whose letters are the $X_i$ and $ \Phi(\CI_{a}(\tau))$ where $ \CI_a(\tau) $ is a primitive element for $ \Delta_{\text{\tiny{BCK}}} $ and belongs to $ B $ given in \eqref{basis_B}.
 We define $ \tilde{W} $ as the quotient of $W$ by the Hopf ideal $ \CJ $ generated by the elements
\begin{equs}
  \{ X_i \otimes \Phi( \mathcal{I}_{a}(\tau) ) - \Phi( \mathcal{I}_{a}(\tau) ) \otimes X_i - \uparrow^{i} \Phi( \mathcal{I}_{a}(\tau))  - \Phi(\mathcal{I}_{a - e_i}(\tau)) \}
\end{equs}
where $ \mathcal{I}_{a}(\tau)  \in B $.
 Then, there exists a Hopf algebra isomorphism $ \Psi_{\Phi} $ (extension of $\Psi_{\Phi}$ defined in \eqref{psi_phi}) between $ \mathcal{T}_E^V $ equipped with $ \star_2 $ and the deshuffle coproduct and $ \tilde{W} $  equipped with the concatenation coproduct and the deshuffle coproduct.
 The map $ \Psi_{\Phi} $ is given by
\begin{equs}
 \Psi_{\Phi} : \prod_{i=1}^n \CI_{a_i}(\tau_i) X^k \mapsto \Psi_{\Phi}(\prod_{i=1}^n \CI_{a_i}(\tau_i)) \otimes_{j=0}^{d} \otimes_{i=1}^{k_j} X_j.
\end{equs}
\end{theorem}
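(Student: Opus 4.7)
The plan is to chain together three Hopf algebra isomorphisms. First, by Theorem~\ref{main_result_trees} together with the subsequent identification of $(U(\mathcal{V}_0),*,\Delta)$ with $\mathcal{H}_{2}$, one has $\mathcal{H}_{2} \cong U(\mathcal{V})$ as Hopf algebras, where $\mathcal{V}$ carries the Lie bracket $[[\cdot,\cdot]]$ from \eqref{bracket}. Second, decompose $\mathcal{V} = \tilde{\mathcal{V}} \oplus \langle X_0,\ldots, X_d\rangle$ as vector spaces. On $\tilde{\mathcal{V}}$ the bracket $[[\cdot,\cdot]]$ restricts to the commutator of $\widehat{\curvearrowright}$, so by Theorem~\ref{Main} and $\Psi_{\Phi}$ the envelope $U(\tilde{\mathcal{V}})$ is isomorphic as a Hopf algebra to $T(\Phi(\mathcal{B}))$. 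The remaining task is to extend this identification to the full $U(\mathcal{V})$ by incorporating the letters $\{X_i\}$ together with the cross-relations coming from the mixed bracket, which will turn out to be exactly the generators of $\mathcal{J}$.

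The key steps are as follows. First, apply the Poincar\'e--Birkhoff--Witt theorem to the ordered basis of $\mathcal{V}$ consisting of $\Phi(\mathcal{B})$ (with the Chapoton--Foissy ordering) followed by $X_0 < \cdots < X_d$; this produces a basis of $U(\mathcal{V})$ in bijection with the spanning set $\prod_{i=1}^{n}\mathcal{I}_{a_i}(\tau_i)X^{k}$ of $\mathcal{T}$, yielding the vector space map $\Psi$ of the statement. Next, compute the cross-bracket in $U(\mathcal{V})$:
\begin{equs}
[[X_i, \Phi(\mathcal{I}_a(\tau))]] = [X_i, \Phi(\mathcal{I}_a(\tau))]_{0} + X_{i}\,\widehat{\triangleright}\,\Phi(\mathcal{I}_a(\tau)) - \Phi(\mathcal{I}_a(\tau))\,\widehat{\triangleright}\,X_{i}\,,
\end{equs}
where the last term vanishes by \eqref{prodcut_post_Lie}, the second term is related to $\uparrow^{i}$ via Proposition~\ref{conjecture_psi}, and the first term yields the $\Phi(\mathcal{I}_{a-e_{i}}(\tau))$ summand via Definition~\ref{def_lie_trees}. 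This identifies the cross-bracket with the defining generator of $\mathcal{J}$. Then invoke Proposition~\ref{stable_ideal}, which guarantees that $\uparrow^{i}\Phi(\mathcal{I}_a(\tau))$ and $\Phi(\mathcal{I}_{a-e_{i}}(\tau))$ remain in the image of $\mathrm{Prim}(\mathcal{H}_{\text{\tiny{BCK}}})$, so that $\mathcal{J}$ is a well-defined Hopf ideal in $W$ (the letters $\Phi(\mathcal{I}_a(\tau))$ are honest primitives and the right-hand side of each relation is again a sum of letters). Finally, coproduct compatibility is automatic: both sides are generated by primitive letters under the unshuffle coproduct $\Delta_{\shuffle}$, and $\Psi$ sends primitives to primitives.

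The main obstacle is to justify that $\mathcal{J}$ captures exactly the cross-relations coming from $[[\cdot,\cdot]]$ and nothing more. On one hand, each defining generator of $\mathcal{J}$ must vanish in $U(\mathcal{V})$, which requires the precise interplay between $\Phi$ and the derivations $\uparrow^{i},\mathcal{D}^{i}$ governed by Propositions~\ref{conjecture_psi}--\ref{commutattion_derivations} and Corollary~\ref{primitive_BCK}; the presence of the anomalous $\mathcal{D}^{i}$ contribution coming from \eqref{non_commutation} is what makes this non-trivial, since $\uparrow^{i}$ is not a derivation for $\widehat{\curvearrowright}$. On the other hand, one must show that $\tilde{W}$ is not further collapsed, which reduces to exhibiting a PBW-style basis of $\tilde{W}$ matching that of $U(\mathcal{V})$: using the relations in $\mathcal{J}$ to push all $X_{i}$'s to the right of all letters $\Phi(\mathcal{I}_a(\tau))$ produces, modulo lower-order terms absorbed by Proposition~\ref{stable_ideal}, precisely the basis elements $\Psi_{\Phi}(\prod_{i}\mathcal{I}_{a_i}(\tau_i))\otimes X^{k}$ of the statement. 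Once both inclusions hold, $\Psi$ descends to the asserted Hopf algebra isomorphism between $\mathcal{T}$ equipped with $\star_{2}$ and $\tilde{W}$.
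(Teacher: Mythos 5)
Your proposal follows essentially the same route as the paper: the published proof likewise decomposes the forest of planted trees via the transported Chapoton--Foissy basis of Proposition~\ref{prop_iso_tilde}, appends the $X_i$'s using the PBW-type ordering of $U(\mathcal{V}_0)$ and the commutation relation recorded just before the theorem, and invokes Proposition~\ref{stable_ideal} to see that the quotient by $\CJ$ matches these relations; your write-up simply makes explicit the cross-bracket computation and the no-further-collapse argument that the paper leaves implicit. One small imprecision to fix: $\Phi(\CB)\cup\{X_i\}$ is not a linear basis of $\mathcal{V}$ (the set $\CB$ spans only certain primitives inside $\mathcal{P}_E^V$, not all of $\tilde{\mathcal{V}}$), so the separation of the $X_i$'s should come from PBW applied to $U(\mathcal{V}_0)$ with respect to a genuine basis, after which the Chapoton--Foissy freeness is applied to the planted-tree factor --- which is the order of operations in the paper's proof.
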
 
 
\begin{proof}
We first apply the isomorphism described in Proposition~\ref{prop_iso_tilde}  on $\sigma = \prod_{i=1}^n \CI_{a_i}(\tau_i)$ by writing
\begin{equs}
\prod_{i=1}^n \CI_{a_i}(\tau_i) = \sum_{R} \lambda_{R} \, \sigma_{r_{1}} \star_{2} ... \star_{2} \sigma_{r_{n}} 
\end{equs}
with $ \sigma_{r_i} \in A $. Here, we have used the fact that $ \star_2$  and $\tilde{\star}$ coincide on planted decorated trees. We then map $  \prod_{i=1}^n \CI_{a_i}(\tau_i) X^k $ as follows:
\begin{equs}
\prod_{i=1}^{n}  \CI_{a_{i}}(\tau_i) X^k \mapsto \sum_{R} \lambda_{R} \sigma_{r_{1}} \otimes ... \otimes \sigma_{r_{n}} \otimes X_{0}^{\otimes k_{0}} \otimes ... \otimes X_{d}^{\otimes k_{d}}
\end{equs}
By virtue of Proposition~\ref{stable_ideal}, this clearly gives an isomorphism onto the Hopf algebra $\tilde{W}$. Indeed, given a letter $ \Phi(\mathcal{I}_a(\tau)) $, one has that $ \uparrow^i \Phi(\mathcal{I}_a(\tau)) $ and $ \Phi(\mathcal{I}_{a - e_i}(\tau)) $ are linear combination of letters of $ W $.
\end{proof}

 \section{Applications in regularity structures}
 
 \label{section::5}
 
 In this section we restrict ourselves to the setting that is more specific to the theory of regularity structures, specifically the structures first appearing in the works \cite{reg,BHZ}. For an introduction to the theory see \cite{FrizHai,EMS,BaiHos}. This involves considering a Hopf subalgebra of the $\CH_{2}$ Hopf algebra, that consists of trees with branches of positive degree. We shall use the theorem proved in the previous section to embed this into the tensor Hopf algebra. This allows for an encoding of the iterated integrals appearing when solving the equations, in the form of words. We begin by defining the space:
\begin{equs}
T_{+} := \{ X^{k}\prod_{i=1}^{n} \CI_{a_{i}}(\tau_{i}) \ | \ \alpha( \CI_{a_{i}}(\tau_{i})) > 0, \, \tau \in T^V_E \}
\end{equs}
We also define $\CT_{+}$ to be the linear span of $T_{+}$. Here, $ \alpha $ is a degree map computing a number associated to a decorated tree. This corresponds of some kind of regularity of the stochastic integral associated to the decorated tree. It takes into account the decoration on the edges that could both encode distributional noises or convolution with kernel that provides a smoothing effect via Schauder estimates. We refrain to give a precise definition that could be found in many works \cite{reg,BHZ}.

For each subcritical singular SPDE one constructs a Hopf subalgebra $\CT^+_{R}$ of $\CT_{+}$ by attaching a generating rule $R$ to the nonlinearity $F$ of the equation. The rule induces a recursive procedure that generates the entire Hopf subalgebra $\CT_{R}^+$.  This procedure may be thought of as formal Picard iteration. The resulting Hopf subalgebra is then used to describe the regularity structure for the given equation. In the next theorem, we denote by $ \cdot $ the product on $ \CT_+ $. 

\begin{theorem}
\label{T_+ isomorphism}
The Hopf algebra  $(\CT_{+}, \star_{2}, \Delta_{\shuffle})$, which is the graded dual of $(\CT_{+}, \cdot, \Delta_{2})$, is isomorphic to a Hopf subalgebra of $T(A)/\CJ$.

\end{theorem}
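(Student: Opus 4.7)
The plan is to apply the Hopf algebra isomorphism $\Psi$ from Theorem~\ref{main_result_paper} to the subspace $\CT_+ \subset \mathcal{T}$ and verify that its image is a Hopf subalgebra of $T(A)/\CJ$. Since $\Psi$ is already an isomorphism of Hopf algebras between $(\mathcal{T}, \star_2, \Delta_{\shuffle})$ and $T(A)/\CJ$, the task reduces to (i) checking that $(\CT_+, \star_2, \Delta_{\shuffle})$ sits inside $(\mathcal{T}, \star_2, \Delta_{\shuffle})$ as a Hopf subalgebra, and (ii) describing $\Psi(\CT_+)$ explicitly.

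For (i), closure of $\CT_+$ under $\star_2$ follows from the explicit grafting formula recalled in the remark preceding the statement. Given $\sigma = X^k \prod_i \CI_{a_i}(\sigma_i)$ and $\tau = X^l \prod_j \CI_{b_j}(\tau_j)$ both in $\CT_+$, the product $\sigma \star_2 \tau$ is obtained by deformed-grafting each positive-degree planted subtree $\CI_{a_i}(\sigma_i)$ onto $\CI_b(\tau)$ via $\widehat{\curvearrowright}$ and redistributing the $X^k$ via the $\tilde{\uparrow}$ operator. Each planted subtree of the output is then either an unchanged $\CI_{b_j}(\tau_j)$, already of positive degree, or an enlargement of one of these obtained by grafting positive-degree planted pieces into it, whose degree remains positive. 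Stability under $\Delta_{\shuffle}$ follows by duality: $\CT_+$ is closed under the forest product $\cdot$ by the very definition of $T_+$, and since $\Delta_{\shuffle}$ is the graded dual of $\cdot$, the coproduct of any element of $\CT_+$ lies in $\CT_+ \otimes \CT_+$.

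For (ii), $\Psi$ sends any element of $\CT_+$ into the subalgebra of $T(A)/\CJ$ generated by the letters $X_i$ together with the primitive basis elements $\Phi(\CI_a(\tau))$ for which $\alpha(\CI_a(\tau)) > 0$. The chief technical obstacle lies in arranging that the Chapoton--Foissy basis $\CB$ of Theorem~\ref{Foissy_Chapoton_basis} can be chosen to be compatible with the grading induced by $\alpha$, so that the unique decomposition furnished by Proposition~\ref{prop_iso_tilde} of any $\prod_i \CI_{a_i}(\tau_i) \in \CT_+$ involves only positive-degree primitive basis elements. This should be achievable stratum by stratum, since both $\Delta_{\text{\tiny{BCK}}}$ and the pre-Lie grafting $\curvearrowright$ respect the $\alpha$-grading, hence so do the Guin--Oudom product and the isomorphism $\Theta$. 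Combined with the stability of the generators of $\CJ$ under $\uparrow^i$ and the edge-shift $a \mapsto a - e_i$ (which follows from Propositions~\ref{conjecture_psi} and~\ref{stable_ideal}), this delivers the desired embedding of $(\CT_+, \star_2, \Delta_{\shuffle})$ as a Hopf subalgebra of $T(A)/\CJ$.
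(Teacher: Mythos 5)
Your proposal is correct and follows essentially the same route as the paper, whose entire proof is to restrict the isomorphism of Theorem~\ref{main_result_paper} to $\CT_{+}$; you simply make explicit the (implicitly assumed) fact that $\CT_{+}$ is closed under $\star_2$ and $\Delta_{\shuffle}$. Your point (ii) about choosing the Chapoton--Foissy basis compatibly with the $\alpha$-grading is not actually needed for the statement as given, since the image of a Hopf subalgebra under a Hopf algebra isomorphism is automatically a Hopf subalgebra; it would only be required for the stronger claim that $\Psi(\CT_{+})$ is the sub-word-algebra on the positive-degree letters.
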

 
\begin{proof}

By Theorem~\ref{main_result_paper}, we have an isomorphism $\Phi: \CH_{2} \rightarrow T(A)/\CJ$. By simply restricting $\Phi$ to $\CT_{+}$ we obtain a Hopf algebra ismorphism of $\CT_{+}$ onto its image.
\end{proof}
Let explain how this algebraic result allows to interpret regularity structures as some kind of geometric rough paths.
 Solutions $ u $ of local subcritical singular stochastic partial equations (SPDEs) are locally described by
\begin{equs}
u(y) -  u(x) = \sum_{\tau \in \mathcal{T}_R} u_{\tau}(x)(\Pi_x \tau)(y), \quad (\Pi_x \tau)(y) \lesssim |y-x|^{\alpha(\tau)}
\end{equs}
where $ x,y \in \mathbb{R}^{d+1} $, $ \mathcal{T}_R $ are the decorated planted trees generated by the rule $R$, $ (\Pi_x \tau)(y) $ are stochastic iterated integrals recentered around the point $ x $ such that one has a behaviour close to $ x $ according to the degree of the given decorated tree. The $ u_{\tau}(x) $ are some kind of derivatives.    Then, the theory of regularity structures provides a reexpansion map $ \Gamma_{xy} $ that allows us to move the recentering:
\begin{equs}
\Pi_y = \Pi_x \Gamma_{xy}.
\end{equs}
The collection of these two maps $ (\Pi_x,\Gamma_{xy}) $ is what is referred to as a model \cite[Def. 3.1]{reg}. One important algebraic construction is to represent $ \Gamma_{xy} $ via a character $ \gamma_{xy} : \CT_+  \rightarrow \mathbb{R}$  multiplicative for the tree product. This description is given via a co-action $ \Delta : \CT \rightarrow \CT \otimes \CT_+ $
\begin{equs} \label{chen_RS}
\Gamma_{xy}  = \left( \id \otimes \gamma_{xy} \right) \Delta, \quad | \gamma_{xy}(\tau) | \lesssim | y-x |^{\alpha(\tau)}.
\end{equs}
The character $ \gamma_{xy} $ can be viewed as an extension of  branched rough paths to the multidimentional case as $ x, y \in \mathbb{R}^{d+1} $. Moreover, it satisfies some Chen's relation:
\begin{equs}
\gamma_{xy} \star_2 \gamma_{yz} = \gamma_{xz}
\end{equs}
 We denote the space of such maps by $ \bf{TM}^{\alpha} $ called $  \alpha$-Tree-indexed Models. Maps $ \gamma_{xy} $ defined as character on $ \Psi(\CT_+) $ are $ \alpha$-Geometric Models denoted by $ \bf{GM}^{\alpha} $. They satisfy the following properties:
\begin{equs} \label{def_GM}
\gamma_{xy} \otimes \gamma_{yz} = \gamma_{xz}, 
\quad | \gamma_{xy}(\Psi(\tau)) | \lesssim | y-x |^{\alpha(\tau)}.
 \end{equs} 
  We could have used the terminology of anisotropic rough paths but the characters are defined on a quotient of a tensor Hopf algebra and not the tensor Hopf algebra itself.  One can rephrase our main algebraic result as:
\begin{theorem} \label{chiso_bis} Let $ X \in \bf{TM}^{\alpha} $, then $ \hat{X} := \Psi(X) \in \bf{GM}^{\alpha}$.
\end{theorem}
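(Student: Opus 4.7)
The plan is to define $\hat{X}_{xy} := X_{xy} \circ \Psi^{-1}$ on $\Psi(\CT_+)$ and to verify the three defining conditions in \eqref{def_GM}: shuffle-multiplicativity, Chen's relation, and the analytical bound. The whole argument consists of transporting structure from the tree side to the word side via the isomorphism of Theorem~\ref{T_+ isomorphism}, together with a direct check of the analytical estimate.

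First I would invoke Theorem~\ref{T_+ isomorphism} to identify $(\CT_+, \star_2, \Delta_\shuffle)$ with a Hopf subalgebra of $T(A)/\CJ$ equipped with tensor product and the coproduct $\Delta_\shuffle$. Passing to the graded dual on both sides yields a simultaneous isomorphism between $(\CT_+, \cdot, \Delta_2)$ on the tree side and the same underlying subspace of $T(A)/\CJ$, now equipped with the shuffle product $\shuffle$ and the deconcatenation coproduct $\bar\Delta$ on the word side. Since $X_{xy}$ is a character of $(\CT_+, \cdot, \Delta_2)$, its pullback $\hat{X}_{xy} = X_{xy} \circ \Psi^{-1}$ will automatically be shuffle-multiplicative on $\Psi(\CT_+)$, which is exactly the character property required of a geometric model.

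Next I would transport Chen's relation \eqref{chen_RS} along the dual isomorphism: the convolution $X_{xy} \star_2 X_{yz}$, defined via $\Delta_2$, corresponds to the convolution $\hat{X}_{xy} * \hat{X}_{yz}$ defined via $\bar\Delta$, so that $X_{xy} \star_2 X_{yz} = X_{xz}$ yields $\hat{X}_{xy} * \hat{X}_{yz} = \hat{X}_{xz}$. The analytical bound is then immediate from the definition of $\hat{X}$: for any $\tau \in \CT_+$,
\begin{equs}
|\hat{X}_{xy}(\Psi(\tau))| \,=\, |X_{xy}(\tau)| \,\lesssim\, |y-x|^{\alpha(\tau)},
\end{equs}
which is precisely the bound demanded in the definition of $\mathbf{GM}^\alpha$.

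The main conceptual subtlety will lie in making the dual isomorphism precise: I need to verify that $\Psi$ simultaneously intertwines the two dually paired Hopf structures on each side, namely $(\cdot, \Delta_2)$ with $(\star_2, \Delta_\shuffle)$ on $\CT_+$, and $(\shuffle, \bar\Delta)$ with $(\otimes, \Delta_\shuffle)$ on $T(A)/\CJ$. This follows from graded duality for connected graded Hopf algebras with finite-dimensional graded components, together with the compatible self-dual pairing used throughout the paper (involving the symmetry factors $S(\tau)$ on the tree side). Once this bookkeeping is in place, the theorem is a purely functorial consequence of Theorem~\ref{T_+ isomorphism} combined with the trivial transport of the analytical estimate.
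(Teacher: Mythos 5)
Your proposal is correct and follows essentially the same route as the paper: the paper's own proof simply observes that $\langle \Psi(X)_{xy}, \Psi(\tau)\rangle = \langle X_{xy},\tau\rangle$ gives the analytical bound and that the algebraic identities (the character property and Chen's relation) are preserved under transport by the isomorphism $\Psi$ of Theorem~\ref{T_+ isomorphism}. Your write-up is a more careful elaboration of exactly this argument, spelling out the dual Hopf structures that the paper leaves implicit.
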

\begin{proof}
The analytical bounds are easily satisfied by realising that:
\begin{equs}
\langle  \Psi(X)_{xy}, \Psi(\tau) \rangle
= \langle  X_{xy}, \tau \rangle.
\end{equs}
The algebraic identities are  such as Chen's relation are preserved by the map $ \Psi $.
\end{proof}

\begin{remark} As in \cite{Br20}, one can investigate the action of the renormalisation on this construction by looking at maps $ M $ that are morphisms for the product $ \star_2 $ which are BPHZ renormalisation maps. One of the main issue is that $ \CT_+ $ may not be stable under $ M $ due to the constraint imposed on the degree being positive. Extended decorations on trees have been introduced in \cite{BHZ} in order to guarantee that $ M $ is degree preserving. This property implies that $ \CT_+ $ is invaraint under $ M $. One can easily check that $ M $ commutes with $ \Phi $ and then it is possible to find a map $ \tilde{M} $ defined on $ T(A)/\CJ $ such that it will commute with $ \Psi $:
\begin{equs}
\tilde{M} \Psi = \Psi M.
\end{equs}
This will be an equivalent of Theorem 4.7 in \cite{Br20}.
\end{remark}

\end{document}